\newcommand\NN{\mathbb{N}}
\newcommand\RR{\mathbb{R}}
\newcommand\CC{\mathscr{C}}
\newcommand\II{\mathcal{I}}
\newcommand\TT{\mathcal{T}}
\newcommand\eps{{\varepsilon}}
\newtheorem{theorem}{Theorem}[section]
\newtheorem{corollary}[theorem]{Corollary}
\newtheorem{example}[theorem]{Example}
\newtheorem{lemma}[theorem]{Lemma}
\newtheorem{remark}[theorem]{Remark}
\newtheorem*{remarks*}{Remarks}
\newtheorem{proposition}[theorem]{Proposition}
\numberwithin{equation}{section}
\begin{document}

\title[The H\"older spectrum of a self-affine function]{The pointwise H\"older spectrum of general self-affine functions on an interval}

\author{Pieter Allaart}
\address{Mathematics Department, University of North Texas, 1155 Union Cir \#311430, Denton, TX 76203-5017, U.S.A.}
\email{allaart@unt.edu}

\subjclass[2010]{Primary: 26A16, 26A27, Secondary: 28A78, 26A30}

\begin{abstract}
This paper gives the pointwise H\"older (or multifractal) spectrum of continuous functions on the interval $[0,1]$ whose graph is the attractor of an iterated function system consisting of $r\geq 2$ affine maps on $\mathbb{R}^2$. These functions satisfy a functional equation of the form $\phi(a_k x+b_k)=c_k x+d_k\phi(x)+e_k$, for $k=1,2,\dots,r$ and $x\in[0,1]$. They include the Takagi function, the Riesz-Nagy singular functions, Okamoto's functions, and many other well-known examples. It is shown that the multifractal spectrum of $\phi$ is given by the multifractal formalism when $|d_k|\geq |a_k|$ for at least one $k$, but the multifractal formalism may fail otherwise, depending on the relationship between the shear parameters $c_k$ and the other parameters. In the special case when $a_k>0$ for every $k$, an exact expression is derived for the pointwise H\"older exponent at any point. These results extend recent work by the author [{\em Adv. Math.} {\bf 328} (2018), 1--39] and S. Dubuc [{\em Expo. Math.} {\bf 36} (2018), 119--142].
\end{abstract}



\keywords{Continuous nowhere differentiable function; Self-affine function; Pointwise H\"older spectrum;  Multifractal formalism; Hausdorff dimension; Divided difference}

\maketitle

\section{Introduction} \label{sec:intro}

If $f:[0,1]\to \RR$ is a continuous function, then for $\xi\in(0,1)$ and $\alpha>0$ we write $f\in \CC^\alpha(\xi)$ if there exist a constant $C$ and a polynomial $P$ of degree less than $\alpha$ such that
\begin{equation}
|f(x)-P(x)|\leq C|x-\xi|^\alpha \qquad\mbox{for all $x\in[0,1]$}.
\label{eq:pointwise-Holder-property}
\end{equation}
The {\em pointwise H\"older exponent} of $f$ at $\xi$ is the number
\[
\alpha_f(\xi):=\sup\{\alpha\geq 0: f\in \CC^\alpha(\xi)\}.
\]
H\"older exponents were used by Mandelbrot \cite{Mandelbrot} and Frisch and Parisi \cite{Frisch} to study intermittent turbulence, and were investigated later from a mathematical point of view by Jaffard \cite{Jaffard1,Jaffard2} and others, mostly using wavelet methods. In this paper we use a more direct approach to determine the pointwise H\"older spectrum of the general self-affine function $\phi:[0,1]\to\RR$, defined as follows.
Let $\{(x_k,y_k): k=0,1,\dots,r\}$ be a polygonal line such that $0=x_0<x_1<x_2<\dots<x_r=1$, and let $T_1,\dots,T_r$ be affine transformations of $\RR^2$ given by $T_k(x,y)=(a_k x+b_k,c_k x+d_k y+e_k)$, $k=1,2,\dots,r$, where $0<|a_k|<1$, $|d_k|<1$, and for each $k$, 
\begin{equation}
T_k(\{(x_0,y_0),(x_r,y_r)\})=\{(x_{k-1},y_{k-1}),(x_k,y_k)\}.
\label{eq:connectivity}
\end{equation}
Note that \eqref{eq:connectivity} implies $\sum_{k=1}^r |a_k|=1$. There is then a unique continuous function $\phi$ satisfying the functional equation
\begin{equation}
\phi(a_k x+b_k)=c_k x+d_k\phi_k(x)+e_k, \qquad x\in[0,1], \quad k=1,2,\dots,r.
\label{eq:functional-equation}
\end{equation}
(See \cite[Theorem 1]{Dubuc}.) Many famous ``pathological functions" are obtained in this way; see the examples below.
The objective of this paper is to determine the pointwise H\"older spectrum of $\phi$; that is, the function
\[
D(\alpha):=\dim_H E_\phi(\alpha), \qquad \mbox{where} \qquad E_\phi(\alpha):=\{\xi\in[0,1]: \alpha_\phi(\xi)=\alpha\}, \quad \alpha>0.
\]
The function $D(\alpha)$ has previously been determined for some special cases; for instance, Ben Slimane \cite{BenSlimane} gave a complete solution for $r=2$ and $a_1=a_2=1/2$. More recently, the author \cite{Allaart} considered functions of the form \eqref{eq:functional-equation} without shear coefficients (i.e. $c_k=0$ for each $k$) and showed that $D(\alpha)$ is given by the multifractal formalism (defined below) provided that $\alpha_\phi(\xi)$ is replaced with
\[
\tilde{\alpha}_\phi(\xi):=\sup\left\{\alpha\geq 0: \limsup_{x\to\xi}\frac{|\phi(x)-\phi(\xi)|}{|x-\xi|^\alpha}=0\right\}.
\]
This quantity is easier to analyze but says little about the regularity of $\phi$ near $\xi$ if $\phi$ has a nonvanishing finite derivative at $\xi$. The present article improves the main result of \cite{Allaart} in two important ways: (i) It extends the result to self-affine functions with shears (i.e. $c_k$ not all zero); and (ii) it provides the spectrum of the actual pointwise H\"older exponent $\alpha_\phi$ rather than $\tilde{\alpha}_\phi$. As will be seen below, the presence of shears sometimes causes the multifractal formalism to fail, as was already noted in \cite{BenSlimane}.

This paper also complements recent work of Dubuc \cite{Dubuc}, who classifies the differentiability properties of all functions $\phi$ of the form \eqref{eq:functional-equation} and shows that 
\[
\alpha_\phi(\xi)=\frac{\sum_{k=1}^r |a_k|\log|d_k|}{\sum_{k=1}^r |a_k|\log|a_k|} \qquad \mbox{almost everywhere in $(0,1)$},
\]
unless $\phi$ happens to be a polynomial. (This generalizes an earlier result of Bedford \cite{Bedford}). The results and proof techniques from \cite{Allaart} and \cite{Dubuc} play an integral role in the proofs of the main result below, but several new ideas had to be introduced to deal with the cases when the multifractal formalism fails.

\subsection{Examples}

Before stating the main result, we present some well-known examples of self-affine functions.
Observe first that for any given polygonal line $\{(x_k,y_k): k=0,1,\dots,r\}$ there are many self-affine functions of the form \eqref{eq:functional-equation}. For instance, when $a_k>0$ for every $k$, \eqref{eq:connectivity} is equivalent to the system
\begin{align}
\begin{split}
a_k&=x_k-x_{k-1},\\
b_k&=x_{k-1},\\
d_k(y_r-y_0)+c_k&=y_k-y_{k-1},\\
d_k y_0+e_k&=y_{k-1},
\end{split}
\label{eq:parameter-relations}
\end{align}
so there is one degree of freedom for each $k$. One may, for example, freely choose all the vertical (signed) contraction ratios $d_1,\dots,d_r$, and then the other parameters are fixed.

\begin{example}[Generalized Takagi functions] \label{ex:Takagi}
{\rm
For a parameter $w>0$, let
\[
\phi_w(x):=\sum_{n=0}^\infty \frac{\varphi(2^n x)}{2^{wn}},
\]
where $\varphi(x)$ denotes the distance from $x$ to the nearest integer. Then $\phi_w$ satisfies \eqref{eq:functional-equation} with $r=2$, $a_1=a_2=c_1=-c_2=1/2$, and $d_1=d_2=2^{-w}$. The case $w=1$ gives the classical Takagi function (see \cite{Takagi}), shown in the leftmost panel of Figure \ref{fig:self-affine-examples}. When $0<w<1$, $\phi_w$ is nowhere differentiable and its graph has Hausdorff dimension greater than $1$ \cite{Ledrappier}; and when $w>1$, $\phi_w$ is differentiable everywhere except at the dyadic rationals in $[0,1]$. Note that $w=2$ gives the parabola $\phi_2(x)=2x(1-x)$, which is uninteresting from the point of view of multifractal analysis. In most of the results of this paper, we will have to explicitly rule out the possibility that $\phi$ is a polynomial.
}
\end{example}

\begin{example}
{\rm
The Riesz-Nagy singular function \cite{Riesz-Nagy} with parameter $a\in(0,1)$ (middle graph in Figure \ref{fig:self-affine-examples}) satisfies \eqref{eq:functional-equation} with $a_1=a_2=1/2$, $c_1=c_2=0$, and $d_1=1-d_2=a$. The Riesz-Nagy function has also been attributed to Salem \cite{Salem} and Hellinger \cite{Hellinger}, and is sometimes called Lebesgue's singular function (e.g. \cite{Kawamura}).
}
\end{example}

\begin{example}
{\rm
Okamoto's function \cite{Okamoto} with parameter $a\in(0,1)$ (rightmost graph in Figure \ref{fig:self-affine-examples}) satisfies \eqref{eq:functional-equation} with $a_1=a_2=a_3=1/3$, $c_1=c_2=c_3=0$, $d_1=d_3=a$, and $d_2=1-2a$. For $a=1/2$ we get the Cantor function; the case $a=2/3$ was studied by Katsuura \cite{Katsuura}.
}
\end{example}

\begin{figure}
\begin{center}
\epsfig{file=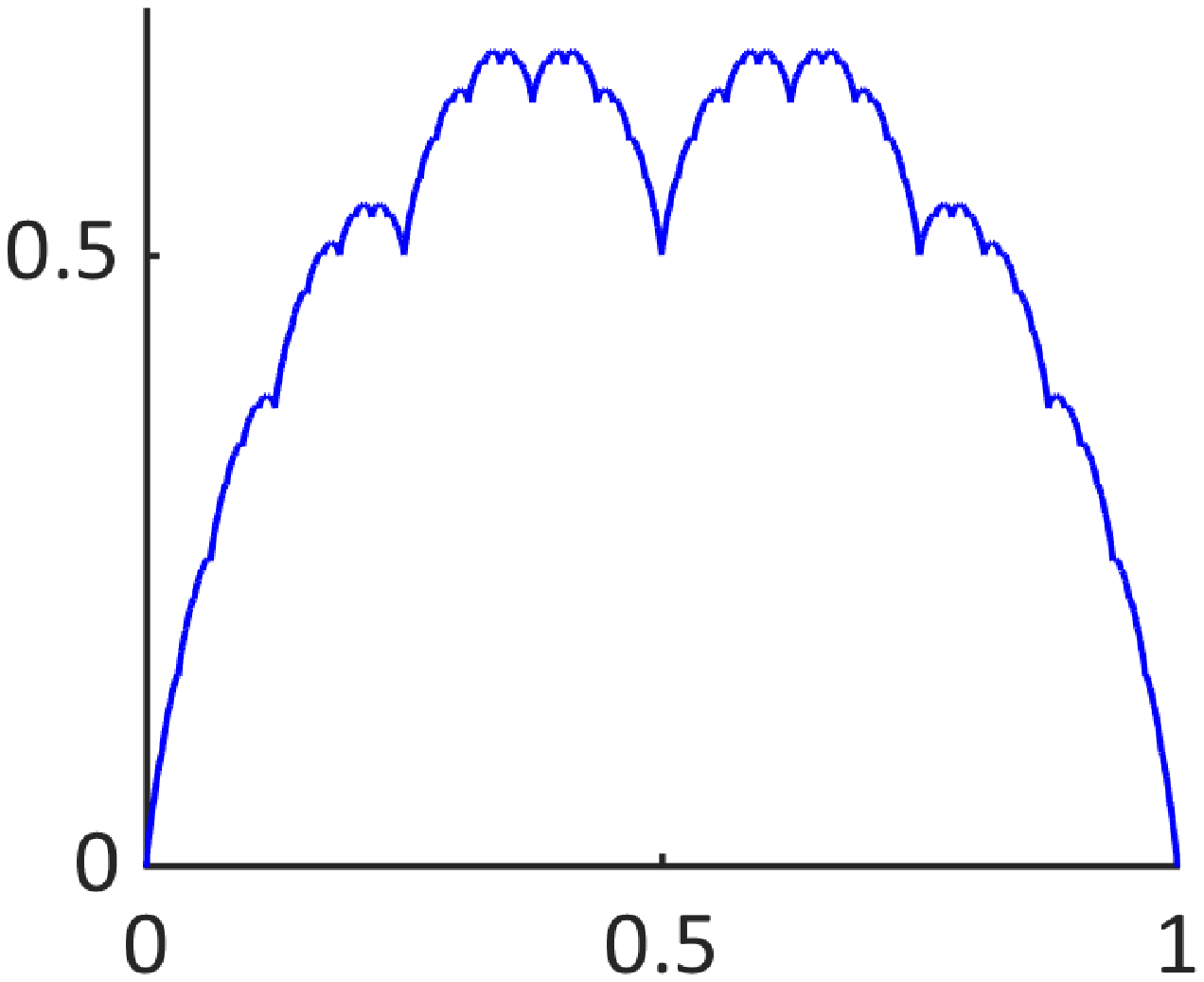, height=.2\textheight, width=.3\textwidth} \quad
\epsfig{file=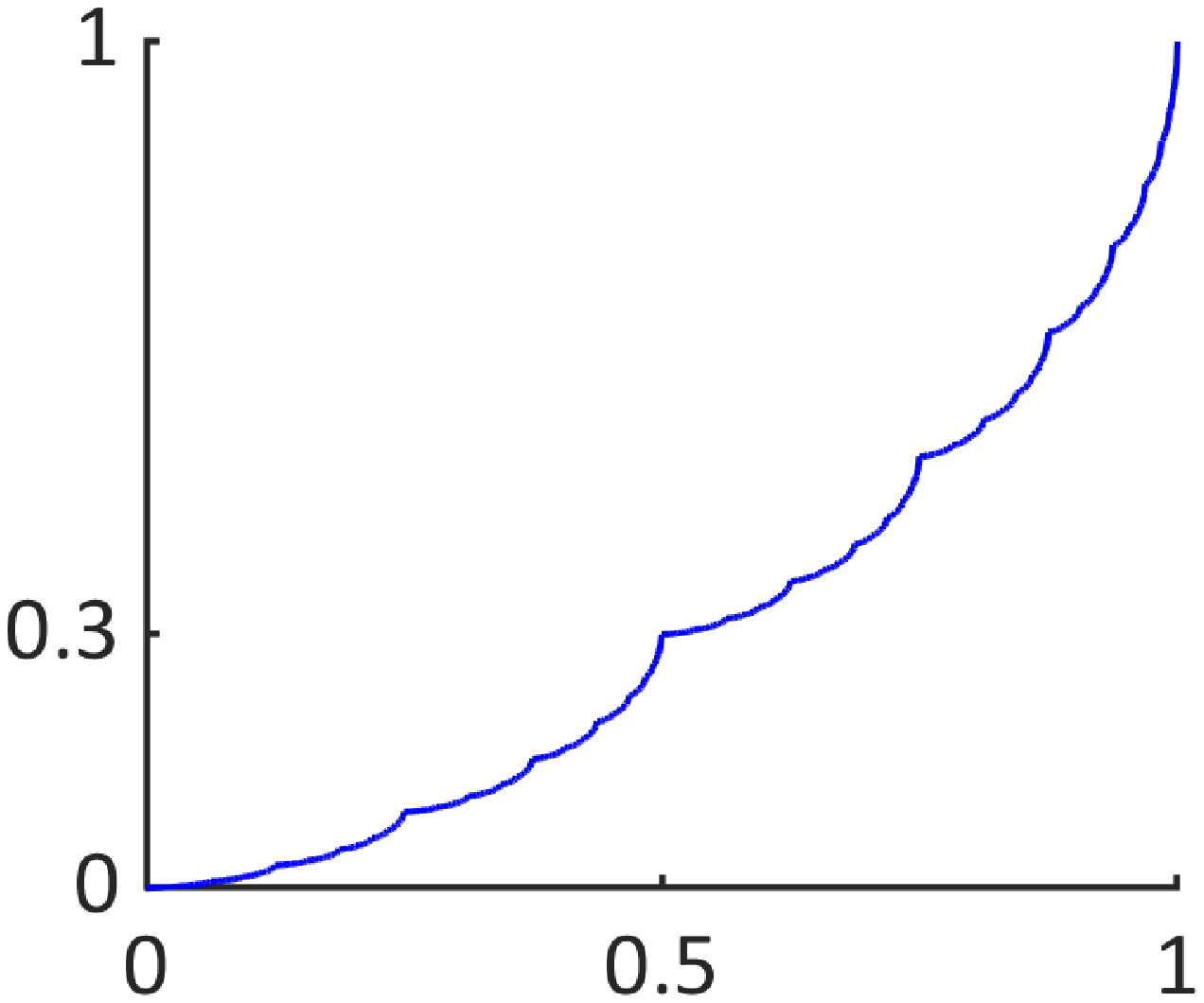, height=.2\textheight, width=.3\textwidth} \quad
\epsfig{file=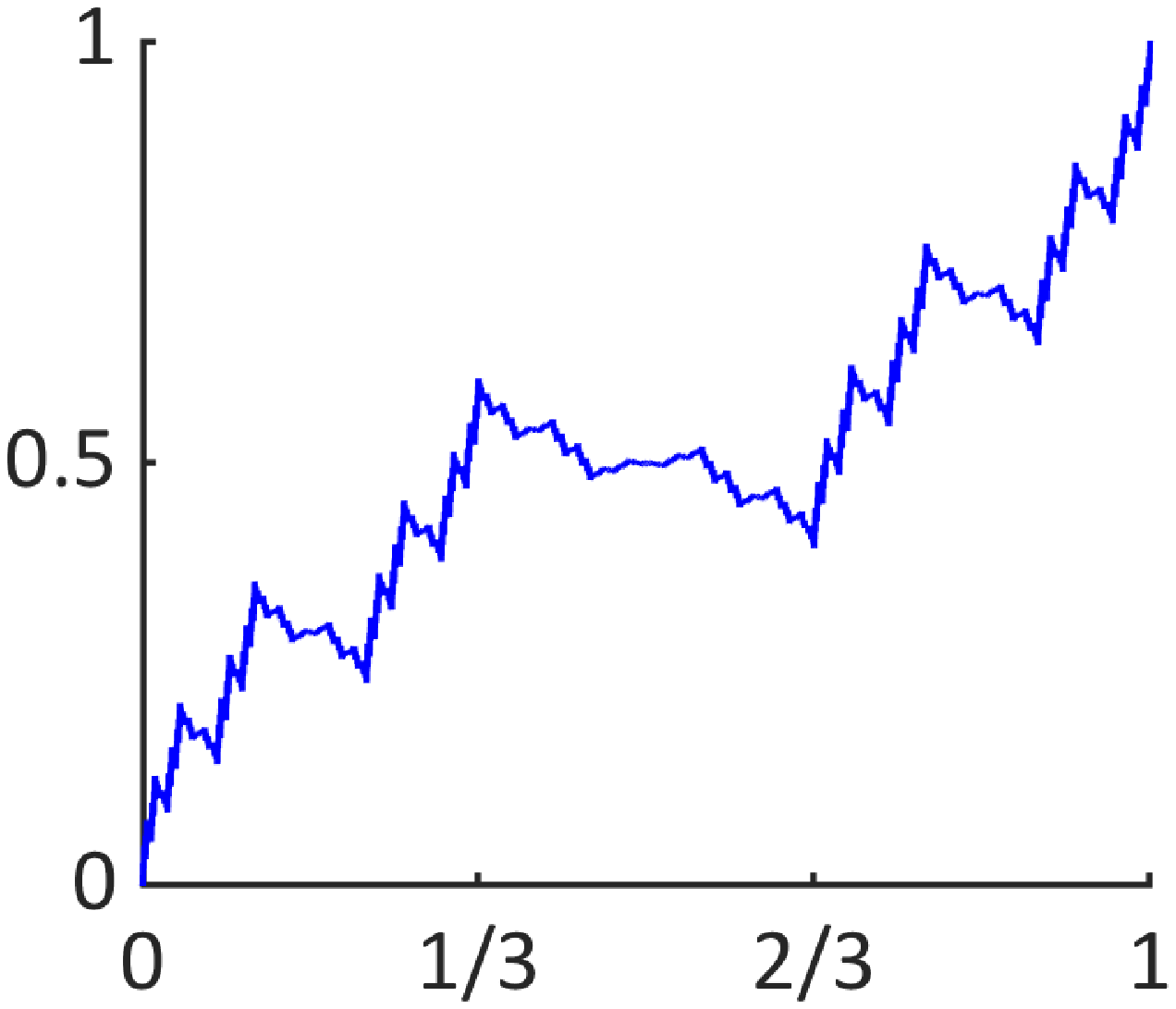, height=.2\textheight, width=.3\textwidth}
\caption{Three well known self-affine functions: The Takagi function (left), the Riesz-Nagy singular function with $a=0.3$ (center) and Okamoto's function with $a=0.6$ (right)}
\label{fig:self-affine-examples}
\end{center}
\end{figure}

Many more examples, as well as a useful Matlab program for drawing graphs of self-affine functions (which the author gratefully used to produce  Figure \ref{fig:self-affine-examples}), can be found in Dubuc's expository paper \cite{Dubuc}.


\subsection{The main result}

Define the index sets
\begin{equation*}
\mathcal{I}:=\{1,2,\dots,r\}, \qquad \mathcal{I}_0:=\{k\in\mathcal{I}:d_k=0\}, \qquad \mathcal{I}_+:=\{k\in\mathcal{I}:d_k\neq 0\}.
\end{equation*}
To avoid degenerate cases, we assume $\#\II_+\geq 2$. Set
\begin{equation*}
\rho_k:=\frac{\log|d_k|}{\log|a_k|}, \quad k\in\II_+,
\end{equation*}
and let
\begin{equation*}
\alpha_{\min}:=\min_{k\in\II_+}\rho_k, \qquad \alpha_{\max}:=\max_{k\in\II_+}\rho_k.
\end{equation*}
Furthermore, let $s_{\min}$, $s_{\max}$ and $\hat{s}$ be the nonnegative numbers satisfying
\begin{equation*}
\sum_{k: \rho_k=\alpha_{\min}}|a_k|^{s_{\min}}=1, \qquad \sum_{k: \rho_k=\alpha_{\max}}|a_k|^{s_{\max}}=1, \qquad \sum_{k\in\II_+}|a_k|^{\hat{s}}=1.
\end{equation*}
Note that $\hat{s}>0$ since $\#\II_+\geq 2$. On the other hand, $s_{\min}$ and $s_{\max}$ are typically zero unless there is a tie for the minimum (resp. maximum) of $\log|d_k|/\log|a_k|$. Put
\begin{equation*}
\hat{\alpha}:=\frac{\sum_{k\in\II_+}|a_k|^{\hat{s}}\log|d_k|}{\sum_{k\in\II_+}|a_k|^{\hat{s}}\log|a_k|}.
\end{equation*}
Note that $\alpha_{\min}\leq \hat{\alpha}\leq \alpha_{\max}$.

For each $q\in\RR$, let $\beta(q)$ be the unique real number such that
\begin{equation}
\sum_{k\in\II_+}|d_k|^q |a_k|^{\beta(q)}=1.
\label{eq:scaling-equation}
\end{equation}
It is well known from multifractal theory (e.g. \cite[Chapter 17]{Falconer}) that the function $\beta(q)$ is strictly decreasing and convex, and its Legendre transform
\begin{equation}
\beta^*(\alpha):=\inf_{q\in\RR}\{\alpha q+\beta(q)\}, \qquad \alpha>0
\label{eq:legendre-transforms}
\end{equation}
is strictly concave on the interval $[\alpha_{\min},\alpha_{\max}]$, and takes the value $-\infty$ outside this interval.
We say the {\em multifractal formalism} (MFF) holds if $D(\alpha)=\beta^*(\alpha)$ for all $\alpha$. In what follows, we shall assume that $a_k>0$ for each $k$. It is straightforward, but cumbersome, to state similar theorems for cases where some of the $a_k$ are negative.

Define the index set
\begin{align}
\begin{split}
\Lambda&:=\left\{k\in\{1,2,\dots,r-1\}: c_{k+1}a_k-c_k a_{k+1}+\frac{c_1 d_{k+1}a_k}{a_1-d_1}-\frac{c_r d_k a_{k+1}}{a_r-d_r}\neq 0\right.\\
&\left.\phantom{\frac{c_1 d_{k+1}a_k}{a_1-d_1}}\qquad\mbox{and}\ \max\{|d_k|,|d_{k+1}|\}>0\right\}.
\end{split}
\label{eq:Lambda}
\end{align}
Here we interpret $0/0$ as zero, and we declare that $k\in\Lambda$ if $a_1=d_1$ and $c_1 d_{k+1}\neq 0$, or similarly, if $a_r=d_r$ and $c_r d_k\neq 0$.

A special case of our main theorem below (with $r=2$ and $a_1=a_2=1/2$) was proved by Ben Slimane \cite{BenSlimane}.

\begin{theorem} \label{thm:main}
Assume $a_k>0$ for $k=1,2,\dots,r$.
\begin{enumerate}[(a)]\setlength{\itemsep}{2mm}
\item Suppose $|d_k|\geq a_k$ for at least one $k$ or $\Lambda=\emptyset$. Then \vspace{2mm}
\begin{enumerate}[(i)]\setlength{\itemsep}{2mm}
\item $E_\phi(\alpha)=\emptyset$ when $\alpha\not\in[\alpha_{\min},\alpha_{\max}]\cup\{\infty\}$;
\item $E_\phi(\infty)$ is empty if $\II_0=\emptyset$, and has Lebesgue measure one otherwise;
\item $D(\alpha)=\beta^*(\alpha)$ for all $\alpha\in(\alpha_{\min},\alpha_{\max})$;
\item $D(\alpha_{\min})=s_{\min}$, and $D(\alpha_{\max})=s_{\max}$;
\item The maximum value of $D(\alpha)$ is attained at $\hat{\alpha}$, and
$D(\hat{\alpha})=\hat{s}$. Moreover, if $\II_0=\emptyset$, then $E_\phi(\hat{\alpha})$ has Lebesgue measure one.
\end{enumerate}
\item Suppose $|d_k|<a_k$ for every $k$ and $\Lambda\neq\emptyset$. Let $\sigma>0$ be such that
\[
\sum_{k\in\II_+}\left(\frac{|d_k|}{a_k}\right)^\sigma=1,
\]
and put
$p_k^*:=(|d_k|/a_k)^\sigma$ for $k\in\II_+$.
Let
\[
\alpha_0:=\frac{\sum_{k\in\II_+} p_k^*\log|d_k|}{\sum_{k\in\II_+} p_k^*\log a_k}.
\]
\begin{enumerate}[(i)]\setlength{\itemsep}{2mm}
\item $E_\phi(\alpha)=\emptyset$ when $\alpha\not\in[1,\alpha_{\max}]\cup\{\infty\}$;
\item $E_\phi(\infty)$ is empty if $\II_0=\emptyset$, and has Lebesgue measure one otherwise;
\item For $\alpha\in[1,\alpha_{\max}]$ we have
\begin{equation}
D(\alpha)=\begin{cases}
\sigma(\alpha-1) & \mbox{if $1\leq \alpha\leq\alpha_0$},\\
\beta^*(\alpha) & \mbox{if $\alpha_0\leq\alpha\leq\alpha_{\max}$}.
\end{cases}
\label{eq:special-spectrum}
\end{equation}
\item The function $D(\alpha)$ is differentiable at $\alpha_0$.
\item $D(\alpha_{\max})=s_{\max}$.
\item $\alpha_0\leq\hat{\alpha}$, and statement (v) of part (a) holds.
\end{enumerate}
\end{enumerate}
\end{theorem}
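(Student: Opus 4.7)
The plan is to begin with the standard symbolic coding: each $\xi \in [0,1]$ has (up to countably many boundary ambiguities) an address $\mathbf{k} = (k_1, k_2, \ldots) \in \{1,\ldots,r\}^{\mathbb{N}}$ with $\xi$ lying in the nested cylinders $I_n(\mathbf{k}) := T_{k_1} \circ \cdots \circ T_{k_n}([0,1]\times \RR)$, whose projection on the $x$-axis has length $a_{k_1}\cdots a_{k_n}$. Iterating \eqref{eq:functional-equation}, the oscillation and divided difference of $\phi$ on $I_n(\mathbf{k})$ decomposes into a self-similar part of order $\prod_{j \leq n} |d_{k_j}|$ and a shear part that telescopes in the $c_{k_j}$. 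The pointwise H\"older exponent at $\xi$ is governed by the competition between these two pieces, an observation already implicit in the work of \cite{Allaart} and \cite{Dubuc}.

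For part (a), under either $|d_k| \geq a_k$ for some $k$, or $\Lambda = \emptyset$, the shear contribution never dominates the self-similar part, so $\alpha_\phi(\xi) = \tilde{\alpha}_\phi(\xi)$ for all $\xi$ outside the (Lebesgue-null in case $\II_0\neq\emptyset$) set of codings eventually contained in $\II_0$. Statements (i)--(v) then follow by transferring the multifractal formalism for $\tilde{\alpha}_\phi$ established in \cite{Allaart}: the Legendre transform $\beta^*$ yields the spectrum on $(\alpha_{\min},\alpha_{\max})$; the endpoints $\alpha_{\min}$ and $\alpha_{\max}$ are realized by sequences supported on the minimizing (resp. maximizing) $\rho_k$'s, giving the dimensions $s_{\min}$ and $s_{\max}$; and the maximum $\hat{\alpha}$ is attained Lebesgue-a.e.\ (when $\II_0 = \emptyset$) by Birkhoff's ergodic theorem applied to the Bernoulli measure with weights $|a_k|^{\hat{s}}$ on $\II_+$.

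For part (b), assume $|d_k| < a_k$ for all $k$ and $\Lambda \neq \emptyset$. Near each boundary $x_k$ with $k \in \Lambda$, the telescoping shear term has order $\sum_{j\leq n} a_{k_1}\cdots a_{k_j}$, which, being larger than $\prod_{j\leq n}|d_{k_j}|$, forces $\phi$ to admit a genuine affine approximation with nonzero slope; hence $\alpha_\phi(\xi) \geq 1$ everywhere, giving (i). For $\alpha \in [1, \alpha_0]$, I would construct a subset of $E_\phi(\alpha)$ of Hausdorff dimension exactly $\sigma(\alpha - 1)$ via a one-parameter family of Bernoulli measures with weights proportional to $(|d_k|/a_k)^t$ for $t \in [0, \sigma]$: a generic sequence with respect to such a measure has asymptotic ratio $\log \prod |d_{k_j}|/\log \prod a_{k_j}$ equal to $\alpha$, and the mass-distribution principle delivers the stated dimension. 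For $\alpha \in [\alpha_0, \alpha_{\max}]$, the self-similar term dominates once more and the formalism from \cite{Allaart} gives $D(\alpha) = \beta^*(\alpha)$. Differentiability at $\alpha_0$ follows by construction, since $\alpha_0$ is chosen to match the slopes $\sigma$ and $(\beta^*)'(\alpha_0)$ of the two pieces.

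The central obstacle is identifying precisely when the shear contributions add coherently rather than cancel, and the condition packaged in \eqref{eq:Lambda} is exactly the nonvanishing of the intrinsic shear at the interior breakpoint $x_k$ after subtracting the fictitious shear induced by an affine change of coordinates (as in the Dubuc reduction of \cite{Dubuc}); the terms involving $c_1/(a_1 - d_1)$ and $c_r/(a_r - d_r)$ are the artifacts of that normalization. The other delicate point is the upper bound $D(\alpha) \leq \sigma(\alpha-1)$ on the linear piece: this requires a covering argument showing that only sequences with an atypically large proportion of ``shear-active'' transitions can produce H\"older exponents below $\alpha_0$, and that the cylinder count of such sequences is governed by the pressure equation $\sum (|d_k|/a_k)^\sigma = 1$. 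This is where the new ideas alluded to in the introduction are essential, beyond the techniques of \cite{Allaart} and \cite{Dubuc}.
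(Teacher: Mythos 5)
Your overall skeleton is recognisable in the paper — symbolic coding, a divided-difference decomposition separating the shear and self-similar contributions, Bernoulli measures and a mass-distribution argument for lower bounds, a multinomial covering argument for upper bounds — but two of your load-bearing claims are wrong, and these are exactly the points where the paper has to work hardest.

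\textbf{Part (a): the identity $\alpha_\phi=\tilde\alpha_\phi$ is false.} You write that under the hypotheses of (a) the shear part never dominates, hence $\alpha_\phi(\xi)=\tilde\alpha_\phi(\xi)$ for all $\xi$ outside a null set, and then you transfer the spectrum of $\tilde\alpha_\phi$ from \cite{Allaart}. But whenever $\gamma_0(\xi)>1$ (which happens on a set of full dimension once $\alpha_{\max}>1$), $\phi$ is differentiable at $\xi$ with derivative $D(\xi)=\sum_m (c_{k_m}/d_{k_m})\prod_k(d_k/a_k)^{s_{m,k}}$, and when shears are present $D(\xi)$ is generically nonzero. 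In that case $\tilde\alpha_\phi(\xi)=1$ while $\alpha_\phi(\xi)=\gamma(\xi)>1$; the paper records explicitly at the end of Section 8 that the equality $\alpha_f=\tilde\alpha_f$ ``fails in general for the self-affine functions $\phi$ from \eqref{eq:functional-equation}.'' Moreover \cite{Allaart} treats only $c_k\equiv 0$, so there is nothing to ``transfer'' once shears are present. The actual mechanism in the paper is the exact formula $\alpha_\phi^+(\xi)=\min\{\gamma_0,\gamma_1,\gamma_2\}$ of Theorem \ref{thm:exact-Holder-exponent}, where $\gamma_1,\gamma_2$ are corrections to the naive exponent $\gamma_0$ caused by exceptionally long runs of the digit $r$; proving (a) then amounts to verifying that under the hypotheses of (a) the set $E_2(\alpha)=\{\xi:\gamma_2(\xi)=\alpha<\min\{\gamma_0,\gamma_1\}\}$ has dimension at most $\beta^*(\alpha)$ (see the end of Section \ref{sec:upper-bound}). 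Your proposal never engages with this run-length correction.

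\textbf{Part (b): pure Bernoulli measures cannot reach the linear piece.} Your proposed family of measures has weights proportional to $(|d_k|/a_k)^t$, so under each of them the maximal run length $L_n$ satisfies $L_n/n\to 0$ a.s., hence $\alpha_\phi(\xi)=\gamma_0(\xi)$ a.s. But $\gamma_0$ takes values only in $[\alpha_{\min},\alpha_{\max}]$, so for $\alpha\in[1,\alpha_{\min})$ these measures are supported on the \emph{wrong} level set; and even for $\alpha\in[\alpha_{\min},\alpha_0)$ the dimension they produce is $H(\mathbf{p})\le\beta^*(\alpha)<\sigma(\alpha-1)$. To witness $D(\alpha)=\sigma(\alpha-1)>\beta^*(\alpha)$ one must build points with a positive \emph{proportion} $\lambda$ of digits forced to be $r$ (preceded by an index in $\Lambda$), thereby driving $\gamma_2$ below $\gamma_0$; this is exactly the two-parameter family $E_{\mathbf{p},\lambda}$ in Section \ref{sec:lower-bound}, where $\mathbf{p}$ and $\lambda$ are tied together by the constraint \eqref{eq:p-tau-constraint}. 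Without this forced run-length structure the lower bound $\dim_H E_\phi(\alpha)\ge\sigma(\alpha-1)$ cannot be proved. Your upper-bound sketch for part (b) is closer to what the paper does, but the relevant quantity is the \emph{run length} $l$, not ``transition counts''; the count is $\sum_l\sum_{(m_1,\dots,m_r)}\binom{n-l}{m_1,\dots,m_r}$ over the constrained set, and it is the variable $l$ that makes the dimension climb above $\beta^*(\alpha)$.

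Smaller remarks: the identification of $\Lambda$ as ``the shear surviving after Dubuc's affine normalization'' is the right intuition. The differentiability at $\alpha_0$ does require the explicit verification that $(\beta^*)'(\alpha_0)=q^*(\alpha_0)=\sigma$ (Section \ref{sec:smooth-connection}); it is a consequence of the constructions, not automatic.
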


The multifractal spectrum of Theorem \ref{thm:main}\,({\em b}) is illustrated in Figure \ref{fig:excpetional-spectrum}. That $D(\alpha)$ does not follow the MFF for $\alpha<\alpha_0$ can be roughly explained as follows: The presence of shears in the maps $T_k$ causes a drop in H\"older exponent for points which are exceptionally well approximated by the images of $0$ and $1$ under the composite maps $S_{k_1}\circ \dots \circ S_{k_n}$, for $k_1,\dots,k_n\in\{1,2,\dots,r\}$, where $S_k(x):=a_k x+b_k$. This phenomenon occurs only for H\"older exponents greater than 1, but in the setting of Theorem \ref{thm:main}\,({\em b}) it affects a large enough set of points to increase the value of $D(\alpha)$ at the lower end of the spectrum, compared to what the MFF would predict (the dashed line in Figure \ref{fig:excpetional-spectrum}).

\begin{figure}
\begin{center}
\begin{tikzpicture}[scale=3]
\draw [->] (0,0)--(2.8,0) node[anchor=west] {$\alpha$};
\draw [->] (0,0)--(0,1.2);
\draw[domain=1.82:2.499,smooth,variable=\x] plot({\x},{(-(\x-1.5)*ln(\x-1.5)-(1-\x+1.5)*ln(1-\x+1.5))/ln(2)});
\draw[domain=1.501:1.82,smooth,dashed,variable=\x] plot({\x},{(-(\x-1.5)*ln(\x-1.5)-(1-\x+1.5)*ln(1-\x+1.5))/ln(2)});
\draw (0 cm,1pt)--(0 cm,-1pt) node[anchor=north] {$0$};
\draw (1 cm,1pt)--(1 cm,-1pt) node[anchor=north] {$1$};
\draw (1.5 cm,1pt)--(1.5 cm,-1pt) node[anchor=north] {$\alpha_{\min}$};
\draw (2.5 cm,1pt)--(2.5 cm,-1pt) node[anchor=north] {$\alpha_{\max}$};
\draw (1.82 cm,1pt)--(1.82 cm,-1pt) node[anchor=north] {$\alpha_0$};
\draw[fill] (1.82,0.904) circle [radius=0.02];
\draw (2 cm,1pt)--(2 cm,-1pt); 
\node [below] at (2,-0.02) {$\hat{\alpha}$};
\draw (1pt,1cm)--(-1pt,1cm) node[anchor=east] {$1$};
\draw (1,0)--(1.82,0.904);
\node [above] at (1.3,0.6) {$\sigma(\alpha-1)$};
\node [above] at (2.55,0.6) {$\beta^*(\alpha)$};
\end{tikzpicture}
\end{center}
\caption{Typical graph of $D(\alpha)$ when $|d_k|<a_k$ for all $k$ and $\Lambda\neq\emptyset$}
\label{fig:excpetional-spectrum}
\end{figure}
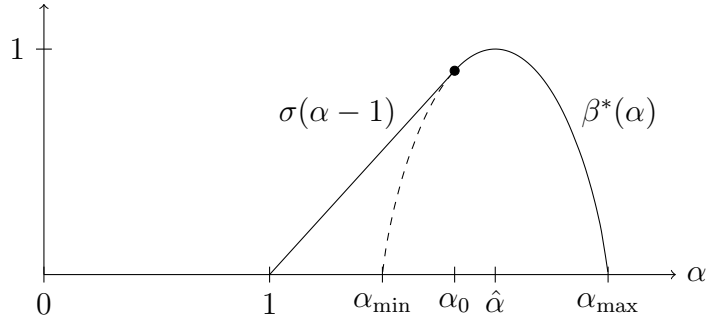

\begin{remarks*}
{\rm
(i) Note that the $c_k$'s are used to determine whether $\Lambda=\emptyset$, but play no further role in the determination of the multifractal spectrum of $\phi$.

(ii) When $r=2$, the set $\Lambda$ contains at most the number $1$. It is easy to check that in this case (assuming $d_1<a_1$ and $d_2<a_2$),
\begin{equation}
\Lambda=\emptyset \qquad \Longleftrightarrow \qquad \frac{c_1}{a_1-d_1}=\frac{c_2}{a_2-d_2} \quad\mbox{or}\quad \frac{d_1}{a_1}+\frac{d_2}{a_2}=1.
\label{eq:Lambda-for-two-maps}
\end{equation}

(iii) Theorem \ref{thm:main}\,({\em b}) includes the following special case: If $\log|d_k|/\log a_k$ is constant (for all $k\in\II_+$), then $\alpha_{\max}=\alpha_{\min}=\hat{\alpha}$ and $p_k^*=a_k^{\hat{s}}$ for each $k\in\II_+$, so that $\alpha_0=\hat{\alpha}$ as well. In this case, the graph of $D(\alpha)$ is just the straight line segment connecting the points $(1,0)$ and $(\hat{\alpha},\hat{s})$.

(iv) When $a_k<0$ for some $k$, the set $\Lambda$ has to be modified. Other than that, however, the statement of the theorem remains essentially the same.
}
\end{remarks*}

\subsection{Related literature}

There is a vast body of literature on H\"older spectra of continuous functions; we mention only those papers here which are most closely related to the present article. Jaffard \cite{Jaffard2} determines the multifractal spectrum of ``self-similar" functions $\RR^d\to\RR$, but imposes a certain smoothness condition that rules out our functions. However, Jaffard and Mandelbrot \cite{JafMan} adapt Jaffard's method to compute the multifractal spectrum of P\'olya's space-filling curve, which is of the form \eqref{eq:functional-equation} except that it maps $[0,1]$ into $\RR^2$. Ben Slimane \cite{BenSlimane} gives a complete description of the H\"older spectrum of $\phi$ in the case $r=2$ and $a_1=a_2=1/2$. (This includes the Riesz-Nagy function and the generalized Takagi functions from Example \ref{ex:Takagi}.) Self-affine functions mapping $[0,1]$ into $\RR^d$ for any $d\geq 1$ were studied by Allaart \cite{Allaart} and B\'ar\'any et al.~\cite{BKK}. These functions satisfy a functional equation $f(a_k x+b_k)=\Psi_k(f(x))$ for $k=1,2,\dots,r$ and $x\in[0,1]$, where $\Psi_k:\RR^d\to\RR^d$ are similarities in \cite{Allaart}, and general affine maps in \cite{BKK}. (Not surprisingly, the price for greater generality in \cite{BKK} is that the results are less explicit, and severe restrictions must be imposed to obtain the full pointwise H\"older spectrum.) Recently, Jaerisch and Sumi \cite{JS} have extended some of the results of \cite{Allaart} to distribution functions of general Gibbs measures.

\subsection{Organization of the paper}

The rest of this article is organized as follows. In Section \ref{sec:prelim} we define a kind of generalized entropy function and state two useful duality principles. Section \ref{sec:examples} gives two concrete examples illustrating the main theorem. Section \ref{sec:exact-Holder-exponent}, the longest and most technical section of the paper, develops an exact expression for the {\em right pointwise H\"older exponent} of $\phi$ at any point $\xi$. A crucial role in its proof is played by the method of divided differences that was introduced in this context by Dubuc \cite{Dubuc}. This expression is then used in the next two sections, which prove the lower and upper bounds in Theorem \ref{thm:main}, respectively. The differentiability of $D(\alpha)$ at $\alpha_0$ is proved in Section \ref{sec:smooth-connection}. Finally, in Section \ref{sec:higher-dim}, we show how the techniques of this paper can be used to strengthen the main result of \cite{Allaart}.

\section{Generalized entropy and duality principles} \label{sec:prelim}

For the remainder of this paper we shall assume without further mention that $a_k>0$ for every $k$.

Before illustrating the main theorem, we present $\beta^*(\alpha)$ and the function in \eqref{eq:special-spectrum} as constrained maxima of certain entropy-like functions over a simplex. These dual representations will be important for the proofs later and can also be convenient for concrete computations. We denote by $\Delta_r$ the standard simplex in $\RR^r$:
\begin{equation*}
\Delta_r:=\left\{\mathbf{p}=(p_1,\dots,p_r)\in \RR^r: p_k\geq 0\ \mbox{for each $k$ and}\ \sum_{k=1}^r p_k=1\right\},
\end{equation*}
and set
\begin{equation*}
\Delta_r^0:=\{\mathbf{p}=(p_1,\dots,p_r)\in\Delta_r: p_k=0\ \mbox{for $k\in\mathcal{I}_0$}\}.
\end{equation*}
Define the function
\begin{equation}
H(\mathbf{p}):=\frac{\sum_{k=1}^r p_k\log p_k}{\sum_{k=1}^r p_k\log |a_k|}, \qquad \mathbf{p}=(p_1,\dots,p_r)\in\Delta_r,
\label{eq:h-definition}
\end{equation}
where as usual, we set $0\log 0\equiv 0$. A proof of the following useful duality principle can be found in \cite{Allaart}.

\begin{proposition} \label{prop:duality}
For each $\alpha\in[\alpha_{\min},\alpha_{\max}]$, we have
\begin{equation*}
\beta^*(\alpha)
=\max\bigg\{H(\mathbf{p}): \mathbf{p}\in\Delta_r^0,\ \sum_{k\in\mathcal{I_+}}p_k(\log|d_k|-\alpha\log|a_k|)=0\bigg\}.
\label{eq:duality}
\end{equation*}
\end{proposition}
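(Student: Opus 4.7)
The plan is the classical Gibbs-measure approach to multifractal duality: exhibit a ``tilted'' probability vector that attains the claimed value and then dominate any feasible $\mathbf{p}$ by a relative-entropy (Gibbs inequality) argument. Since every $\mathbf{p}\in\Delta_r^0$ has $p_k=0$ for $k\in\II_0$, all sums below are effectively over $\II_+$, and the $\II_0$ indices play no role except to restrict the simplex.

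First, for each $q\in\RR$ I introduce the probability vector $p_k^*(q):=|d_k|^q|a_k|^{\beta(q)}$ on $\II_+$, which sums to $1$ by \eqref{eq:scaling-equation}. Implicit differentiation of \eqref{eq:scaling-equation} in $q$ gives
$$-\beta'(q)=\frac{\sum_{k\in\II_+}p_k^*(q)\log|d_k|}{\sum_{k\in\II_+}p_k^*(q)\log|a_k|}.$$
Strict convexity of $\beta$ makes $-\beta'$ a strictly increasing bijection from $\RR$ onto the open interval $(\alpha_{\min},\alpha_{\max})$, so for any interior $\alpha$ I pick the unique $q=q(\alpha)$ with $-\beta'(q)=\alpha$; then $\mathbf{p}^*(q)$ automatically satisfies the linear constraint in the proposition. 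Using $\log p_k^*(q)=q\log|d_k|+\beta(q)\log|a_k|$ and the constraint, a direct computation yields $H(\mathbf{p}^*(q))=\alpha q+\beta(q)$, which equals $\beta^*(\alpha)$ by \eqref{eq:legendre-transforms} since $q$ is precisely the Legendre minimizer.

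For the upper bound, I compare an arbitrary feasible $\mathbf{p}$ to $\mathbf{p}^*$ via $\sum p_k\log p_k=\sum p_k\log(p_k/p_k^*)+\sum p_k\log p_k^*$. The first sum is the nonnegative Kullback--Leibler divergence; the second collapses, via the constraint, to $\beta^*(\alpha)\sum p_k\log|a_k|$. Dividing by $\sum p_k\log|a_k|<0$ (recall $|a_k|<1$) reverses the inequality and gives $H(\mathbf{p})\leq\beta^*(\alpha)$, with equality attained at $\mathbf{p}=\mathbf{p}^*$.

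The main obstacle is the boundary cases $\alpha=\alpha_{\min}$ and $\alpha=\alpha_{\max}$, where $q\to\pm\infty$ and $\mathbf{p}^*(q)$ degenerates by concentrating on the indices attaining the extremal $\rho_k$. For $\alpha=\alpha_{\min}$, rewriting the constraint as $\sum p_k(\rho_k-\alpha_{\min})\log|a_k|=0$ and using $\rho_k\geq\alpha_{\min}$, $\log|a_k|<0$ forces $p_k=0$ whenever $\rho_k>\alpha_{\min}$, so the problem reduces to maximizing $H$ on the sub-simplex supported by $\{k:\rho_k=\alpha_{\min}\}$. A standard Lagrangian calculation (or another application of Gibbs's inequality with the weights $|a_k|^{s_{\min}}$) yields maximum value $s_{\min}$, and an asymptotic analysis of $\alpha_{\min}q+\beta(q)$ as $q\to+\infty$ identifies this with $\beta^*(\alpha_{\min})$; the case $\alpha=\alpha_{\max}$ is symmetric. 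An alternative, perhaps cleaner, route is to observe that both sides of the identity are upper semicontinuous in $\alpha$ and to pass to the limit from the interior.
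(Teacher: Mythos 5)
The paper does not give its own proof of Proposition~\ref{prop:duality}; it simply cites \cite{Allaart}, so there is no in-text argument to compare against. Your proposal is the standard Gibbs-measure / Legendre-duality proof, and it is correct. The only slip is the claim that strict convexity of $\beta$ makes $-\beta'$ strictly \emph{increasing}: strict convexity gives that $\beta'$ is strictly increasing, hence $-\beta'$ is strictly \emph{decreasing}, running from $\alpha_{\max}$ (as $q\to-\infty$) down to $\alpha_{\min}$ (as $q\to+\infty$). This is immaterial to your argument, since you only use that $-\beta'$ is a bijection of $\RR$ onto $(\alpha_{\min},\alpha_{\max})$, which remains true. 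Both the attainment $H(\mathbf{p}^*(q(\alpha)))=\alpha q+\beta(q)=\beta^*(\alpha)$ and the Kullback--Leibler domination of an arbitrary feasible $\mathbf{p}$ (remembering to flip the inequality when dividing by $\sum p_k\log|a_k|<0$) are computed correctly. The boundary cases are also sound, if sketched: the constraint at $\alpha=\alpha_{\min}$ forces support on $\{k:\rho_k=\alpha_{\min}\}$, the same Gibbs inequality with weights $|a_k|^{s_{\min}}$ gives $\max H=s_{\min}$ there, and $\lim_{q\to+\infty}(\alpha_{\min}q+\beta(q))=s_{\min}$ matches $\beta^*(\alpha_{\min})$ since $\alpha_{\min}q+\beta(q)$ is decreasing in $q$; symmetrically for $\alpha_{\max}$.
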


Proposition \ref{prop:duality} represents $\beta^*(\alpha)$ as the maximum value of $H$ over the intersection of a simplex with a hyperplane. The characterization is especially useful when $r=2$, in which case the intersection consists of a single point, which is the solution of two linear equations in the two unkwowns $p_1$ and $p_2$. 

In order to prove Theorem \ref{thm:main}\,({\em b}), we need a second, somewhat more involved duality principle. Recall that $\sigma$ is the unique number satisfying $\sum_{k\in\II_+}(|d_k|/a_k)^\sigma=1$.

\begin{lemma} \label{lem:constrained-max}
Assume $|d_k|<a_k$ for each $k$. The maximum value of
\[
G(p_1,\dots,p_r):=\frac{\sum_{k\in\II_+}p_k\log p_k}{\sum_{k\in\II_+}p_k(\log|d_k|-\log a_k)}
\]
over $\Delta_r^0$ is $\sigma$, and is attained at $(p_1^*,\dots,p_r^*)$ (where we define $p_k^*:=0$ for $k\in\II_0$).
\end{lemma}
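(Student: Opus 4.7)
My plan is to reduce the lemma to a standard Gibbs-type inequality, using the fact that $(p_k^*)_{k\in\II_+}$ is itself a probability distribution on $\II_+$.

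First I would verify that $(p_1^*,\dots,p_r^*)$ lies in $\Delta_r^0$: by the defining equation for $\sigma$ we have $\sum_{k\in\II_+} p_k^* = \sum_{k\in\II_+}(|d_k|/a_k)^\sigma = 1$, and $p_k^* = 0$ for $k \in \II_0$ by definition. Next I would compute $G(p^*)$ directly. Taking logs in the definition of $p_k^*$ gives $\log p_k^* = \sigma(\log|d_k| - \log a_k)$ for each $k \in \II_+$, so
\[
\sum_{k\in\II_+} p_k^* \log p_k^* = \sigma \sum_{k\in\II_+} p_k^* (\log|d_k| - \log a_k),
\]
and since the denominator on the right (equivalently, the denominator in the definition of $G$ evaluated at $p^*$) is strictly negative under the hypothesis $|d_k| < a_k$, dividing yields $G(p^*) = \sigma$.

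Now to show $\sigma$ is the maximum, take an arbitrary $\mathbf{p} \in \Delta_r^0$. Because $\log|d_k| - \log a_k < 0$ for every $k \in \II_+$, the denominator of $G(\mathbf{p})$ is strictly negative whenever $\mathbf{p} \neq 0$ on $\II_+$ (the case where $p_k = 0$ for all $k \in \II_+$ is excluded since then $\mathbf{p} \notin \Delta_r$). Thus the inequality $G(\mathbf{p}) \leq \sigma$ is equivalent, after clearing the negative denominator, to
\[
\sum_{k\in\II_+} p_k \log p_k - \sigma \sum_{k\in\II_+} p_k (\log|d_k| - \log a_k) \geq 0,
\]
which in turn rearranges to
\[
\sum_{k\in\II_+} p_k \log \frac{p_k}{p_k^*} \geq 0.
\]
Since $(p_k)_{k\in\II_+}$ and $(p_k^*)_{k\in\II_+}$ are both probability distributions on $\II_+$, this is precisely Gibbs' inequality (equivalently, nonnegativity of the Kullback--Leibler divergence), which follows from Jensen applied to $-\log$. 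Equality there forces $p_k = p_k^*$ for all $k \in \II_+$, confirming that $p^*$ is the unique maximizer.

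The proof is essentially routine once one spots the log-sum-inequality structure; there is no real obstacle, only the bookkeeping needed to confirm the sign of the denominator and that the index set over which the two distributions are compared really is the same ($\II_+$), so that the relative entropy comparison is valid.
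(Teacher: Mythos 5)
Your proof is correct and follows essentially the same route as the paper: both reduce the inequality $G(\mathbf{p})\leq\sigma$ to the nonnegativity of the Kullback--Leibler divergence $\sum_{k\in\II_+}p_k\log(p_k/p_k^*)$, after observing that the denominator is negative. The only cosmetic difference is that you spell out the sign check and the direct evaluation $G(p^*)=\sigma$ a bit more explicitly, while the paper states the equivalence in one line.
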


\begin{proof}
We could use the method of Lagrange multipliers, but the following argument is more direct. Let $q_k:=(|d_k|/a_k)^\sigma$, and observe that the inequality $G(p_1,\dots,p_r)\leq\sigma$ is equivalent to $\sum p_k\log p_k\geq \sum p_k\log q_k$ (the summations being over $\II_+$). But
\[
\sum p_k\log p_k-\sum p_k\log q_k=\sum p_k\log\left(\frac{p_k}{q_k}\right)\geq 0,
\]
since we recognize the last expression as the relative entropy (or Kullback-Leibler divergence) of the probability vector $(p_1,\dots,p_r)$ relative to the probability vector $(q_1,\dots,q_r)$, which is always nonnegative (an easy consequence of Jensen's inequality). Moreover, equality obtains when $p_k=q_k$ for all $k\in\II_+$.
\end{proof}

\begin{proposition} \label{prop:duality-b}
Assume $|d_k|<a_k$ for every $k$, and define
\[
h(\alpha):=\max\left\{\frac{(\alpha-1)\sum p_k\log p_k}{\sum p_k(\log|d_k|-\log a_k)}: \mathbf{p}\in\Delta_r^0,\ \sum p_k(\log|d_k|-\alpha\log a_k)\leq 0\right\},
\]
for $\alpha>1$, where the summations are over $k\in\II_+$. Then
\[
h(\alpha)=\begin{cases}
\sigma(\alpha-1) & \mbox{if $1\leq \alpha\leq\alpha_0$},\\
\beta^*(\alpha) & \mbox{if $\alpha_0\leq\alpha\leq\alpha_{\max}$}.
\end{cases}
\]
\end{proposition}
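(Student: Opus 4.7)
The plan is to reduce the two-variable optimization over $\Delta_r^0$ to a one-dimensional optimization over a scalar parameter, and then carry out a short convex-analytic study of the resulting function. Set $A(\mathbf{p}):=\sum_{k\in\II_+}p_k\log|d_k|$ and $B(\mathbf{p}):=\sum_{k\in\II_+}p_k\log a_k$, and define $\gamma(\mathbf{p}):=A(\mathbf{p})/B(\mathbf{p})$. Since $|d_k|<a_k<1$, both sums are negative and $\gamma(\mathbf{p})>1$; writing $\gamma(\mathbf{p})$ as a convex combination of the $\rho_k$ shows $\gamma(\mathbf{p})\in[\alpha_{\min},\alpha_{\max}]$. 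On the level set $\{\mathbf{p}\in\Delta_r^0:\gamma(\mathbf{p})=\gamma_0\}$ one has $A-B=(\gamma_0-1)B$, so the objective in the definition of $h(\alpha)$ simplifies to $\frac{\alpha-1}{\gamma_0-1}H(\mathbf{p})$, while the feasibility constraint becomes simply $\gamma_0\ge\alpha$. Maximizing $H$ on $\{\gamma=\gamma_0\}$ gives $\beta^*(\gamma_0)$ by Proposition \ref{prop:duality}, so
\[
h(\alpha)=(\alpha-1)\sup_{\gamma_0\in[\max(\alpha,\alpha_{\min}),\alpha_{\max}]}g(\gamma_0), \qquad g(\gamma):=\frac{\beta^*(\gamma)}{\gamma-1}.
\]

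The key geometric observation is then that $L(\alpha):=\sigma(\alpha-1)$ is the tangent line to the graph of $\beta^*$ at $\alpha_0$. Indeed, the definition of $\sigma$ rewrites as $\sum_{k\in\II_+}|d_k|^\sigma a_k^{-\sigma}=1$, so by \eqref{eq:scaling-equation} one has $\beta(\sigma)=-\sigma$; by the Legendre duality \eqref{eq:legendre-transforms} this yields $\beta^*(\alpha)\le\alpha\sigma+\beta(\sigma)=\sigma(\alpha-1)$ for every $\alpha$, with equality exactly at $\alpha=-\beta'(\sigma)$. Implicit differentiation of \eqref{eq:scaling-equation} at $q=\sigma$ identifies $-\beta'(\sigma)$ with $\sum p_k^*\log|d_k|/\sum p_k^*\log a_k=\alpha_0$, so $\beta^*(\alpha_0)=\sigma(\alpha_0-1)$ and $g(\gamma)\le\sigma$ on $[\alpha_{\min},\alpha_{\max}]$ with equality at $\gamma=\alpha_0$. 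A short computation then gives
\[
g'(\gamma)=\frac{(\beta^*)'(\gamma)(\gamma-1)-\beta^*(\gamma)}{(\gamma-1)^2};
\]
the numerator vanishes at $\alpha_0$ and its derivative equals $(\beta^*)''(\gamma)(\gamma-1)\le 0$ by concavity of $\beta^*$, so $g$ is nondecreasing on $[\alpha_{\min},\alpha_0]$ and nonincreasing on $[\alpha_0,\alpha_{\max}]$.

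The two cases of the proposition now fall out. If $1\le\alpha\le\alpha_0$, then $\alpha_0$ lies in the feasible interval $[\max(\alpha,\alpha_{\min}),\alpha_{\max}]$, so $\sup g=g(\alpha_0)=\sigma$ and $h(\alpha)=\sigma(\alpha-1)$. If $\alpha_0\le\alpha\le\alpha_{\max}$, then $g$ is nonincreasing on $[\alpha,\alpha_{\max}]$, so $\sup g=g(\alpha)$ and $h(\alpha)=\beta^*(\alpha)$. The only delicate point in the whole argument is the tangency identification: one must recognize that $\sigma$, defined abstractly in Lemma \ref{lem:constrained-max} by a pressure-type equation, is precisely the Legendre slope that produces the supporting line of $\beta^*$ at $\alpha_0$. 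Once the equalities $\beta(\sigma)=-\sigma$ and $-\beta'(\sigma)=\alpha_0$ are in hand, the rest of the proof is a routine reduction to the behaviour of a single real-variable function $g$.
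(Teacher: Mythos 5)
Your proof is correct, but it takes a genuinely different route from the paper's. The paper's proof of Proposition~\ref{prop:duality-b} is a two-line appeal to Lemma~\ref{lem:constrained-max} together with Proposition~\ref{prop:duality}: Lemma~\ref{lem:constrained-max} shows via a relative-entropy (Kullback--Leibler) inequality that the \emph{unconstrained} maximum of $G(\mathbf{p})=\sum p_k\log p_k/\sum p_k(\log|d_k|-\log a_k)$ over $\Delta_r^0$ is $\sigma$, attained at $\mathbf{p}^*$; since $\mathbf{p}^*$ satisfies the inequality constraint exactly when $\alpha\le\alpha_0$, the case $\alpha\le\alpha_0$ follows at once, and for $\alpha>\alpha_0$ the constraint is active, reducing the objective to $H$ and invoking Proposition~\ref{prop:duality}. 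Your argument instead foliates $\Delta_r^0$ by the level sets of $\gamma(\mathbf{p})=A/B$, on each of which the objective collapses to $\frac{\alpha-1}{\gamma_0-1}H(\mathbf{p})$ and the constraint becomes $\gamma_0\ge\alpha$; this reduces the whole problem to maximizing a single real function $g(\gamma)=\beta^*(\gamma)/(\gamma-1)$, which you then analyze by showing that $\sigma(\gamma-1)$ is the supporting line of $\beta^*$ at $\alpha_0$ (via $\beta(\sigma)=-\sigma$ and $-\beta'(\sigma)=\alpha_0$) and that $g$ is unimodal with peak at $\alpha_0$. What your approach buys is a fully explicit justification that, for $\alpha>\alpha_0$, the constrained maximum actually sits on the boundary $\gamma=\alpha$ -- a step the paper leaves implicit -- at the cost of a longer argument that quietly uses the twice-differentiability and strict concavity of $\beta^*$ on the interior (justified since $\beta$ is real-analytic with $\beta''>0$). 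It is also worth noting that the two key identities you derive, $\beta(\sigma)=-\sigma$ and ${\beta^*}'(\alpha_0)=\sigma$, are precisely the content of the paper's proof that $D$ is differentiable at $\alpha_0$ in Section~\ref{sec:smooth-connection}; your proof front-loads that computation, whereas the paper's proof of the present proposition bypasses it entirely.
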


\begin{proof}
This follows from Proposition \ref{prop:duality} and Lemma \ref{lem:constrained-max}, by noting that $\sum p_k^*(\log|d_k|-\alpha\log a_k)<0$ if and only if $\alpha<\alpha_0$.
\end{proof}

\section{Examples} \label{sec:examples}

The first example, a generalization of Example \ref{ex:Takagi}, illustrates how to use Theorem \ref{thm:main} in combination with Propositions \ref{prop:duality} and \ref{prop:duality-b}. For more examples along these lines, see \cite{Allaart}.

\begin{example}[Skew Takagi function] \label{ex:skew-Takagi}
{\rm
Fix numbers $a\in(0,1/2)$, $h>0$ and $0<d<1$, and consider the self-affine function $\phi$ from \eqref{eq:functional-equation} with parameters $r=2$, $a_1=1-a_2=a$, $c_1=h=-c_2$, and $d_1=d_2=d$. We can view $\phi$ as a skewed version of the Takagi function from Example \ref{ex:Takagi}, with the tent map $\varphi$ replaced by the triangle with vertices $(0,0)$, $(1,0)$ and $(a,h)$; see Figure \ref{fig:skew-Takagi}. If $d\geq a$, we are in the situation of Theorem \ref{thm:main}\,({\em a}) and the H\"older spectrum of $\phi$ is given by the multifractal formalism. Here Proposition \ref{prop:duality} allows us to calculate $\beta^*(\alpha)$ very explicitly. Solving simultaneously the equations $p_1+p_2=1$ and $\sum_{k=1}^2 p_k(\log d_k-\alpha\log a_k)=0$, we obtain
\[
p_1=p:=\frac{\frac{\log d}{\alpha}-\log(1-a)}{\log a-\log(1-a)}, \qquad p_2=1-p,
\]
and then
\begin{equation}
\beta^*(\alpha)=H(p_1,p_2)=\frac{\alpha}{\log d}\left[p\log p+(1-p)\log(1-p)\right].
\label{eq:skew-Takagi-spectrum}
\end{equation}

Now assume that $d<a$ instead. We must then determine the set $\Lambda$. Using the characterization \eqref{eq:Lambda-for-two-maps}, we see that $\Lambda=\emptyset$ if and only if $d=a(1-a)$. Assuming this is not the case, we are in the situation of Theorem \ref{thm:main}\,({\em b}). With $\sigma$ the unique solution of $(d/a)^\sigma+(d/(1-a))^\sigma=1$, we can calculate 
\[
\alpha_0=\frac{d^{-\sigma}\log d}{a^{-\sigma}\log a+(1-a)^{-\sigma}\log(1-a)},
\]
and then $D(\alpha)$ is as given in \eqref{eq:special-spectrum}, with $\beta^*(\alpha)$ again given by \eqref{eq:skew-Takagi-spectrum}.
}
\end{example}

\begin{figure}
\begin{center}
\ \ \quad \epsfig{file=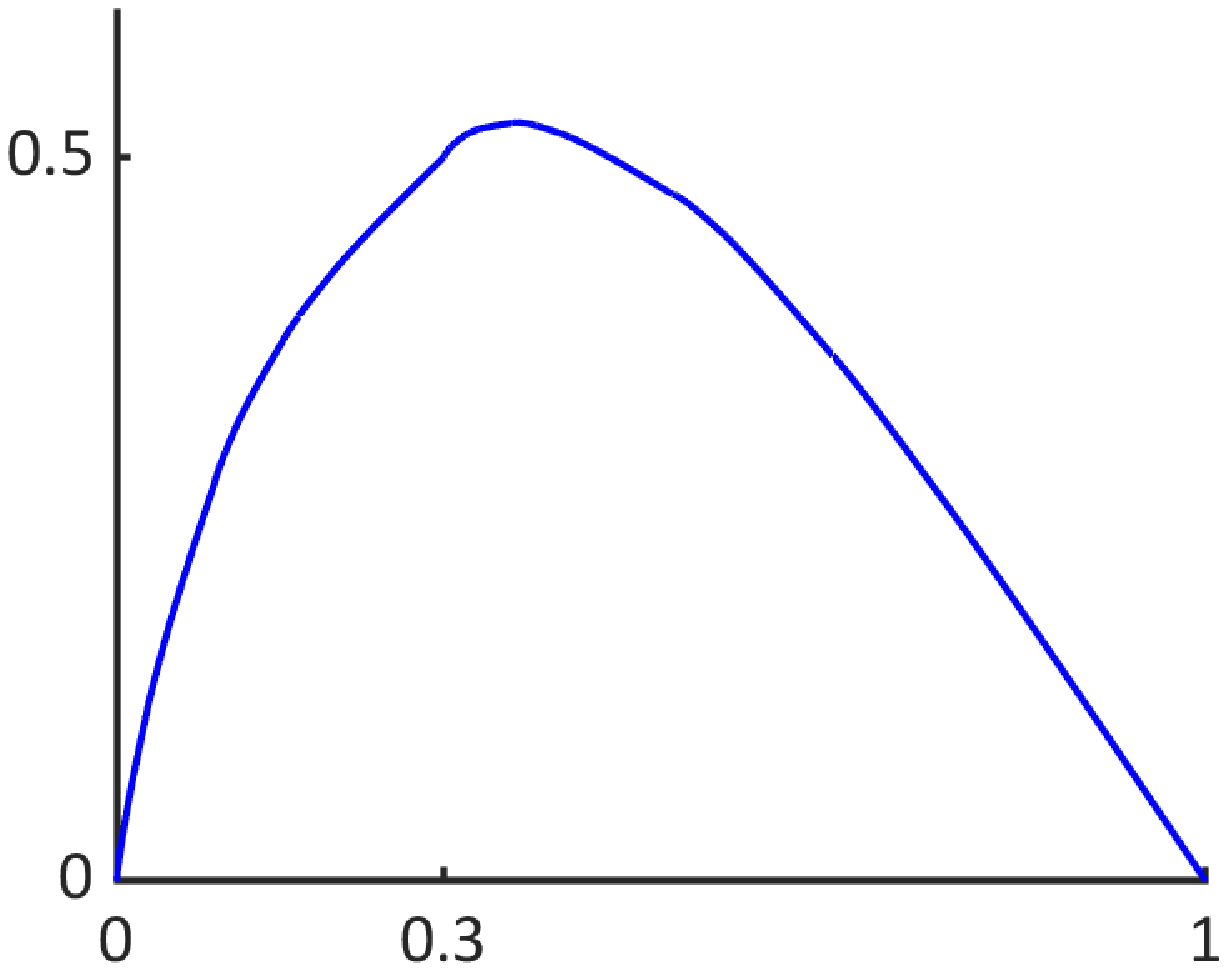, height=.25\textheight, width=.4\textwidth} \qquad\quad
\epsfig{file=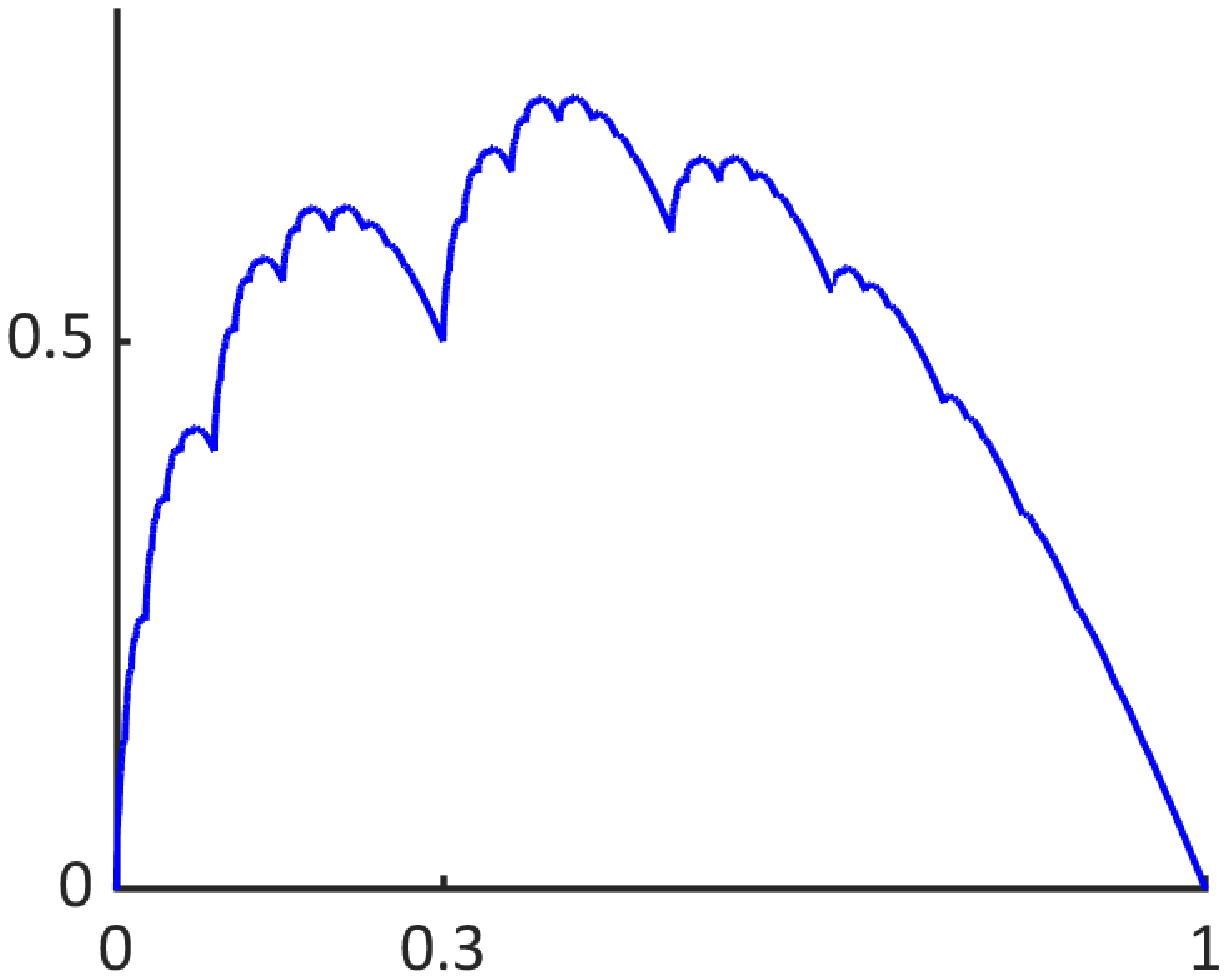, height=.25\textheight, width=.4\textwidth} \qquad\quad
\caption{The skew Takagi function from Example \ref{ex:skew-Takagi} with $a=0.3$, $h=0.5$ and $d=0.25$ (left), resp. $d=0.5$ (right)}
\label{fig:skew-Takagi}
\end{center}
\end{figure}

The next example shows that certain self-affine functions can be obtained as indefinite integrals of other self-affine functions.

\begin{example}
{\rm
Consider the case when $c_k=0$ for every $k$, so $\phi(a_k x+b_k)=d_k\phi(x)+e_k$ for $k=1,2,\dots,r$. Let $\psi(x):=\int_0^x \phi(t)dt$. A straightforward calculation shows that $\psi$ is again self-affine, satisfying
\[
\psi(a_k x+b_k)=\tilde{c}_k x+\tilde{d}_k \psi(x)+\tilde{e}_k, \qquad x\in [0,1], \quad k=1,2,\dots,r,
\]
where $\tilde{c}_k:=a_k e_k$, $\tilde{d}_k:=a_k d_k$ and $\tilde{e}_k=\int_0^{b_k}\phi(t)dt=\psi(b_k)$. It should be clear from the definition of $\psi(x)$ that $\psi$ is differentiable everywhere on $(0,1)$, and its multifractal spectrum is just that of $\phi$ shifted one unit to the right. Indeed, although $|\tilde{d}_k|<a_k$ for all $k$, it is not difficult to verify using \eqref{eq:parameter-relations} that $\Lambda=\emptyset$, so Theorem \ref{thm:main}\,({\em a}) applies equally to $\phi$ and $\psi$.

Vice versa, if $\phi$ satisfies \eqref{eq:functional-equation} with $|d_k|<a_k$ for each $k$ and $\Lambda=\emptyset$, then $\phi'$ exists everywhere and is again self-affine, with parameters $\hat{a}_k=a_k$, $\hat{b}_k=b_k$, $\hat{c}_k=0$, $\hat{d}_k=d_k/a_k$ and $\hat{e}_k=c_k/a_k$.
}
\end{example}

\section{The exact H\"older exponent of $\phi$} \label{sec:exact-Holder-exponent}

We first introduce some additional notation. Say $\phi\in \CC^\alpha_+(\xi)$ (resp.~$\phi\in \CC^\alpha_-(\xi)$) if there is a constant $C$ and a polynomial $P$ of degree less than $\alpha$ such that
\[
|\phi(x)-P(x)|\leq C|x-\xi|^\alpha \qquad\mbox{for all $x>\xi$ (resp.~$x<\xi$)}.
\]
Define the {\em right and left pointwise H\"older exponents} of $\phi$ at $\xi$, respectively, by
\[
\alpha_\phi^+(\xi):=\sup\{\alpha>0: \phi\in \CC^\alpha_+(\xi)\}, \qquad \alpha_\phi^-(\xi):=\sup\{\alpha>0: \phi\in \CC^\alpha_-(\xi)\}.
\]

We aim to give an exact expression for $\alpha_\phi^+(\xi)$ for all but countably many points; a formula for $\alpha_\phi^-(\xi)$ can be obtained similarly. Precisely, we will omit the points from the set 
$$\mathcal{T}:=\{S_{k_1}\circ\cdots\circ S_{k_n}(0),S_{k_1}\circ\cdots\circ S_{k_n}(1): n\in\NN,\ k_1,\dots,k_n\in\{1,2,\dots,r\}\},$$ 
where $S_k(x):=a_k x+b_k$. (These are the $r$-adic rational points in case $a_k=1/r$ for each $k$.) A precise statement requires the following notation. Let
\begin{equation}
K_1:=\begin{cases}
0 & \mbox{if $d_1=0$ or $d_r=0$},\\
\left(\frac{\log a_r}{\log a_1}\right)\log|d_1|-\log|d_r| & \mbox{otherwise},
\end{cases}
\label{eq:K1}
\end{equation}
and
\begin{equation}
K_2:=\begin{cases}
0 & \mbox{if $d_r=0$},\\
\log a_r-\log|d_r| & \mbox{if $d_r\neq 0$}.
\end{cases}
\label{eq:K2}
\end{equation}
Fix a point $\xi\in(0,1)\backslash\mathcal{T}$. There is a unique sequence $(k_1,k_2,\dots)$ of indices from $\{1,2,\dots,r\}$, called the {\em coding} of $\xi$, such that $\xi=\lim_{n\to\infty}S_{k_1}\circ S_{k_2}\circ\dots\circ S_{k_n}(0)$.
Set
\[
s_{n,k}(\xi):=\#\{i: 1\leq i\leq n, k_i=k\}, \qquad n\in\NN, \quad k=1,2,\dots,r,
\]
and let
\[
L_n(\xi):=\max\{j\in\NN: k_{n-j+1}=k_{n-j+2}=\dots=k_n=r\},
\]
or $L_n(\xi)=0$ in case $k_n<r$. We now define two indicators,
\[
\chi_n(\xi):=\begin{cases}
1 & \mbox{if $k_{n-L_n(\xi)}+1\in\II_+$},\\
0 & \mbox{otherwise},
\end{cases}
\]
and
\[
\zeta_n(\xi):=\begin{cases}
1 & \mbox{if $k_{n-L_n(\xi)}\in\Lambda$},\\
0 & \mbox{otherwise}.
\end{cases}
\]
Assuming $k_i\in\II_+$ for all $i$, we define
\begin{gather*}
\gamma_0=\gamma_0(\xi):=\liminf_{n\to\infty}\frac{\sum_{k=1}^r s_{n,k}(\xi)\log|d_k|}{\sum_{k=1}^r s_{n,k}(\xi)\log a_k},\\
\gamma_1=\gamma_1(\xi):=\liminf_{n\to\infty}\frac{\sum_{k=1}^r s_{n,k}(\xi)\log|d_k|+K_1\chi_n(\xi)L_n(\xi)}{\sum_{k=1}^r s_{n,k}(\xi)\log a_k},
\end{gather*}
and
\[
\gamma_2=\gamma_2(\xi):=\liminf_{n\to\infty}\frac{\sum_{k=1}^r s_{n,k}(\xi)\log|d_k|+K_2\zeta_n(\xi)L_n(\xi)}{\sum_{k=1}^r s_{n,k}(\xi)\log a_k}.
\]
Finally, set
\[
\gamma=\gamma(\xi):=\min\{\gamma_0,\gamma_1,\gamma_2\}.
\]

\begin{theorem} \label{thm:exact-Holder-exponent}
Assume $a_k>0$ for all $k$, and let $\xi\in(0,1)\backslash\mathcal{T}$ with coding $(k_1,k_2,\dots)$. Assume also that $\phi$ is not a polynomial.
Then
\[
\alpha_\phi^+(\xi)=\begin{cases}
\gamma(\xi) & \mbox{if $k_i\in\II_+$ for all $i$},\\
+\infty & \mbox{otherwise}.
\end{cases}
\]
Moreover, if $\gamma(\xi)>1$, then the right derivative of $\phi$ at $\xi$ is given by
\begin{equation}
\phi_+'(\xi)=\sum_{m=1}^\infty \frac{c_{k_m}}{d_{k_m}}\prod_{k=1}^r \left(\frac{d_k}{a_k}\right)^{s_{m,k}}.
\label{eq:right-derivative}
\end{equation}
\end{theorem}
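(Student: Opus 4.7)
The plan is to combine an explicit iteration of the functional equation \eqref{eq:functional-equation} with the divided-difference method used by Dubuc \cite{Dubuc}. Set $\psi_n := S_{k_1}\circ\cdots\circ S_{k_n}$, $I_n := \psi_n([0,1])$, and $D_n := \prod_{i=1}^n d_{k_i}$, so that $\xi$ lies in the interior of every $I_n$ (because $\xi\notin\TT$) and $|I_n| = \prod_{i=1}^n a_{k_i}$. A straightforward induction on \eqref{eq:functional-equation} yields the identity
\begin{equation}
\phi(\psi_n(x)) = A_n + B_n x + D_n\,\phi(x), \qquad x\in[0,1],
\label{eq:iteratedFE}
\end{equation}
with $A_n, B_n$ explicit in the parameters. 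If some $k_i\in\II_0$, then $D_n=0$ for all $n\ge i$, so $\phi$ is affine on a right-neighborhood of $\xi$ and $\alpha_\phi^+(\xi)=+\infty$; this settles the second clause of the theorem, so from now on I assume $k_i\in\II_+$ for all $i$.

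For the lower bound $\alpha_\phi^+(\xi)\ge\gamma(\xi)$, fix $\alpha<\gamma$, take $x>\xi$ close to $\xi$, and let $n$ be maximal with $x\in I_n$. A candidate approximating polynomial $P$ of degree $\le\lfloor\alpha\rfloor$ is built from the affine part $A_n+B_n\psi_n^{-1}(\cdot)$ of \eqref{eq:iteratedFE} together with a correction coming from the partial sums of \eqref{eq:right-derivative}. The remainder is $D_n\bigl[\phi(\psi_n^{-1}(x))-\phi(\psi_n^{-1}(\xi))\bigr]$ plus a lower-order term, and $\osc(\phi)<\infty$ gives $|D_n|\lesssim|x-\xi|^{\gamma_0-\eps}$. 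When the coding ends with a long trailing $r$-run of length $L_n$, this basic estimate is not sharp: the approximating polynomial must be reconstructed from data on $I_{n-L_n}$ rather than $I_n$, and tracking the mismatch produces factors balancing $a_r^{L_n}$ against $|d_r|^{L_n}$ (giving the exponent correction $K_2=\log a_r-\log|d_r|$) and balancing $a_r^{L_n}$ against $a_1^{L_n\log a_r/\log a_1}$ (giving $K_1$). Adding the three contributions yields $|\phi(x)-P(x)|\lesssim|x-\xi|^{\gamma-\eps}$.

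For the upper bound $\alpha_\phi^+(\xi)\le\gamma(\xi)$, I exhibit three families of test points $x_n\to\xi^+$ at which a divided difference of $\phi$ of order $\lfloor\alpha\rfloor+1$ is too large to be compatible with $\phi\in\CC_+^\alpha(\xi)$ for any $\alpha>\gamma$. The choice $x_n=\psi_n(1)$ realizes the obstruction $\gamma_0$, since the relevant divided difference is of order $|D_n|/|I_n|^{\lfloor\alpha\rfloor}$. Points inside the side-branch $\psi_{n-L_n}\circ S_{k_{n-L_n}+1}([0,1])$ realize $\gamma_1$, provided $\chi_n=1$ so this branch carries a nontrivial contribution (i.e.\ $d_{k_{n-L_n}+1}\neq 0$). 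Perturbations detecting the residual linear coefficient at the split from the $r$-run back to index $k_{n-L_n}$ realize $\gamma_2$, provided $\zeta_n=1$; the defining condition \eqref{eq:Lambda} for $\Lambda$ is exactly the non-vanishing of this coefficient after computing $B_n$ via \eqref{eq:parameter-relations}, with the correction terms $c_1d_{k+1}a_k/(a_1-d_1)$ and $c_r d_k a_{k+1}/(a_r-d_r)$ accounting for shears accumulated from the leftmost and rightmost branches of past iterations.

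Finally, when $\gamma>1$, the formula \eqref{eq:right-derivative} follows by telescoping the slopes $B_n/|I_n|$ of the best affine approximations: a direct computation from the recursion for $B_n$, combined with \eqref{eq:parameter-relations}, gives $B_n/|I_n|-B_{n-1}/|I_{n-1}|=(c_{k_n}/d_{k_n})\prod_{k=1}^r(d_k/a_k)^{s_{n,k}}$, and summability is guaranteed by $\gamma>1\Longrightarrow\sum_n|D_n|/|I_n|<\infty$. The main obstacle I expect is the upper bound, specifically showing that no hybrid test perturbation (e.g.\ inside a side-branch combined with a further boundary refinement) yields a constraint not already captured by one of $\gamma_0,\gamma_1,\gamma_2$; this will require the full divided-difference recursion of Dubuc together with careful bookkeeping of how shears accumulate along the coding.
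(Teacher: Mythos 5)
Your overall strategy---iterating the functional equation to $\phi(\psi_n(x)) = A_n + B_n x + D_n\phi(x)$, dispatching the $\II_0$ case, estimating the remainder for the lower bound, invoking Dubuc's divided-difference lemma for the upper bound, and telescoping $B_n/|I_n|$ for the derivative---is essentially the paper's approach. But the proposal is a roadmap rather than a proof, and the load-bearing steps are not carried out. There is also a concrete error: you attribute $K_1$ to ``balancing $a_r^{L_n}$ against $a_1^{L_n\log a_r/\log a_1}$,'' but these two quantities are identical, so that comparison gives $K_1=0$. What actually produces $K_1$ is the change in the $d$-product when the trailing $r$-run of length $l$ in $I_n(\xi)$ is exchanged for a $1$-run of length $l+p$ (with $a_1^{l+p}\asymp a_r^l$) in the adjacent basic interval; the ratio $|d_1|^{l+p}/|d_r|^l\asymp e^{K_1 l}$ carries the correction.

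The genuine gap, which you flag yourself at the end, is that the upper bound (and the delicate part of the lower bound when $\gamma>1$) are never actually executed. For the lower bound one must control the boundary-crossing linear coefficient (the paper's $B_n$ from \eqref{eq:def-of-Bn}), which splits into subcases $|d_1|<a_1$, $|d_1|=a_1$, $|d_1|>a_1$, each further split according to whether $k_\nu\in\Lambda$ or $d_{k_\nu+1}=0$; nothing in your sketch addresses this. For the upper bound, realizing $\gamma_2$ requires proving that this $B_n$ is \emph{bounded away from zero} whenever $k_\nu\in\Lambda$---that is precisely what the nonvanishing condition in \eqref{eq:Lambda} encodes once the two geometric series inside $B_n$ are summed---and realizing $\gamma_1$ requires estimating an order-$(\lfloor\alpha\rfloor+1)$ divided difference on the adjacent interval, whose size is governed by the same $e^{K_1 l}$ factor together with a Taylor polynomial annihilated by the divided difference. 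Until these estimates are established, the claim $\alpha_\phi^+(\xi)\le\gamma(\xi)$ remains unproved; as it stands the proposal identifies the correct three test families but gives no quantitative reason why they obstruct $\CC_+^\alpha(\xi)$ for $\alpha>\gamma$.
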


Note that the denominator in the expressions for $\gamma_1$ and $\gamma_2$ is negative. Hence, exceptionally large values of $L_n(\xi)$ can reduce the pointwise H\"older exponent of $\phi$ at $\xi$ from the ``default" value $\gamma_0$ when $K_1$ or $K_2$ is positive. On the other hand, if $L_n(\xi)=o(n)$, then we simply have $\alpha_\phi^+(\xi)=\gamma_0(\xi)$.

\begin{remark} \label{rem:left-Holder-exponent}
{\rm
We can similarly give an expression for $\alpha_\phi^-(\xi)$. Rather than stating a formal theorem, we briefly indicate how to modify the one above. First, in the definitions of $K_1$, $K_2$ and $L_n$, switch the roles of the digits $1$ and $r$. The definition of $\chi_n$ should be changed to $\chi_n(\xi)=1$ if $k_{n-L_n(\xi)}-1\in\II_+$ and $0$ otherwise; and the definition of $\zeta_n$ should be changed to $\zeta_n(\xi)=1$ if $k_{n-L_n(\xi)}-1\in\Lambda$ and $0$ otherwise. Then the expression in the theorem gives $\alpha_\phi^-(\xi)$, and if this number is greater than 1, then the left derivative $\phi_-'(\xi)$ is given by the right hand side of \eqref{eq:right-derivative}.
}
\end{remark}

\begin{remark}
{\rm
It is possible, in principle, to give exact expressions for $\alpha_\phi^+(\xi)$ also in cases where $a_k<0$ for some $k$. However, such statements would be even more complicated than the ones given, for instance, in \cite[Theorem 6.1]{Allaart}. We therefore do not pursue this here.
}
\end{remark}

Before proving Theorem \ref{thm:exact-Holder-exponent}, we point out some relationships between the quantities $\gamma_0,\gamma_1$ and $\gamma_2$. 
Clearly, $\gamma_1\leq\gamma_0$ when $K_1\geq 0$, and $\gamma_2\leq\gamma_0$ when $K_2\geq 0$. More subtle are the following inequalities.

\begin{lemma} \label{lem:gammas}
Assume $\xi\not\in\mathcal{T}$ and $k_i\in\II_+$ for all $i$. Then $\gamma_2\geq\min\{\gamma_0,1\}$. Moreover, if $\gamma_0>1$ and $\limsup_{n\to\infty} L_n(\xi)/n<1$, then $\gamma_2>1$.
\end{lemma}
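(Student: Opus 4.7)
The plan is to decompose $n = m_n + L_n$ with $m_n := n - L_n$, so that the last $L_n$ digits of the coding are $r$ and (when $m_n \geq 1$) the digit $k_{m_n}$ is not $r$. Writing $A_n := \sum_{k=1}^r s_{n,k}\log a_k$ and $D_n := \sum_{k\in\II_+} s_{n,k}\log|d_k|$, both strictly negative since $k_i\in\II_+$ for all $i$, one has the telescoping identities $A_n = A_{m_n} + L_n\log a_r$ and $D_n = D_{m_n} + L_n\log|d_r|$. I would dispose of two trivial cases up front: if $d_r = 0$, then no $k_i$ can equal $r$ (forcing $L_n \equiv 0$), and if $K_2\leq 0$, then $K_2\zeta_n L_n$ is nonpositive while $A_n<0$; in either case $\gamma_2\geq\gamma_0$. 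So I may assume $d_r\neq 0$ and $K_2 = \log a_r - \log|d_r| > 0$.

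The key algebraic step is that when $\zeta_n = 1$, the identity $K_2 + \log|d_r| = \log a_r$ collapses the ratio into the symmetric form
\[
\frac{D_n + K_2 L_n}{A_n} \;=\; \frac{D_{m_n} + L_n\log a_r}{A_{m_n} + L_n\log a_r} \;=:\; R_n^*,
\]
while for $\zeta_n = 0$ the ratio equals $D_n/A_n$. I would then establish the pointwise bound $R_n^* \geq \min\{D_{m_n}/A_{m_n},\,1\}$ by rewriting $R_n^* - 1 = (|D_{m_n}| - |A_{m_n}|)/|A_n|$ and splitting into cases: if $D_{m_n}/A_{m_n}\geq 1$, the numerator is nonnegative and $R_n^*\geq 1$; if $D_{m_n}/A_{m_n}<1$, the inequality $|A_n|\geq |A_{m_n}|$ allows the factorization $R_n^* - 1 = (D_{m_n}/A_{m_n} - 1)\cdot(|A_{m_n}|/|A_n|)$, whose second factor lies in $(0,1]$.

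For the first assertion $\gamma_2 \geq \min\{\gamma_0,1\}$, I would fix $\epsilon>0$ and argue along a subsequence realizing $\gamma_2$. Either $m_n\to\infty$ along a further subsequence, in which case $D_{m_n}/A_{m_n}\geq\gamma_0-\epsilon$ eventually and hence $R_n^*\geq\min\{\gamma_0,1\}-\epsilon$; or $m_n$ stays bounded, in which case $L_n\to\infty$ while $D_{m_n}$ and $A_{m_n}$ remain bounded, so elementary asymptotics of $R_n^*$ give $R_n^*\to 1$. The case $\zeta_n=0$ is immediate from the defining liminf for $\gamma_0$.

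For the second assertion, the hypothesis $\limsup_n L_n/n < 1$ forces $m_n \geq \delta n$ for some $\delta>0$ and all large $n$, so $m_n\to\infty$ unconditionally; this eliminates the bounded-$m_n$ case that caused the ratio to merely approach $1$. For $0<\epsilon<\gamma_0-1$, the $\zeta_n=0$ case gives ratio $\geq\gamma_0-\epsilon>1$, while in the $\zeta_n=1$ case I would use the same factorization together with a uniform lower bound $|A_{m_n}|/|A_n|\geq \delta'>0$, proved from $|A_{m_n}|\geq(\min_k|\log a_k|)m_n$, $|A_n|\leq(\max_k|\log a_k|)n$, and $m_n/n\geq\delta$, to conclude $R_n^* \geq 1 + (\gamma_0-1-\epsilon)\delta' > 1$. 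The main obstacle is not any single deep step but the bookkeeping across the several cases (sign of $K_2$, boundedness of $m_n$, values of $\zeta_n$); the substantive content is the identity $K_2 + \log|d_r| = \log a_r$ that turns $R_n^*$ into a symmetric ratio whose value is sandwiched between $D_{m_n}/A_{m_n}$ and $1$.
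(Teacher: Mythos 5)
Your proof is correct and follows essentially the same route as the paper: after disposing of the trivial cases ($d_r=0$ or $K_2\leq 0$), you use the identity $K_2+\log|d_r|=\log a_r$ to rewrite the $\zeta_n=1$ ratio as the symmetric form $R_n^*=(D_{m_n}+L_n\log a_r)/(A_{m_n}+L_n\log a_r)$, observe that $R_n^*$ is trapped between $D_{m_n}/A_{m_n}$ and $1$, and for the second assertion use $\limsup L_n/n<1$ to get a uniform lower bound on $|A_{m_n}|/|A_n|$. Your explicit factorization $R_n^*-1=(D_{m_n}/A_{m_n}-1)\cdot|A_{m_n}|/|A_n|$ packages the sandwich inequality a bit more cleanly than the paper's chain of estimates, and you handle a bounded-$m_n$ case that in fact cannot occur (since $\xi\notin\mathcal{T}$ forces $m_n=n-L_n(\xi)\to\infty$), but neither point changes the substance.
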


\begin{proof}
Throughout, we suppress the dependence on $\xi$. Observe that
\begin{align}
\begin{split}
\gamma_2&\geq \liminf_{n\to\infty} \frac{\sum s_{n,k}\log|d_k|+K_2 L_n}{\sum s_{n,k}\log a_k}\\
&=\liminf_{n\to\infty} \frac{\sum s_{n-L_n,k}\log|d_k|+L_n\log a_r}{\sum s_{n-L_n,k}\log a_k+L_n\log a_r}\\
&=\liminf_{n\to\infty} \frac{\sum s_{n-L_n,k}|\log|d_k||+L_n|\log a_r|}{\sum s_{n-L_n,k}|\log a_k|+L_n|\log a_r|}.
\end{split}
\label{eq:gamma_2-lower-estimate}
\end{align}
Since
\[
\liminf_{n\to\infty} \frac{\sum s_{n-L_n,k}\log|d_k|}{\sum s_{n-L_n,k}\log a_k}\geq\liminf_{n\to\infty} \frac{\sum s_{n,k}\log|d_k|}{\sum s_{n,k}\log a_k}=\gamma_0,
\]
it follows that $\gamma_2\geq 1$ when $\gamma_0>1$, and $\gamma_2\geq\gamma_0$ when $\gamma_0\leq 1$.

To prove the second statement, suppose $\gamma_0>1$ and $\limsup L_n/n<1$. Then there is a constant $M$ such that $L_n/(n-L_n)<M$ for every $n$. 
There also is a $\delta>0$ such that, for all large enough $n$, $\sum s_{n-L_n,k}|\log|d_k||>(1+\delta)\sum s_{n-L_n,k}|\log a_k|$.
Starting from \eqref{eq:gamma_2-lower-estimate}, we then have for all sufficiently large $n$,
\begin{align*}
\frac{\sum s_{n-L_n,k}|\log|d_k||+L_n|\log a_r|}{\sum s_{n-L_n,k}|\log a_k|+L_n|\log a_r|}
&>\frac{(1+\delta)\sum s_{n-L_n,k}|\log a_k|+L_n|\log a_r|}{\sum s_{n-L_n,k}|\log a_k|+L_n|\log a_r|}\\
&\geq \frac{(1+\delta)\sum s_{n-L_n,k}|\log a_k|+M(n-L_n)|\log a_r|}{\sum s_{n-L_n,k}|\log a_k|+M(n-L_n)|\log a_r|}\\
&=1+\frac{\frac{1}{n-L_n}\delta\sum s_{n-L_n,k}|\log a_k|+M|\log a_r|}{\frac{1}{n-L_n}\sum s_{n-L_n,k}|\log a_k|+M|\log a_r|}\\
&\geq 1+\frac{\delta\min_k|\log a_k|+M|\log a_r|}{\max_k |\log a_k|+M|\log a_r|},
\end{align*}
independent of $n$. Hence, $\gamma_2>1$.
\end{proof}

\begin{corollary} \label{cor:equal-gammas}
If $|d_r|\leq a_r$ and $\gamma_0(\xi)\leq 1$, then $\gamma_0(\xi)=\gamma_2(\xi)$.
\end{corollary}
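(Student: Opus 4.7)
The plan is to combine an easy inequality $\gamma_2\le\gamma_0$ coming from the hypothesis $|d_r|\le a_r$ with the reverse inequality $\gamma_2\ge\min\{\gamma_0,1\}=\gamma_0$ which is already furnished by Lemma~\ref{lem:gammas} once $\gamma_0\le 1$. So the corollary is essentially a direct consequence of the preceding lemma plus a one-line monotonicity observation.

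First I would observe that the hypothesis $|d_r|\le a_r$ forces $K_2\ge 0$. Indeed, if $d_r=0$ then $K_2=0$ by definition; and if $d_r\ne 0$ then $K_2=\log a_r-\log|d_r|\ge 0$. Since $\zeta_n(\xi)L_n(\xi)\ge 0$, the additional term $K_2\zeta_n(\xi)L_n(\xi)$ in the numerator of the ratio defining $\gamma_2$ is nonnegative, while the denominator $\sum_k s_{n,k}(\xi)\log a_k$ is strictly negative. Adding a nonnegative quantity to a negative numerator and dividing by a negative denominator produces a smaller (or equal) value, so termwise
\[
\frac{\sum_k s_{n,k}(\xi)\log|d_k|+K_2\zeta_n(\xi)L_n(\xi)}{\sum_k s_{n,k}(\xi)\log a_k}\;\le\;\frac{\sum_k s_{n,k}(\xi)\log|d_k|}{\sum_k s_{n,k}(\xi)\log a_k}.
\]
Taking $\liminf$ on both sides yields $\gamma_2(\xi)\le\gamma_0(\xi)$.

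For the reverse inequality, Lemma~\ref{lem:gammas} gives $\gamma_2(\xi)\ge\min\{\gamma_0(\xi),1\}$; since by assumption $\gamma_0(\xi)\le 1$, this simplifies to $\gamma_2(\xi)\ge\gamma_0(\xi)$. Combining the two inequalities yields the claim $\gamma_0(\xi)=\gamma_2(\xi)$. There is no real obstacle here: the only point requiring care is being attentive to the signs of the numerator and denominator (both negative) when asserting that adding a nonnegative perturbation to the numerator decreases the ratio.
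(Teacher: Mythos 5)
Your proposal is correct and follows exactly the same two-step argument as the paper: deriving $\gamma_2\le\gamma_0$ from $K_2\ge 0$ (via the sign of the denominator), and $\gamma_2\ge\gamma_0$ from Lemma~\ref{lem:gammas} together with the hypothesis $\gamma_0\le 1$. The only difference is that you spell out the sign considerations in more detail than the paper does.
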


\begin{proof}
By Lemma \ref{lem:gammas}, $\gamma_2(\xi)\geq\gamma_0(\xi)$. On the other hand, $|d_r|\leq a_r$ implies $K_2\geq 0$, so $\gamma_2(\xi)\leq\gamma_0(\xi)$ directly from the definition of $\gamma_2(\xi)$.
\end{proof}

\begin{corollary} \label{cor:alpha-range}
Let $\xi\in(0,1)\backslash\mathcal{T}$. 
\begin{enumerate}[(i)]
\item Under the hypotheses of Theorem \ref{thm:main}\,({a}), $\alpha_\phi^+(\xi)\in [\alpha_{\min},\alpha_{\max}]\cup\{\infty\}$.
\item Under the hypotheses of Theorem \ref{thm:main}\,({b}), $\alpha_\phi^+(\xi)\in [1,\alpha_{\max}]\cup\{\infty\}$.
\end{enumerate}
\end{corollary}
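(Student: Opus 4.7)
The plan is to invoke Theorem~\ref{thm:exact-Holder-exponent} and reduce everything to bounding $\gamma(\xi)=\min\{\gamma_0,\gamma_1,\gamma_2\}$. If some $k_i\in\II_0$ then $\alpha_\phi^+(\xi)=\infty$, which lies in both target sets; so assume $k_i\in\II_+$ for every $i$. The key observation is that each of the three liminf ratios can be written, for every $n$, as a convex combination of values in a known interval.

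First, for $\gamma_0$: since $\log|d_k|=\rho_k\log a_k$ for $k\in\II_+$, the ratio
\[
\frac{\sum_k s_{n,k}\log|d_k|}{\sum_k s_{n,k}\log a_k}=\sum_{k\in\II_+}\lambda_{n,k}\,\rho_k,\qquad \lambda_{n,k}:=\frac{s_{n,k}|\log a_k|}{\sum_j s_{n,j}|\log a_j|},
\]
is a convex combination of the $\rho_k$'s, so lies in $[\alpha_{\min},\alpha_{\max}]$; hence $\gamma_0\in[\alpha_{\min},\alpha_{\max}]$. For $\gamma_1$, the case $\chi_n=0$ reduces to the $\gamma_0$ expression. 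When $\chi_n=1$ we necessarily have $d_1,d_r\neq 0$ (otherwise $K_1=0$), and splitting the sums at index $n-L_n$ together with the identity $L_n\log|d_r|+K_1L_n=L_n\rho_1\log a_r$ (a direct rewriting of \eqref{eq:K1}) turns the ratio into
\[
\frac{\sum_k s_{n-L_n,k}|\log a_k|\rho_k+L_n|\log a_r|\rho_1}{\sum_k s_{n-L_n,k}|\log a_k|+L_n|\log a_r|},
\]
again a convex combination of $\rho_k$'s, so $\gamma_1\in[\alpha_{\min},\alpha_{\max}]$. For $\gamma_2$, the case $\zeta_n=1$ forces $d_r\neq 0$ and $K_2=\log a_r-\log|d_r|$, so the term $L_n\log|d_r|$ from the trailing run of $r$'s is cancelled and replaced by $L_n\log a_r$; the resulting ratio is a weighted average of the $\rho_k$'s (weights $s_{n-L_n,k}|\log a_k|$) together with the value $1$ (weight $L_n|\log a_r|$), so it lies in $[\min(\alpha_{\min},1),\max(\alpha_{\max},1)]$.

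From these three inclusions, the upper bound $\gamma(\xi)\leq\gamma_0\leq\alpha_{\max}$ is immediate in both parts. For (i), under the hypotheses of Theorem~\ref{thm:main}(a) there are two sub-cases: if $\Lambda=\emptyset$ then $\zeta_n\equiv 0$ and $\gamma_2=\gamma_0\in[\alpha_{\min},\alpha_{\max}]$; if instead $|d_k|\geq a_k$ for some $k\in\II_+$ then $\rho_k\leq 1$, so $\alpha_{\min}\leq 1$ and $\min(\alpha_{\min},1)=\alpha_{\min}$, giving $\gamma_2\geq\alpha_{\min}$ (the $\gamma_2$ bound in the second case can alternatively be read off from Lemma~\ref{lem:gammas}). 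Combined with the $\gamma_0,\gamma_1$ bounds, this yields $\gamma(\xi)\in[\alpha_{\min},\alpha_{\max}]$. For (ii), $|d_k|<a_k$ for every $k\in\II_+$ gives $\rho_k>1$, hence $\alpha_{\min}>1$; then $\gamma_0,\gamma_1\geq\alpha_{\min}>1$ while $\gamma_2\geq\min(\alpha_{\min},1)=1$, so $\gamma(\xi)\in[1,\alpha_{\max}]$.

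The only step that requires any care is the algebraic rewriting for $\gamma_1$ and $\gamma_2$ at the tail of $r$'s, i.e.\ checking that the peculiar constants $K_1,K_2$ are precisely what is needed to absorb the extra $L_n\log|d_r|$ contribution into either a $\rho_1$-term or a $1$-term. These are routine manipulations from \eqref{eq:K1}--\eqref{eq:K2} once the decomposition $\sum s_{n,k}=\sum s_{n-L_n,k}+L_n\mathbf{1}_{k=r}$ is in hand, so no genuine obstacle remains.
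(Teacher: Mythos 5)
Your proof is correct and follows the same high-level outline as the paper's — reduce via Theorem~\ref{thm:exact-Holder-exponent} to bounding $\gamma_0,\gamma_1,\gamma_2$ — but it fills in the individual bounds with a self-contained convex-combination argument, whereas the paper simply asserts $\gamma_0\in[\alpha_{\min},\alpha_{\max}]$ as obvious, cites an external reference (\cite[Lemma~6.3]{Allaart}) for $\gamma_1\geq\alpha_{\min}$, and invokes Lemma~\ref{lem:gammas} for the $\gamma_2$ bounds. Your central observation — that the constants $K_1,K_2$ are built precisely so that, after splitting the digit counts at index $n-L_n$, the trailing-run contribution $L_n\log|d_r|+K_iL_n$ becomes $L_n\rho_1\log a_r$ (for $\gamma_1$) or $L_n\cdot1\cdot\log a_r$ (for $\gamma_2$), turning each ratio into a weighted average of the $\rho_k$'s, possibly together with the value $1$ — makes the interval containment transparent and treats all three exponents uniformly. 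What this buys is self-containedness and an explicit mechanism behind the bounds; what the paper's route buys is brevity, at the cost of outsourcing the $\gamma_1$ bound to an earlier paper. One small wording slip: $\chi_n=1$ does not by itself force $d_1\neq0$; the correct logic, which is what you actually use, is that if $d_1=0$ or $d_r=0$ then $K_1=0$ and the $K_1\chi_nL_n$ term vanishes, so only the case $K_1\neq0$ (which does force $d_1,d_r\neq0$) requires the rewriting. That is a presentational matter, not a gap.
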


\begin{proof}
If $k_i(\xi)\in\II_0$ for some $i$, then $\alpha_\phi^+(\xi)=\infty$, so assume $k_i(\xi)\in\II_+$ for all $i$. Clearly $\gamma_0(\xi)\in[\alpha_{\min},\alpha_{\max}]$, so $\alpha_\phi^+(\xi)\leq\alpha_{\max}$. As shown in \cite[Lemma 6.3]{Allaart}, $\gamma_1(\xi)\geq\alpha_{\min}$ also. If $|d_k|\geq a_k$ for some $k$, then $\alpha_{\min}\leq 1$ and so $\gamma_2(\xi)\geq\min\{\gamma_0(\xi),1\}\geq\alpha_{\min}$ by Lemma \ref{lem:gammas}. If $\Lambda=\emptyset$, then $\gamma_2(\xi)=\gamma_0(\xi)$ because $\zeta_n(\xi)=0$ for all $n$. Finally, if $|d_k|<a_k$ for all $k$, then $\alpha_{\min}>1$ and so $\gamma_2\geq 1$ by Lemma \ref{lem:gammas}. Thus, the corollary follows from Theorem \ref{thm:exact-Holder-exponent}.
\end{proof}

The proof of Theorem \ref{thm:exact-Holder-exponent} is quite long and technical. We split it in two parts: First we prove the lower bound (Steps 1 and 2), and then, after introducing some useful facts about divided differences, we prove the upper bound (Steps 3 and 4).

We shall use the following terminology and notation. By a {\em basic interval} of {\em order} $n$ we shall mean an interval of the form $I=S_{k_1}\circ\dots\circ S_{k_n}([0,1])$, where $k_1,\dots,k_n\in\{1,2,\dots,r\}$. For any interval $I$, $|I|$ will denote its length. Let $I_{n,j}: j=1,2,\dots,r^n$ denote the basic intervals of order $n$, enumerated in order from left to right. For $\xi\not\in\mathcal{T}$, there is for each $n$ a unique $j$ such that $\xi\in I_{n,j}$; denote this interval $I_{n,j}$ by $I_n(\xi)$. If $I_{n,j}=S_{k_1}\circ\dots\circ S_{k_n}([0,1])$ and $x,\xi\in I_{n,j}$, there are unique points $t_n,\tau_n\in [0,1]$ such that $S_{k_1}\circ\dots\circ S_{k_n}(t_n)=x$ and $S_{k_1}\circ\dots\circ S_{k_n}(\tau_n)=\xi$. In that case, we have the explicit expression (see \cite[p.~135]{Dubuc})
\begin{equation}
\phi(x)-\phi(\xi)=(t_n-\tau_n)\sum_{m=1}^n c_{k_m}\prod_{i=1}^{m-1}d_{k_i}\prod_{i=m+1}^n a_{k_i}+\big(\phi(t_n)-\phi(\tau_n)\big)\prod_{m=1}^n d_{k_m}.
\label{eq:difference-expression}
\end{equation}

Finally, we use the short hand notation
\[
a_{\max}:=\max_{k\in\mathcal{I}} a_k, \qquad a_{\min}:=\min_{k\in\mathcal{I}} a_k.
\]

\begin{proof}[Proof of Theorem \ref{thm:exact-Holder-exponent} (lower bound)]
Here we show that $\alpha_\phi^+(\xi)\geq \gamma(\xi)$. We assume throughout the proof that $K_1>0$ and $K_2>0$; the proof in the other cases is simpler and shorter. Note that our assumption implies that $\gamma=\min\{\gamma_1,\gamma_2\}$, and furthermore
\begin{equation}
|d_1|>0, \qquad |d_r|>0, \qquad a_r>|d_r|, \qquad\mbox{and}\qquad \frac{\log|d_1|}{\log a_1}<\frac{\log|d_r|}{\log a_r}.
\label{eq:assumed-inequalities}
\end{equation}
Fix $\xi$ with coding $(k_1,k_2,\dots)$. The case when $k_i\in\II_0$ for some $i$ is trivial, so we assume that $k_i\in\II_+$ for all $i$.
Let $\gamma=\gamma(\xi)$. 

\bigskip
{\em Step 1.} We assume that $\gamma\leq 1$ and show that $\phi\in \CC_+^\alpha(\xi)$ for every $\alpha<\gamma$. 

Fix $\alpha<\gamma$ and note that in particular, $\alpha<\gamma_1(\xi)\leq\gamma_0(\xi)$. Take a point $x>\xi$ with $x-\xi\leq a_{\min}^2$, and let $n$ be the largest integer such that
\begin{equation}
x-\xi\leq a_{\min}|I_n(\xi)|=a_{\min} a_{k_1}\dots a_{k_n}.
\label{eq:choice-of-n}
\end{equation}
Then $x-\xi>a_{k_1}\dots a_{k_{n+1}}a_{\min}\geq a_{k_1}\dots a_{k_n}a_{\min}^2$, so there is an absolute constant $A>0$ such that $A^{-1}<(x-\xi)/a_{k_1}\dots a_{k_n}<A$. We write this as $x-\xi\asymp a_{k_1}\dots a_{k_n}$.

\bigskip
{\em Case 1.} If $x\in I_n(\xi)$, we have simply, using \eqref{eq:difference-expression},
\begin{equation*}
\frac{|\phi(x)-\phi(\xi)|}{(x-\xi)^\alpha}\asymp \left|(t_n-\tau_n)\sum_{m=1}^n \frac{c_{k_m}}{d_{k_m}}\prod_{i=1}^m \frac{d_{k_i}}{a_{k_i}}\prod_{i=1}^n a_{k_i}^{1-\alpha} + \big(\phi(t_n)-\phi(\tau_n)\big)\prod_{m=1}^n \frac{d_{k_m}}{a_{k_m}^\alpha}\right|
\end{equation*}
for some $t_n,\tau_n\in[0,1]$. Since $|t_n-\tau_n|$, $c_{k_m}/d_{k_m}$ and $|\phi(t_n)-\phi(\tau_n)|$ are all bounded, it follows that there is a constant $C_1$ such that
\begin{equation}
\frac{|\phi(x)-\phi(\xi)|}{(x-\xi)^\alpha}\leq C_1\left[\sum_{m=1}^n \prod_{k=1}^r\left(\frac{|d_k|}{a_k^\alpha}\right)^{s_{m,k}}\prod_{k=1}^r a_k^{(1-\alpha)(s_{n,k}-s_{m,k})}+\prod_{k=1}^r \left(\frac{|d_k|}{a_k^\alpha}\right)^{s_{n,k}}\right].
\label{eq:monster-difference-estimate}
\end{equation}
Now, since $\alpha<\gamma\leq\gamma_0(\xi)$, 
there is a number $\eps>0$ and an integer $N$ such that $\alpha+\eps<1$ and $\sum s_{n,k}\log|d_k|<(\alpha+\eps)\sum s_{n,k}\log a_k$ for all $n>N$, and so
\begin{equation}
\prod_{k=1}^r\left(\frac{|d_k|}{a_k^\alpha}\right)^{s_{n,k}}<\prod_{k=1}^r a_k^{\eps s_{n,k}}\leq a_{\max}^{\eps n} \qquad\forall n>N.
\label{eq:product-estimate}
\end{equation}
This shows that the last product in square brackets in \eqref{eq:monster-difference-estimate} tends to zero. The summation in \eqref{eq:monster-difference-estimate} we split in two parts. Observe first that
\[
\prod_{k=1}^r a_k^{(1-\alpha)(s_{n,k}-s_{m,k})} \leq a_{\max}^{(1-\alpha)(n-m)}.
\]
Since $\alpha<1$, it follows at once that
\[
\sum_{m=1}^N \prod_{k=1}^r\left(\frac{|d_k|}{a_k^\alpha}\right)^{s_{m,k}}a_{\max}^{(1-\alpha)(n-m)} \to 0 \qquad\mbox{as $n\to\infty$}.
\]
For the remaining terms we can apply \eqref{eq:product-estimate}, obtaining
\begin{align*}
\sum_{m=N+1}^n \prod_{k=1}^r\left(\frac{|d_k|}{a_k^\alpha}\right)^{s_{m,k}}a_{\max}^{(1-\alpha)(n-m)} &< \sum_{m=N+1}^n a_{\max}^{\eps m}a_{\max}^{(1-\alpha)(n-m)}\\
&\leq a_{\max}^{(1-\alpha)n}\cdot\frac{a_{\max}^{(\alpha+\eps-1)(n+1)}}{a_{\max}^{\alpha+\eps-1}-1}\\
&=\frac{a_{\max}^{\alpha+\eps-1}}{a_{\max}^{\alpha+\eps-1}-1}\cdot a_{\max}^{\eps n}
\to 0 \qquad\mbox{as $n\to\infty$}.
\end{align*}
As a result, the upper estimate for
$|\phi(x)-\phi(\xi)|/(x-\xi)^\alpha$ in \eqref{eq:monster-difference-estimate}, which holds when $x\in I_n(\xi)$, tends to zero as $n\to\infty$. 

\bigskip
{\em Case 2.} Suppose now that $x\not\in I_n(\xi)$. 
Let $l:=L_n(\xi)$. Note that $|I_n(\xi)|=a_{k_1}\cdots a_{k_n}=a_{k_1}\cdots a_{k_{n-l}}a_r^l$. Let $p$ be the largest integer such that
\begin{equation}
a_{k_1}\cdots a_{k_{n-l-1}}a_{k_{n-l}+1}a_1^{l+p}\geq x-\xi.
\label{eq:choice-of-p}
\end{equation}
Observe that $l+p\geq 0$ in view of \eqref{eq:choice-of-n}. Hence, there is an index $j'$ such that $|I_{n+p,j'}|=a_{k_1}\cdots a_{k_{n-l-1}}a_{k_{n-l}+1}a_1^{l+p}$, and it follows that the interval $I_{n+p,j'}$ is adjacent to $I_n(\xi)$. By \eqref{eq:choice-of-p}, $x\in I_{n+p,j'}$ and so $I_{n+p,j'}=I_{n+p}(x)$. Observe that our choice of $p$ implies
\begin{equation}
a_1^{l+p} \asymp a_r^l,
\label{eq:a-power-asymp}
\end{equation}
and then also
\begin{equation}
|d_1|^{l+p} \asymp |d_1|^{l\log a_r/\log a_1}.
\label{eq:d-power-asymp}
\end{equation}
The interval $I_{n+p}(x)$ has coding $(k_1',\dots,k_{n+p}')$, where $k_i'=k_i$ for $1\leq i<n-l$, $k_{n-l}'=k_{n-l}+1$, and $k_i'=1$ for $n-l<i\leq n+p$. Let $\xi_n\in\mathcal{T}$ be the point joining $I_n(\xi)$ and $I_{n+p}(x)$. We have already shown that $\big(\phi(\xi_n)-\phi(\xi)\big)/(x-\xi)^\alpha\to 0$, so it remains to establish that
\begin{equation}
\frac{\phi(x)-\phi(\xi_n)}{(x-\xi)^\alpha}\to 0.
\end{equation}
(Recall that $x\downarrow \xi$ and $n$ is determined by $x$ through \eqref{eq:choice-of-n}.) By \eqref{eq:difference-expression}, we have
\begin{equation}
\phi(x)-\phi(\xi_n)=t_n\sum_{m=1}^{n+p} c_{k_m'}\prod_{i=1}^{m-1}d_{k_i'}\prod_{i=m+1}^{n+p} a_{k_i'}+\big(\phi(t_n)-\phi(0)\big)\prod_{m=1}^{n+p} d_{k_m'},
\end{equation}
for some point $t_n\in[0,1]$. Hence, for some constant $C_2$,
\begin{equation}
|\phi(x)-\phi(\xi_n)|\leq C_2\left[\sum_{m=1}^{n+p}\prod_{k=1}^r|d_k|^{s_{m,k}'}\prod_{k=1}^r a_k^{s_{n+p,k}'-s_{m,k}'}+\prod_{k=1}^r |d_k|^{s_{n+p,k}'}\right],
\label{eq:K6-estimate}
\end{equation}
where $s_{m,k}':=\#\{i\leq m: k_i'=k\}$ for $1\leq m\leq n+p$. We first rewrite
\begin{equation*}
\prod_{k=1}^r |d_k|^{s_{n+p,k}'}=\prod_{k=1}^r|d_k|^{s_{n,k}} \cdot \frac{|d_{k_{n-l}+1}|}{|d_{k_{n-l}}|} \cdot \frac{|d_1|^{l+p}}{|d_r|^l}.
\end{equation*}
If $d_{k_{n-l}+1}=0$, then $\prod_{k=1}^r |d_k|^{s_{n+p,k}'}=0$ and we are done with this term. Otherwise, $\chi_n(\xi)=1$, and \eqref{eq:d-power-asymp} gives
\begin{equation}
\prod_{k=1}^r |d_k|^{s_{n+p,k}'} \asymp \prod_{k=1}^r|d_k|^{s_{n,k}} \cdot \left(\frac{|d_1|^{\log a_r/\log a_1}}{|d_r|}\right)^l=e^{K_1 L_n}\prod_{k=1}^r|d_k|^{s_{n,k}},
\label{eq:prime-product}
\end{equation}
using \eqref{eq:K1}. In this case, since $\alpha<\gamma_1(\xi)$, there exist $\eps>0$ and $N\in\NN$ such that $\sum s_{n,k}\log|d_k|+K_1\chi_n L_n<(\alpha+\eps)\sum s_{n,k}\log a_k$ for all $n>N$. It follows that, analogously to \eqref{eq:product-estimate},
\begin{equation}
e^{K_1\chi_n L_n} \prod_{k=1}^r\left(\frac{|d_k|}{a_k^\alpha}\right)^{s_{n,k}}< a_{\max}^{\eps n} \qquad\forall n>N.
\label{eq:K1-product-estimate}
\end{equation}
Hence by \eqref{eq:prime-product},
\begin{equation}
(x-\xi)^{-\alpha}\prod_{k=1}^r |d_k|^{s_{n+p,k}'}\asymp e^{K_1\chi_n L_n} \prod_{k=1}^r\left(\frac{|d_k|}{a_k^\alpha}\right)^{s_{n,k}}\to 0.
\label{eq:prime-prod-asymp}
\end{equation}
The summation in \eqref{eq:K6-estimate} we divide in two parts: 
\[
\sum_{m=1}^{n+p}\prod_{k=1}^r|d_k|^{s_{m,k}'}\prod_{k=1}^r a_k^{s_{n+p,k}'-s_{m,k}'}=\sum_{m=1}^{n-l-1}+\sum_{m=n-l}^{n+p}=:S_1+S_2.
\]
First, we rewrite
\[
S_1=\sum_{m=1}^{n-l-1}\prod_{k=1}^r|d_k|^{s_{m,k}}\prod_{k=1}^r a_k^{s_{n,k}-s_{m,k}}\prod_{k=1}^r a_k^{s_{n+p,k}'-s_{n,k}}.
\]
By \eqref{eq:a-power-asymp},
\[
\prod_{k=1}^r a_k^{s_{n+p,k}'-s_{n,k}}=\frac{a_{k_{n-l}+1}}{a_{k_{n-l}}}\cdot\frac{a_1^{l+p}}{a_r^l} \asymp 1,
\]
and therefore,
\[
S_1 \asymp \sum_{m=1}^{n-l-1}\prod_{k=1}^r|d_k|^{s_{m,k}}\prod_{k=1}^r a_k^{s_{n,k}-s_{m,k}}.
\]
Thus $(x-\xi)^{-\alpha}S_1\to 0$ as in Case 1. The second summation we rewrite as
\[
S_2=\sum_{m=n-l}^{n+p} \prod_{k=1}^r |d_k|^{s_{n+p,k}'} \left(\frac{a_1}{|d_1|}\right)^{n+p-m}=\prod_{k=1}^r |d_k|^{s_{n+p,k}'} \cdot \sum_{j=0}^{l+p}\left(\frac{a_1}{|d_1|}\right)^j.
\]
We now consider three cases:

(a) If $a_1<|d_1|$, then $S_2 \asymp \prod_{k=1}^r |d_k|^{s_{n+p,k}'}$, so $(x-\xi)^{-\alpha}S_2\to 0$ by \eqref{eq:prime-prod-asymp}.

(b) If $a_1=|d_1|$, then $S_2=(l+p+1)\prod_{k=1}^r |d_k|^{s_{n+p,k}'}$. By \eqref{eq:a-power-asymp}, there are constants $C_3$ and $C_4$ such that $l+p<C_3 l+C_4$, so $l+p+1\leq 2C_3 n$ for all large enough $n$, since $l\leq n$. Thus, by \eqref{eq:prime-prod-asymp} and \eqref{eq:K1-product-estimate},
$(x-\xi)^{-\alpha}S_2\to 0$.

(c) If $a_1>|d_1|$, then the largest term in $S_2$ is the one with $m=n-l$, but this term differs at most by a multiplicative constant from the last term in $S_1$. So in this case, too, $(x-\xi)^{-\alpha}S_2\to 0$. This completes Step 1.

\bigskip

{\em Step 2.} We assume that $\gamma>1$ and show that the series
\begin{equation}
D(\xi):=\sum_{m=1}^\infty \frac{c_{k_m}}{d_{k_m}}\prod_{k=1}^r \left(\frac{d_k}{a_k}\right)^{s_{m,k}}
\label{eq:the-derivative-of-phi}
\end{equation}
converges absolutely, and
\begin{equation}
\frac{|\phi(x)-\phi(\xi)-D(\xi)(x-\xi)|}{(x-\xi)^\alpha}\to 0 \qquad \mbox{for $1<\alpha<\gamma$}.
\label{eq:Holder-exponent-bigger-than-one}
\end{equation}
It then follows that $\phi\in \CC_+^\alpha(\xi)$ for every $\alpha<\gamma$, and $\phi$ is right differentiable with $\phi_+'(\xi)=D(\xi)$.

First, since $\gamma>1$, \eqref{eq:product-estimate} holds with $\alpha=1$ for some $\eps>0$ and some $N\in\NN$. Since $|c_{k_m}/d_{k_m}|$ is uniformly bounded, we conclude that the series in \eqref{eq:the-derivative-of-phi} converges absolutely.

For $x>\xi$, let $n$ again be the largest integer satisfying \eqref{eq:choice-of-n}. 

\bigskip
{\em Case 1.} Assume first that $x\in I_n(\xi)$. Then there are numbers $t_n,\tau_n$ in $[0,1]$ such that
\begin{equation*}
\phi(x)-\phi(\xi)=(x-\xi)\sum_{m=1}^n \frac{c_{k_m}}{d_{k_m}}\prod_{k=1}^r \left(\frac{d_k}{a_k}\right)^{s_{m,k}}+\left(\phi(t_n)-\phi(\tau_n)\right)\prod_{k=1}^r d_k^{s_{n,k}},
\end{equation*}
so that
\begin{align}
\begin{split}
&\frac{|\phi(x)-\phi(\xi)-D(\xi)(x-\xi)|}{(x-\xi)^\alpha}=\\
&\qquad\qquad =(x-\xi)^{1-\alpha}\sum_{m=n+1}^\infty \frac{c_{k_m}}{d_{k_m}}\prod_{k=1}^r \left(\frac{d_k}{a_k}\right)^{s_{m,k}}+\frac{\phi(t_n)-\phi(\tau_n)}{(x-\xi)^\alpha}\prod_{k=1}^r d_k^{s_{n,k}}.
\end{split}
\label{eq:exact-difference-quotient}
\end{align}
There exist $\eps>0$ and $N\in\NN$ such that \eqref{eq:product-estimate} holds. So for $n>N$,
\begin{align*}
&\left|(x-\xi)^{1-\alpha}\sum_{m=n+1}^\infty \frac{c_{k_m}}{d_{k_m}}\prod_{k=1}^r \left(\frac{d_k}{a_k}\right)^{s_{m,k}}\right|
\leq C_1(x-\xi)^{1-\alpha} \sum_{m=n+1}^\infty \prod_{k=1}^r \left(\frac{|d_k|}{a_k}\right)^{s_{m,k}}\\
&\qquad\qquad \asymp \prod_{k=1}^r a_k^{(1-\alpha)s_{n,k}} \sum_{m=n+1}^\infty \prod_{k=1}^r \left(\frac{|d_k|}{a_k}\right)^{s_{m,k}}\\
&\qquad\qquad =\sum_{m=n+1}^\infty \prod_{k=1}^r\left(\frac{|d_k|}{a_k^\alpha}\right)^{s_{m,k}} \prod_{k=1}^r a_k^{(\alpha-1)(s_{m,k}-s_{n,k})}\\
&\qquad\qquad \leq \sum_{m=n+1}^\infty a_{\max}^{\eps m}a_{\max}^{(\alpha-1)(m-n)}
 =a_{\max}^{-(\alpha-1)n} \sum_{m=n+1}^\infty a_{\max}^{(\eps+\alpha-1)m}\\
&\qquad\qquad \asymp a_{\max}^{-(\alpha-1)n} a_{\max}^{(\eps+\alpha-1)n}
 =a_{\max}^{\eps n}\to 0.
\end{align*}
Again by \eqref{eq:product-estimate}, 
\[
(x-\xi)^{-\alpha}\prod_{k=1}^r |d|_k^{s_{n,k}} \asymp \prod_{k=1}^r\left(\frac{|d_k|}{a_k^\alpha}\right)^{s_{n,k}}\to 0.
\]
Combining these last two results with \eqref{eq:exact-difference-quotient} yields \eqref{eq:Holder-exponent-bigger-than-one} for $x\in I_n(\xi)$.

\bigskip
{\em Case 2.} Suppose now that $x\not\in I_n(\xi)$. Let $l:=L_n(\xi)$. We define the integer $p$, the adjoining interval $I_{n+p,j'}$ to the right of $I_n(\xi)$ and the connecting point $\xi_n\in\mathcal{T}$ as in Case 2 of Step 1, and note that \eqref{eq:a-power-asymp} and \eqref{eq:d-power-asymp} hold. By \eqref{eq:difference-expression}, there are points $\tau_n,t_n\in[0,1]$ such that
\begin{equation}
\phi(\xi_n)-\phi(\xi)=(\xi_n-\xi)\sum_{m=1}^n \frac{c_{k_m}}{d_{k_m}}\prod_{k=1}^r\left(\frac{d_k}{a_k}\right)^{s_{m,k}}
+\big(\phi(1)-\phi(\tau_n)\big)\prod_{k=1}^r d_k^{s_{n,k}},
\label{eq:difference-first-half}
\end{equation}
and
\begin{equation}
\phi(x)-\phi(\xi_n)=(x-\xi_n)\sum_{m=1}^{n+p}\frac{c_{k_m'}}{d_{k_m}'}\prod_{k=1}^r \left(\frac{d_k}{a_k}\right)^{s_{m,k}'}
+\big(\phi(t_n)-\phi(0)\big)\prod_{k=1}^r d_k^{s_{n+p,k}'},
\label{eq:difference-second-half}
\end{equation}
where $k_i'$ and $s_{m,k}'$ are defined as in Step 1. As in \eqref{eq:prime-prod-asymp}, $(x-\xi)^{-\alpha}\prod_{k=1}^r |d_k|^{s_{n+p,k}'}\to 0$ and, more straightforwardly, $(x-\xi)^{-\alpha}\prod_{k=1}^r |d_k|^{s_{n,k}}\to 0$, so the remainder terms in \eqref{eq:difference-first-half} and \eqref{eq:difference-second-half} are of no concern. Putting $\nu:=n-l$ for brevity, we can write the summation in \eqref{eq:difference-second-half} as
\begin{equation}
\sum_{m=1}^{n+p}\frac{c_{k_m'}}{d_{k_m}'}\prod_{k=1}^r \left(\frac{d_k}{a_k}\right)^{s_{m,k}'}= \sum_{m=1}^n \frac{c_{k_m}}{d_{k_m}}\prod_{k=1}^r\left(\frac{d_k}{a_k}\right)^{s_{m,k}}+B_n \prod_{k=1}^r \left(\frac{d_k}{a_k}\right)^{s_{\nu,k}},
\end{equation}
where
\begin{align}
\begin{split}
B_n:=&\left(\frac{c_{k_{\nu}+1}a_{k_{\nu}}}{a_{k_{\nu}+1}d_{k_{\nu}}}-\frac{c_{k_{\nu}}}{d_{k_{\nu}}}\right)+\frac{c_1}{d_1}\sum_{m=\nu+1}^{n+p}\frac{d_{k_{\nu}+1}a_{k_{\nu}}}{a_{k_{\nu}+1}d_{k_{\nu}}}\left(\frac{d_1}{a_1}\right)^{m-\nu}\\
&\qquad\qquad\qquad\qquad\qquad\qquad\qquad-\frac{c_r}{d_r}\sum_{m=\nu+1}^n\left(\frac{d_r}{a_r}\right)^{m-\nu}.
\end{split}
\label{eq:def-of-Bn}
\end{align}
Thus,
\[
\phi(x)-\phi(\xi)=(x-\xi)\sum_{m=1}^n \frac{c_{k_m}}{d_{k_m}}\prod_{k=1}^r\left(\frac{d_k}{a_k}\right)^{s_{m,k}} +(x-\xi_n)B_n \prod_{k=1}^r \left(\frac{d_k}{a_k}\right)^{s_{\nu,k}}+R_1+R_2,
\]
where $R_1$ and $R_2$ denote the remainder terms in \eqref{eq:difference-first-half} and \eqref{eq:difference-second-half}, respectively. This gives
\begin{align*}
\frac{|\phi(x)-\phi(\xi)-D(\xi)(x-\xi)|}{(x-\xi)^\alpha} &\leq (x-\xi)^{1-\alpha}\sum_{m=n+1}^\infty \frac{c_{k_m}}{|d_{k_m}|}\prod_{k=1}^r\left(\frac{|d_k|}{a_k}\right)^{s_{m,k}}\\
&\qquad+(x-\xi)^{1-\alpha}|B_n| \prod_{k=1}^r \left(\frac{|d_k|}{a_k}\right)^{s_{\nu,k}}+\frac{|R_1|+|R_2|}{(x-\xi)^\alpha}.
\end{align*}
It has already been established that the first and last terms tend so zero, so we focus on the term involving $B_n$.

Recall from our assumptions at the beginning of the proof that $|d_r|<a_r$, so the second summation in \eqref{eq:def-of-Bn} is bounded. We consider three subcases:

(i) If $|d_1|>a_1$, then $K_1>K_2$. If $d_{k_\nu+1}\neq 0$, then $\chi_n(\xi)=1$ and the dominant term in the first summation in $B_n$ is the one with $m=n+p$, which is of order $(|d_1|/a_1)^{n+p-\nu}=(|d_1|/a_1)^{l+p}$. So in this case,
\begin{align*}
(x-\xi)^{1-\alpha}|B_n| \prod_{k=1}^r \left(\frac{|d_k|}{a_k}\right)^{s_{\nu,k}}
&\asymp \prod_{k=1}^r a_k^{(1-\alpha)s_{n,k}}\prod_{k=1}^r \left(\frac{|d_k|}{a_k}\right)^{s_{n,k}}\left(\frac{a_r}{|d_r|}\right)^l \left(\frac{|d_1|}{a_1}\right)^{l+p}\\
&\asymp \prod_{k=1}^r \left(\frac{|d_k|}{a_k^\alpha}\right)^{s_{n,k}}\left(\frac{|d_1|^{\log a_r/\log a_1}}{|d_r|}\right)^l\\
&=e^{K_1\chi_n(\xi)L_n(\xi)}\prod_{k=1}^r\left(\frac{|d_k|}{a_k^\alpha}\right)^{s_{n,k}} \to 0
\end{align*}
as in Step 1, where we used \eqref{eq:a-power-asymp} and \eqref{eq:d-power-asymp} in the second step, and the convergence to zero follows from \eqref{eq:K1-product-estimate}. If, on the other hand, $d_{k_\nu+1}=0$, then $B_n$ simplifies to
\begin{equation} 
B_n=\frac{c_{k_{\nu}+1}a_{k_{\nu}}}{a_{k_{\nu}+1}d_{k_{\nu}}}-\frac{c_{k_{\nu}}}{d_{k_{\nu}}}-\frac{c_r}{a_r-d_r}\left\{1-\left(\frac{d_r}{a_r}\right)^l\right\}.
\label{eq:B_n-simplified}
\end{equation}
At this point, there are two possibilities: 

(a) $k_\nu\not\in\Lambda$. Then $B_n$ simplifies further to 
\[
B_n=\frac{c_r}{a_r-d_r}\left(\frac{d_r}{a_r}\right)^l
\]
by definition of $\Lambda$, so that \eqref{eq:product-estimate} gives
\begin{align*}
(x-\xi)^{1-\alpha}|B_n| \prod_{k=1}^r \left(\frac{|d_k|}{a_k}\right)^{s_{\nu,k}} 
&\asymp \prod_{k=1}^r a_k^{(1-\alpha)s_{n,k}} \left(\frac{|d_r|}{a_r}\right)^l \prod_{k=1}^r \left(\frac{|d_k|}{a_k}\right)^{s_{\nu,k}}\\
&=\prod_{k=1}^r \left(\frac{|d_k|}{a_k^\alpha}\right)^{s_{n,k}} \to 0.
\end{align*}

(b) $k_\nu\in\Lambda$. Then $\zeta_n(\xi)=1$ and $B_n\asymp 1$, so that
\begin{align}
\begin{split}
(x-\xi)^{1-\alpha}|B_n| \prod_{k=1}^r \left(\frac{|d_k|}{a_k}\right)^{s_{\nu,k}} 
&\asymp \left(\frac{a_r}{|d_r|}\right)^l \prod_{k=1}^r \left(\frac{|d_k|}{a_k^\alpha}\right)^{s_{n,k}}\\
&=e^{K_2\zeta_n(\xi)L_n(\xi)}\prod_{k=1}^r \left(\frac{|d_k|}{a_k^\alpha}\right)^{s_{n,k}} \to 0,
\end{split}
\label{eq:K2-exponential}
\end{align}
since $\alpha<\gamma_2$, where we used the obvious analogy to \eqref{eq:K1-product-estimate}.

(ii) Suppose next that $|d_1|=a_1$. If $d_{k_\nu+1}=0$, the situation is as in case (i). If $d_{k_\nu+1}\neq 0$, then we have $|B_n|\asymp l+p$, and hence
\[
(x-\xi)^{1-\alpha}|B_n| \prod_{k=1}^r \left(\frac{|d_k|}{a_k}\right)^{s_{\nu,k}} 
\asymp (l+p)e^{K_1\chi_n(\xi)L_n(\xi)}\prod_{k=1}^r\left(\frac{|d_k|}{a_k^\alpha}\right)^{s_{n,k}} \to 0
\]
by \eqref{eq:K1-product-estimate}, since $l+p=O(n)$ (see Step 1).

(iii) Suppose finally that $|d_1|<a_1$. Then $K_2>K_1$. Summing the finite geometric series in \eqref{eq:def-of-Bn} gives
\begin{align}
\begin{split}
B_n=\frac{c_{k_{\nu}+1}a_{k_{\nu}}}{a_{k_{\nu}+1}d_{k_{\nu}}}&-\frac{c_{k_{\nu}}}{d_{k_{\nu}}}+\frac{c_1}{a_1-d_1}\cdot\frac{a_{k_\nu}d_{k_\nu+1}}{d_{k_\nu}a_{k_\nu+1}}\left\{1-\left(\frac{d_1}{a_1}\right)^{l+p}\right\}\\
&\qquad\qquad -\frac{c_r}{a_r-d_r}\left\{1-\left(\frac{d_r}{a_r}\right)^l\right\}.
\end{split}
\label{eq:B_n-summed}
\end{align}
Note that $B_n$ is bounded. If $k_\nu\in\Lambda$, then $\zeta_n(\xi)=1$ and we have \eqref{eq:K2-exponential}. Suppose $k_\nu\not\in\Lambda$. Then $B_n$ simplifies to
\[
B_n=\frac{c_r}{a_r-d_r}\left(\frac{d_r}{a_r}\right)^l-\frac{c_1}{a_1-d_1}\cdot\frac{a_{k_\nu}d_{k_\nu+1}}{d_{k_\nu}a_{k_\nu+1}}\left(\frac{d_1}{a_1}\right)^{l+p}.
\]
As for the first term, we see at once that
\[
(x-\xi)^{1-\alpha}\left(\frac{|d_r|}{a_r}\right)^l \prod_{k=1}^r \left(\frac{|d_k|}{a_k}\right)^{s_{\nu,k}} 
\asymp \prod_{k=1}^r\left(\frac{|d_k|}{a_k^\alpha}\right)\to 0.
\]
The second term vanishes when $d_{k_\nu+1}=0$. If $d_{k_\nu+1}\neq 0$, then $\chi_n(\xi)=1$, and we obtain
\[
(x-\xi)^{1-\alpha}\left(\frac{|d_1|}{a_1}\right)^{l+p} \prod_{k=1}^r \left(\frac{|d_k|}{a_k}\right)^{s_{\nu,k}} 
\asymp e^{K_1\chi_n(\xi)L_n(\xi)}\prod_{k=1}^r\left(\frac{|d_k|}{a_k^\alpha}\right)^{s_{n,k}}\to 0
\]
as in case (i) above. This completes Step 2, and the proof of the lower bound.
\end{proof}

The proof of the upper bound in Theorem \ref{thm:exact-Holder-exponent} uses the technique of divided differences. We briefly review the definition and basic properties. For a function $f$ and a finite list of distinct points $x_0,x_1,\dots,x_n$, the divided difference $f[x_0,x_1,\dots,x_n]$ is defined inductively as follows:
\[
f[x_i]:=f(x_i), \qquad i=0,1,\dots,n,
\]
and
\begin{align*}
f[x_i,\dots,x_{i+k}]&:=\frac{f[x_{i+1},\dots,x_{i+k}]-f[x_i,\dots,x_{i+k-1}]}{x_{i+k}-x_i},\\
&\qquad\qquad\qquad\qquad i=0,1,\dots,n-k, \quad k=1,2,\dots,n.
\end{align*}
Divided differences have some of the same properties as higher order derivatives. They are linear in $f$ and satisfy a mean value theorem. For the purposes of this article, the most important properties are the following:

\begin{lemma} \label{lem:divided-differences}
\begin{enumerate}[(i)]
\item If $f$ is a polynomial of degree less than $n$, then $f[x_0,x_1,\dots,x_n]=0$ for every choice of points $x_0,x_1,\dots,x_n$.
\item If $f$ is not a polynomial on $[a,b]$, then for every $n\in\NN$ there are points $x_0,x_1,\dots,x_n$ in $[a,b]$ such that $f[x_0,x_1,\dots,x_n]\neq 0$.
\end{enumerate}
\end{lemma}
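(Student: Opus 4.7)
Both parts flow from the Newton interpolation formula: if $p$ is the unique polynomial of degree at most $n$ that interpolates $f$ at distinct points $x_0,x_1,\dots,x_n$, then
\[
p(x)=\sum_{k=0}^n f[x_0,\dots,x_k]\prod_{j=0}^{k-1}(x-x_j),
\]
so $f[x_0,\dots,x_n]$ is exactly the coefficient of $x^n$ in $p$. I will take this as the operational definition and treat (i) and (ii) as two sides of the same coin.

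\textbf{Part (i).} Assume $f$ is a polynomial of degree less than $n$, and fix distinct $x_0,\dots,x_n$. Since $f$ itself is a polynomial of degree at most $n$ that agrees with $f$ at $x_0,\dots,x_n$, uniqueness of polynomial interpolation gives $p=f$. Thus $p$ has degree less than $n$, which forces the coefficient of $x^n$ in $p$, namely $f[x_0,\dots,x_n]$, to vanish.

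\textbf{Part (ii).} I will prove the contrapositive: if for some $n$ the divided difference $f[x_0,\dots,x_n]$ vanishes for every choice of $n+1$ distinct points in $[a,b]$, then $f$ is a polynomial of degree less than $n$ on $[a,b]$. Fix any $n$ distinct points $y_0,\dots,y_{n-1}\in[a,b]$ and let $p$ be the (unique) polynomial of degree less than $n$ interpolating $f$ at these points. For any $x\in[a,b]\setminus\{y_0,\dots,y_{n-1}\}$, let $q$ be the interpolating polynomial of degree at most $n$ for $f$ at $y_0,\dots,y_{n-1},x$. Its top coefficient is $f[y_0,\dots,y_{n-1},x]=0$ by hypothesis, so $q$ has degree less than $n$. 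By the uniqueness of interpolation at the $n$ points $y_0,\dots,y_{n-1}$ we get $q=p$, whence $f(x)=q(x)=p(x)$. Since $f$ and $p$ also agree at $y_0,\dots,y_{n-1}$, we conclude $f\equiv p$ on $[a,b]$, contradicting the assumption that $f$ is not a polynomial.

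\textbf{Main obstacle.} There is no real obstacle here: both parts are immediate once the Newton interpolation formula is invoked. The only mild care needed is to keep track of the two competing degree bounds (less than $n$ versus at most $n$) in the uniqueness step of part (ii), so that agreement on $n$ points forces full agreement of the two interpolants.
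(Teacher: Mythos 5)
Your proof is correct. Note, though, that the paper itself does not supply a proof of this lemma: it is stated as a standard property of divided differences (the surrounding text simply recalls that divided differences are linear and satisfy a mean-value theorem, and then lists the two assertions as ``the most important properties''), with the real technical input for the upper-bound argument coming from Dubuc's Lemma~\ref{lem:oscillation-bound}, for which the paper cites \cite[Lemma 12]{Dubuc}. So there is nothing in the paper to compare against; your argument fills an omitted, routine gap. The route you chose --- reading $f[x_0,\dots,x_n]$ off as the leading coefficient in the Newton form of the degree-$\le n$ interpolant --- is the standard one and works cleanly. Part (i) is immediate from uniqueness of interpolation. In part (ii) you correctly prove the contrapositive (and in fact the sharper conclusion that $f$ is a polynomial of degree less than $n$), and the uniqueness step is handled with the right care: since $q$ has degree $<n$ once its top coefficient vanishes, and $q$ agrees with the fixed interpolant $p$ at the $n$ points $y_0,\dots,y_{n-1}$, uniqueness of degree-$<n$ interpolation at $n$ nodes forces $q=p$, independent of the auxiliary point $x$. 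One could alternatively derive (i) from the mean-value form $f[x_0,\dots,x_n]=f^{(n)}(\zeta)/n!$ when $f\in C^n$, but your purely algebraic interpolation argument is cleaner and needs no smoothness hypothesis.
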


The next lemma, whose proof can be found in \cite[Lemma 12]{Dubuc}, is crucial for proving the upper bound on $\alpha_\phi^+(\xi)$.

\begin{lemma}[Dubuc] \label{lem:oscillation-bound}
Let $f:[a,b]\to\RR$ and let $x_0<x_1<\dots<x_N$ be an increasing sequence of $N+1$ distinct points in $[a,b]$. Then for any $x\in[a,b]$ there is an index $k\in\{0,1,\dots,N\}$ such that
\[
|f(x)-f(x_k)|\geq N!\left(\frac{\delta}{2}\right)^N|f[x_0,x_1,\dots,x_N]|,
\]
where $\delta:=\min\{x_k-x_{k-1}: k=1,\dots,N\}$.
\end{lemma}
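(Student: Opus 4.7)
The plan is to argue by contradiction using the Lagrange-style closed form for the divided difference,
$$f[x_0,\dots,x_N] = \sum_{k=0}^N \frac{f(x_k)}{\prod_{j\neq k}(x_k - x_j)}.$$
The first step is the standard observation that applying the same formula to the constant function $1$ gives $\sum_{k=0}^N 1/\prod_{j\neq k}(x_k-x_j) = 0$ (for $N\geq 1$ a constant function is a polynomial of degree less than $N$, so by Lemma \ref{lem:divided-differences}(i) its divided difference vanishes), so for any $x\in[a,b]$ we may subtract $f(x)$ inside each numerator to get
$$f[x_0,\dots,x_N] = \sum_{k=0}^N \frac{f(x_k) - f(x)}{\prod_{j\neq k}(x_k - x_j)}.$$
This identity is what converts a statement about a divided difference into a statement about the oscillations $|f(x_k) - f(x)|$.

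Next, suppose toward a contradiction that $|f(x) - f(x_k)| < M := N!(\delta/2)^N |f[x_0,\dots,x_N]|$ for every $k\in\{0,1,\dots,N\}$. Because the points are $\delta$-separated, one has $|x_k - x_j| \geq |k-j|\delta$ for all $j\neq k$, hence $\prod_{j\neq k}|x_k - x_j| \geq \delta^N\, k!(N-k)!$. Applying the triangle inequality to the displayed identity and then the binomial identity $\sum_{k=0}^N \binom{N}{k} = 2^N$ yields
$$|f[x_0,\dots,x_N]| < M\sum_{k=0}^N \frac{1}{\delta^N\, k!(N-k)!} = \frac{M}{\delta^N N!}\sum_{k=0}^N \binom{N}{k} = \frac{M\cdot 2^N}{\delta^N N!} = |f[x_0,\dots,x_N]|,$$
which is the sought contradiction. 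Hence at least one index $k$ must satisfy the required lower bound.

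There is no real obstacle here; the whole argument is a short combinatorial estimate built on the two identities above. The only detail worth flagging is the degenerate case $x=x_i$ for some $i$: then the hypothetical bound $|f(x)-f(x_i)|=0<M$ is trivially satisfied at that index, but this causes no issue because the assumed inequality is imposed at \emph{all} $k$ simultaneously, and the summation bound still carries through verbatim.
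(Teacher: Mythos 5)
The paper does not supply a proof of this lemma; it cites Dubuc~\cite[Lemma 12]{Dubuc}, so there is no internal argument to compare against. Your proof is correct and self-contained. The Lagrange closed form for the divided difference, the cancellation $\sum_{k=0}^N \prod_{j\neq k}(x_k-x_j)^{-1}=0$ (valid since $N\geq 1$, which the lemma's definition of $\delta$ already forces), the separation estimate $\prod_{j\neq k}|x_k-x_j|\geq \delta^N\,k!\,(N-k)!$ coming from $|x_k-x_j|\geq |k-j|\delta$, and the binomial summation $\sum_k \binom{N}{k}=2^N$ all check out, and chaining them gives $|f[x_0,\dots,x_N]|<|f[x_0,\dots,x_N]|$ exactly as you state. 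One sentence worth adding for completeness: if $f[x_0,\dots,x_N]=0$, then $M=0$ and the conclusion is trivially satisfied (any $k$ works, since $|f(x)-f(x_k)|\geq 0=M$), so one may assume $f[x_0,\dots,x_N]\neq 0$ before launching the contradiction; this keeps the argument from resting on the slightly awkward step of deriving $0<0$ from an already vacuous hypothesis. Your final remark about $x=x_i$ is correct but is really the same observation, since that forces $|f(x)-f(x_i)|=0\geq 0$, which can only clash with the contradiction hypothesis when $M=0$.
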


\begin{proof}[Proof of Theorem \ref{thm:exact-Holder-exponent} (upper bound)]
Here we show that $\alpha_\phi^+(\xi)\leq \gamma(\xi)$. We assume again that $K_1>0$ and $K_2>0$, so that $\gamma=\min\{\gamma_1,\gamma_2\}$ and the inequalities \eqref{eq:assumed-inequalities} hold. 

\bigskip
{\em Step 3.} We assume that $\gamma<1$ and show that $\phi\not\in \CC_+^\alpha(\xi)$ for any $\alpha>\gamma$. Note that $\gamma<1$ implies $\gamma=\gamma_1(\xi)$ by Lemma \ref{lem:gammas}. We begin by showing that $\phi\not\in \CC_+^\alpha(\xi)$ if $\alpha>\gamma_0$.

Since $\phi$ is not a polynomial, it is not linear on $[0,1]$ and so there exists $t_0\in(0,1)$ such that $\phi[0,t_0,1]\neq 0$. Set $g_n:=S_{k_1}\circ S_{k_2}\circ\dots\circ S_{k_n}$ for $n\in\NN$, and denote $\xi_n:=g_n(0)$, $\xi_n':=g_n(1)$ and $t_n:=g_n(t_0)$. From \eqref{eq:difference-expression}, we obtain
\[
\phi[\xi_n,t_n]=\sum_{m=1}^n \frac{c_{k_m}}{d_{k_m}}\prod_{k=1}^r \left(\frac{d_k}{a_k}\right)^{s_{k,m}}+\phi[0,t_0]\prod_{k=1}^r \left(\frac{d_k}{a_k}\right)^{s_{k,n}}
\]
and
\[
\phi[t_n,\xi_n']=\sum_{m=1}^n \frac{c_{k_m}}{d_{k_m}}\prod_{k=1}^r \left(\frac{d_k}{a_k}\right)^{s_{k,m}}+\phi[t_0,1]\prod_{k=1}^r \left(\frac{d_k}{a_k}\right)^{s_{k,n}},
\]
so that
\[
\phi[\xi_n,t_n,\xi_n']=\phi[0,t_0,1]\prod_{k=1}^r \left(\frac{d_k}{a_k^2}\right)^{s_{k,n}}.
\]
Note that $\xi\in(\xi_n,\xi_n')$.
Thus, by Lemma \ref{lem:oscillation-bound}, there is a point $w_n\in\{\xi_n,t_n,\xi_n'\}$ such that $|\phi(w_n)-\phi(\xi)|\geq C_1\prod_{k=1}^r|d_k|^{s_{k,n}}$ with $C_1:=\min\{t_0,1-t_0\}|\phi[0,t_0,1]|/2>0$. Hence,
\[
\frac{|\phi(w_n)-\phi(\xi)|}{|w_n-\xi|^\alpha}\geq \frac{|\phi(w_n)-\phi(\xi)|}{(\xi_n'-\xi_n)^\alpha}\geq C_1\prod_{k=1}^r \left(\frac{d_k}{a_k^\alpha}\right)^{s_{k,n}},
\]
so by the counterpart of \eqref{eq:product-estimate},
\[
\limsup_{x\searrow \xi} \frac{\phi(x)-\phi(\xi)}{(x-\xi)^\alpha}\geq \limsup_{n\to\infty} C_1 \prod_{k=1}^r \left(\frac{d_k}{a_k^\alpha}\right)^{s_{k,n}}=\infty
\]
for any $\alpha>\gamma_0$.

If $\chi_n(\xi)=0$ for all but finitely many $n$, then $\gamma=\gamma_0$ and we are done. Assume therefore that $\chi_n(\xi)=1$ for infinitely many $n$. There is then an increasing subsequence $(n_i)$ such that $\chi_{n_i}(\xi)=1$ for each $i$, and
\[
\lim_{i\to\infty}\frac{\sum_{k=1}^r s_{n_i,k}(\xi)\log|d_k|+K_1\chi_{n_i}(\xi)L_{n_i}(\xi)}{\sum_{k=1}^r s_{n_i,k}(\xi)\log a_k}=\gamma_1(\xi).
\]
Take $n=n_i$ and let $l:=L_n(\xi)$. Let $j$ be the integer such that $I_n(\xi)=I_{n,j}$. We may assume also that $l\geq 1$ (otherwise we simply scratch this $n$ from the subsequence $(n_i)$). Then the next basic interval $I_{n,j+1}$ has length $a_{k_1}\dots a_{k_{n-l-1}}a_{k_{n-l}+1}a_1^l$. Note that
\[
a_{k_1}\dots a_{k_{n-l-1}}a_{k_{n-l}+1}\geq a_{\min} a_{k_1}\dots a_{k_{n-l}}\geq a_{\min} a_{k_1}\dots a_{k_n}=a_{\min}|I_n(\xi)|.
\]
Let $p$ be the largest integer such that $a_{k_1}\dots a_{k_{n-l-1}}a_{k_{n-l}+1}a_1^{l+p}\geq a_{\min}|I_n(\xi)|$.
Then $l+p\geq 0$, so there is an integer $j'$ such that the basic interval $I_{n+p,j'}$ has length $|I_{n+p,j'}|=a_{k_1}\cdots a_{k_{n-l-1}}a_{k_{n-l}+1}a_1^{l+p}$ and is adjacent to (and to the right of) $I_n(\xi)$. Write $I_{n+p,j'}=[\xi_n',\xi_n'']$. Let $g_n'$ be the affine map satisfying $g_n'(0)=\xi_n'$ and $g_n'(1)=\xi_n''$. Put $t_n':=g_n'(t_0)$. Applying Lemma \ref{lem:oscillation-bound} to the interval $I_{n+p,j'}$ we see that there is a point $w_n\in\{\xi_n',t_n',\xi_n''\}$ such that $|\phi(w_n)-\phi(\xi_n')|\geq C_1\prod_{k=1}^r|d_k|^{s_{k,n+p}'}$, with $C_1$ as above. Then for at least one $x\in\{w_n,\xi_n'\}$,
\[
|\phi(x)-\phi(\xi)|\geq (C_1/2)\prod_{k=1}^r|d_k|^{s_{k,n+p}'}.
\]
Now, since $\alpha>\gamma_1(\xi)$, there is an $\eps>0$ and an integer $N$ such that if $n\in(n_i)$ and $n>N$, then $\sum s_{n,k}\log|d_k|+K_1 l>(\alpha-\eps)\sum s_{n,k}\log a_k$. This gives the dual to \eqref{eq:prime-prod-asymp}, namely
\begin{equation}
\prod_{k=1}^r |d_k|^{s_{n+p,k}'} \asymp e^{K_1 l}\prod_{k=1}^r|d_k|^{s_{n,k}}>\left(\prod_{k=1}^r a_k^{s_{n,k}}\right)^{\alpha-\eps}.
\label{eq:prime-prod-asymp-dual}
\end{equation}
Since
\begin{equation}
x-\xi\leq \xi_n''-\xi_n=|I_n(\xi)|+|I_{n+p,j'}|\leq \left(1+\frac{a_{\min}}{a_1}\right)|I_n(\xi)|\leq 2\prod_{k=1}^r a_k^{s_{n,k}},
\label{eq:x-difference-estimate}
\end{equation}
it follows that
\[
\frac{|\phi(x)-\phi(\xi)|}{(x-\xi)^\alpha}\geq C_2\left(\prod_{k=1}^r a_k^{s_{n,k}}\right)^{-\eps}=C_2|I_n(\xi)|^{-\eps}\to\infty,
\]
for some constant $C_2>0$. Thus, $\phi\not\in \CC_+^\alpha(\xi)$. This completes Step 3.

\bigskip

{\em Step 4.} We assume that $\gamma\geq 1$ and show that $\phi\not\in \CC_+^\alpha(\xi)$ for any $\alpha>\gamma$.

\medskip
{\em Case 1.} Suppose first that $\gamma=\gamma_2<\gamma_1$. Take $\alpha\in(\gamma,\gamma_1)$. From the work in Step 2 it follows that
\[
\frac{|\phi(x)-\phi(\xi)-D(\xi)(x-\xi)|}{(x-\xi)^\alpha}\to 0 \qquad \mbox{for $x\in I_n(\xi)$, as $n\to\infty$},
\]
with $D(\xi)$ defined as in \eqref{eq:the-derivative-of-phi}. Hence, if there is a polynomial $P(x)$ such that \eqref{eq:pointwise-Holder-property} holds, it must be the case that $P(x)=\phi(\xi)+D(\xi)(x-\xi)$. We show that this leads to a contradiction.

Let $(n_i)$ be an increasing sequence of positive integers such that
\[
\lim_{i\to\infty} \frac{\sum_{k=1}^r s_{n_i,k}(\xi)\log|d_k|+K_2\zeta_{n_i}(\xi)L_{n_i}(\xi)}{\sum_{k=1}^r s_{n_i,k}(\xi)\log a_k}=\gamma_2. 
\]
Since $\gamma_2<\gamma_0$, we may assume that for each $n\in(n_i)$, $\zeta_n(\xi)=1$ and so $k_{n-L_n(\xi)}\in\Lambda$. Moreover, $\limsup_{i\to\infty}L_{n_i}(\xi)/n_i>0$, so we may assume that $L_{n_i}(\xi)>l_0$ for all $i$, for some suitable number $l_0$ to be chosen later. Take $n\in(n_i)$, and put $l:=L_n(\xi)$ and $\nu:=n-l$. Choose $p$ and $j'$ as in Step 3, and write $I_{n+p,j'}=[\xi_n',\xi_n'']$. Recall that $I_{n+p,j'}$ is a basic interval adjacent to $I_n(\xi)$.

In Step 2 we derived for $x\in I_{n+p,j'}$ the expression
\begin{align*}
\phi(x)-\phi(\xi)-D(\xi)(x-\xi) &=(x-\xi)\sum_{m=n+1}^\infty \frac{c_{k_m}}{d_{k_m}}\prod_{k=1}^r\left(\frac{d_k}{a_k}\right)^{s_{m,k}}\\
&\qquad+(x-\xi_n')B_n \prod_{k=1}^r \left(\frac{d_k}{a_k}\right)^{s_{\nu,k}}+R_1+R_2,
\end{align*}
with $B_n$ as defined by \eqref{eq:def-of-Bn}.
Since $\alpha<\gamma_1$, we have
\[
(x-\xi)^{-\alpha}\left[(x-\xi)\sum_{m=n+1}^\infty \frac{c_{k_m}}{d_{k_m}}\prod_{k=1}^r\left(\frac{d_k}{a_k}\right)^{s_{m,k}}+R_1+R_2\right]\to 0, \qquad x\in I_{n+p,j'},
\]
as in Step 2. We show that the remaining term,
\[
(x-\xi)^{-\alpha}(x-\xi_n')B_n \prod_{k=1}^r \left(\frac{d_k}{a_k}\right)^{s_{\nu,k}},
\]
is unbounded if we take $x=\xi_n''$. Observe that by the choice of $p$, $\xi_n''-\xi_n'=|I_{n+p,j'}|\geq a_{\min}|I_n(\xi)|$, and similarly, $\xi_n''-\xi\leq |I_n(\xi)|+|I_{n+p,j'}|\leq 2|I_n(\xi)|$ as in \eqref{eq:x-difference-estimate}. Hence
\[
(\xi_n''-\xi)^{-\alpha}(\xi_n''-\xi_n')\geq C_3|I_n(\xi)|^{1-\alpha},
\]
where $C_3:=2^{-\alpha}a_{\min}>0$. We claim that $B_n$ is bounded away from zero when $k_\nu\in\Lambda$. To show this, we distinguish two cases:

(i) $|d_1|<a_1$. Here we can write $B_n$ as in \eqref{eq:B_n-summed}. We first show that $l+p$ is large when $l$ is large. From the choice of $p$ it follows that
\[
a_1^{l+p+1}<\frac{a_{\min}a_{k_1}\dots a_{k_n}}{a_{k_1}\dots a_{k_{\nu-1}}a_{k_\nu+1}}=\frac{a_{\min}a_{k_\nu}\dots a_{k_n}}{a_{k_\nu+1}}
\leq a_{k_\nu}\dots a_{k_n}\leq a_{\max}^{l+1},
\]
so that
\begin{equation}
l+p+1>\frac{\log a_{\max}}{\log a_1}(l+1).
\label{eq:l-plus-p-bound}
\end{equation}
Let
\[
C_\Lambda:=\min\left\{\left|\frac{c_{k+1}a_k}{a_{k+1}c_k}-\frac{c_k}{d_k}+\frac{c_1}{a_1-d_1} \frac{a_k d_{k+1}}{d_k a_{k+1}}-\frac{c_r}{a_r-d_r}\right|: k\in\Lambda\cap \II_+\right\}.
\]
Then $C_\Lambda>0$ by definition of $\Lambda$. Thus, since $\limsup L_{n_i}(\xi)/n_i>0$, we can assume in view of \eqref{eq:B_n-summed} and \eqref{eq:l-plus-p-bound} that $l$ is large enough so that $|B_n|\geq \frac12 C_\Lambda>0$.

(ii) $|d_1|\geq a_1$. Then $K_1\geq K_2$, so our assumption that $\gamma_2<\gamma_1$ implies that $d_{k_{n-L_n(\xi)}+1}=0$ for all sufficiently large $n$. So we may assume $d_{k_\nu+1}=0$. Then (see the note following \eqref{eq:Lambda}) $k_\nu\in\Lambda$ means that
\[
\frac{c_{k_\nu+1}a_{k_\nu}}{a_{k_\nu+1}c_{k_\nu}}-\frac{c_{k_\nu}}{d_{k_\nu}}-\frac{c_r}{a_r-d_r}\neq 0,
\]
and of course there are only finitely many possible values for this expression. Denote the smallest of these (in absolute value) by $C_\Lambda'$. At the same time, $B_n$ now simplifies to \eqref{eq:B_n-simplified}. Thus, we can assume $l$ is large enough so that $|B_n|\geq \frac12 |C_\Lambda'|>0$. This proves our claim.

It now remains to show that
\[
|I_n(\xi)|^{1-\alpha}\prod_{k=1}^r \left(\frac{d_k}{a_k}\right)^{s_{\nu,k}}\asymp \prod_{k=1}^r a_k^{(1-\alpha)s_{n,k}}\prod_{k=1}^r \left(\frac{d_k}{a_k}\right)^{s_{\nu,k}}\to\infty
\]
along $n\in(n_i)$. But this follows since
\begin{equation*}
\prod_{k=1}^r a_k^{(1-\alpha)s_{n,k}}\prod_{k=1}^r \left(\frac{d_k}{a_k}\right)^{s_{\nu,k}}
=\left(\frac{a_r}{|d_r|}\right)^l \prod_{k=1}^r \left(\frac{|d_k|}{a_k^\alpha}\right)^{s_{n,k}}
=e^{K_2 l}\prod_{k=1}^r \left(\frac{|d_k|}{a_k^\alpha}\right)^{s_{n,k}},
\end{equation*}
as in \eqref{eq:K2-exponential}, and $\alpha>\gamma_2$. We conclude that $\phi\not\in \CC_+^\alpha(\xi)$.

\bigskip
{\em Case 2.} Suppose on the other hand that $\gamma=\gamma_1$. Take $\alpha>\gamma$, and let $N$ be the smallest integer greater than $\gamma$. Since $\phi$ is not a polynomial, there is by Lemma \ref{lem:divided-differences}(ii) a partition $0=t_0<t_1<\dots<t_N=1$ of $[0,1]$ such that $\phi[t_0,t_1,\dots,t_N]\neq 0$. Let $g_n:=S_{k_1}\circ S_{k_2}\circ\dots\circ S_{k_n}$ for $n\in\NN$. Let $P$ be a polynomial of degree less than $N$ such that $P(\xi)=\phi(\xi)$, and set $R(x):=\phi(x)-P(x)$.

Assume first that $\gamma_1=\gamma_0$. Let $\xi_n^{(j)}:=g_n(t_j)$ for $j=0,1,\dots,N$. By Lemma \ref{lem:divided-differences}(i), $P[\xi_n^{(0)},\xi_n^{(1)},\dots,\xi_n^{(N)}]=0$, and so
\begin{equation}
R[\xi_n^{(0)},\xi_n^{(1)},\dots,\xi_n^{(N)}]=\phi[\xi_n^{(0)},\xi_n^{(1)},\dots,\xi_n^{(N)}]=\phi[t_0,t_1,\dots,t_N]\prod_{k=1}^r\left(\frac{d_k}{a_k^N}\right)^{s_{n,k}}.
\end{equation}
Applying Lemma \ref{lem:oscillation-bound} to the function $R$ and the partition $\xi_n^{(0)},\dots,\xi_n^{(N)}$, we see that there is a point $w_n\in\{\xi_n^{(0)},\dots,\xi_n^{(N)}\}$ such that
\[
|R(w_n)|\geq C_4\prod_{k=1}^r|d_k|^{s_{n,k}},
\]
where $C_4:=N!(\delta/2)^N|\phi[t_0,t_1,\dots,t_N]|>0$, with $\delta=\min_j (t_j-t_{j-1})$. Hence
\begin{equation*}
\limsup_{x\searrow\xi}\frac{|R(x)|}{(x-\xi)^\alpha} \geq \limsup_{n\to\infty} \frac{|R(w_n)|}{|I_n(\xi)|^\alpha}
\geq C_4\limsup_{n\to\infty} \prod_{k=1}^r\left(\frac{d_k}{a_k^\alpha}\right)^{s_{n,k}}=\infty,
\end{equation*}
since $\alpha>\gamma_0$.

Assume next that $\gamma_1<\gamma_0$. Then there is a subsequence $(n_i)$ such that for each $n\in(n_i)$, $\chi_n(\xi)=1$. Take such an $n$. We now proceed as in the second case of Step 3, choosing the integer $p$ and the interval $I_{n+p,j'}$ in the same way and letting $g_n'$ be the affine map which maps $[0,1]$ onto $I_{n+p,j'}$ without reflections. Now set $\xi_n^{(j)}:=g_n'(t_j)$ for $j=0,1,\dots,N$. This time we apply Lemma \ref{lem:oscillation-bound} to the function $R$ and the partition $\xi_n^{(0)},\dots,\xi_n^{(N)}$ of $I_{n+p,j'}$ to obtain a point $w_n\in \{\xi_n^{(0)},\dots,\xi_n^{(N)}\}$ such that
\[
|R(w_n)|\geq C_4\prod_{k=1}^r|d_k|^{s_{n+p,k}'}.
\]
Precisely as in Step 3 (see \eqref{eq:prime-prod-asymp-dual} and beyond), it now follows that
\[
\limsup_{n\to\infty}\frac{|R(w_n)|}{(w_n-\xi)^\alpha}=\infty.
\]
Thus, $\phi\not\in \CC_+^\alpha(\xi)$. This completes the proof.
\end{proof}

\begin{corollary} \label{cor:minimum}
If $\xi\not\in\TT$, then $\alpha_\phi(\xi)=\min\{\alpha_\phi^+(\xi),\alpha_\phi^-(\xi)\}$.
\end{corollary}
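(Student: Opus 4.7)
My plan is to prove the two inequalities separately. The inequality $\alpha_\phi(\xi)\leq\min\{\alpha_\phi^+(\xi),\alpha_\phi^-(\xi)\}$ is immediate: any polynomial $P$ and constant $C$ witnessing the two-sided Hölder property \eqref{eq:pointwise-Holder-property} automatically witness both one-sided versions with the same $P$ and $C$.

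For the nontrivial direction, I set $m:=\min\{\alpha_\phi^+(\xi),\alpha_\phi^-(\xi)\}$, fix an arbitrary $\alpha<m$, and aim to exhibit a single polynomial $P$ of degree less than $\alpha$ and a constant $C$ so that $|\phi(x)-P(x)|\leq C|x-\xi|^\alpha$ on $[0,1]$. The heart of the argument is the observation that the polynomial used on each side can always be taken to have degree at most $1$, and that the degree-$\leq 1$ polynomials from the two sides are forced to coincide.

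Concretely, if $\alpha\leq 1$, then the one-sided polynomials must be constants, and by continuity of $\phi$ each such constant must equal $\phi(\xi)$; I simply take $P\equiv\phi(\xi)$. If $\alpha>1$, then $\alpha_\phi^\pm(\xi)>1$, and I invoke Theorem \ref{thm:exact-Holder-exponent} together with Remark \ref{rem:left-Holder-exponent} to conclude $\phi_+'(\xi)=\phi_-'(\xi)=D(\xi)$, with $D(\xi)$ the series in \eqref{eq:right-derivative}. Setting $P(x):=\phi(\xi)+D(\xi)(x-\xi)$, Step 2 of the proof of Theorem \ref{thm:exact-Holder-exponent} delivers $|\phi(x)-P(x)|=o(|x-\xi|^\alpha)$ as $x\to\xi^+$, and the symmetric left-sided argument outlined in Remark \ref{rem:left-Holder-exponent} yields the same estimate as $x\to\xi^-$. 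Combining these two bounds near $\xi$ with a trivial constant upper bound away from $\xi$ (absorbed by enlarging $C$) gives \eqref{eq:pointwise-Holder-property} on all of $[0,1]$. Since $\alpha<m$ was arbitrary, $\alpha_\phi(\xi)\geq m$.

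The main potential obstacle is the concern that, when $\alpha>2$, one might need to match second- or higher-order Peano-type coefficients between the two sides in order to use a common polynomial of higher degree. This is sidestepped by the observation that the estimates produced in the proof of Theorem \ref{thm:exact-Holder-exponent} are already $o(|x-\xi|^\alpha)$ using only the linear polynomial $\phi(\xi)+D(\xi)(x-\xi)$, no matter how large $\alpha$ is below $m$; hence any hypothetical mismatch in higher-order one-sided expansions simply does not enter the argument.
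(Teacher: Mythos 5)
Your proof is correct and follows essentially the same route as the paper's: both rest on the observation (from Theorem \ref{thm:exact-Holder-exponent}, Remark \ref{rem:left-Holder-exponent}, and Step~2 of its proof) that the approximating polynomial on each side is at most linear — $\phi(\xi)$ when $\alpha\le 1$, and $\phi(\xi)+D(\xi)(x-\xi)$ when $\alpha>1$ — and that these one-sided polynomials coincide because $\phi_+'(\xi)=\phi_-'(\xi)=D(\xi)$. Your explicit remark that the linear polynomial already yields $o(|x-\xi|^\alpha)$ even when $\alpha>2$, so no higher-order Peano matching is required, is a useful clarification of a point the paper leaves implicit.
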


\begin{proof}
The proof of Theorem \ref{thm:exact-Holder-exponent} shows that the polynomial $P$ approximating $\phi(x)$ for $x>\xi$ is given by $P(x)=\phi(\xi)$ for $\alpha\leq 1$, and by $P(x)=\phi(\xi)+D(\xi)(x-\xi)$ for $\alpha>1$, where $D(\xi)$ is given by \eqref{eq:the-derivative-of-phi}. The proof shows only that $D(\xi)$ is the right derivative of $\phi$ at $\xi$. A completely analogous argument shows, however, that when $\alpha_\phi^-(\xi)>1$, $D(\xi)$ is also the left derivative of $\phi$ at $\xi$, and the polynomial approximating $\phi(x)$ for $x<\xi$ is the same as that for $x>\xi$. Hence, $\alpha_\phi(\xi)=\min\{\alpha_\phi^+(\xi),\alpha_\phi^-(\xi)\}$.
\end{proof}

\begin{remark} \label{rem:cut-points}
{\rm
The situation is different for points of $\TT$. Take $\xi\in\TT$; then $\xi$ has two codings $(k_i^-)=(k_1,\dots,k_{n_0},r,r,r,\dots)$ and $(k_i^+)=(k_1,\dots,k_{n_0}+1,1,1,1,\dots)$. It is straightforward to verify that $\alpha_\phi^+(\xi)=\log|d_1|/\log a_1$ and $\alpha_\phi^-(\xi)=\log|d_r|/\log a_r$. Let us assume that $|d_1|<a_1$ and $|d_r|<a_r$, so that $\alpha_\phi^+(\xi)>1$ and $\alpha_\phi^-(\xi)>1$. Then $\phi_+'(\xi)$ exists and is given by \eqref{eq:the-derivative-of-phi} applied to the coding $(k_i^+)$. Similarly, the left derivative $\phi_-'(\xi)$ is given by \eqref{eq:the-derivative-of-phi} applied to the coding $(k_i^-)$. Working out the two infinite series, one finds that $\phi_+'(\xi)=\phi_-'(\xi)$ if and only if $k_{n_0}\not\in\Lambda$. When that is the case, $\phi$ is differentiable at $\xi$ and $\alpha_\phi(\xi)=\min\{\alpha_\phi^+(\xi),\alpha_\phi^-(\xi)\}$. But if $k_{n_0}\in\Lambda$, then $\phi$ is not differentiable at $\xi$ and $\alpha_\phi(\xi)=1<\min\{\alpha_\phi^+(\xi),\alpha_\phi^-(\xi)\}$. 
}
\end{remark}

\section{Proof of the lower bound} \label{sec:lower-bound}

Here we assume first that we are in the setting of Theorem \ref{thm:main}\,({\em b}); that is, $|d_k|<a_k$ for all $k$, and $\Lambda\neq\emptyset$. 
Without loss of generality we may assume that there is an element $k^*$ of $\Lambda$ such that $d_{k^*}\neq 0$. (If this is not the case, then there is a $k^*\in\Lambda$ such that $d_{k^*+1}\neq 0$, and we reverse the roles of $\alpha_\phi^+$ and $\alpha_\phi^-$ in what follows; see Remark \ref{rem:left-Holder-exponent}.) Note that $k^*<r$.

We begin by constructing particular sets of sequences in $\{1,2,\dots,r\}^\NN$. Fix $\lambda\in(0,1)$ and set $\tau:=\lambda/(1-\lambda)$. Set $n_0:=-1$. Choose sequences $(n_j)_{j\in\NN}$ and $(l_j)_{j\in\NN}$ of positive integers such that $n_{j-1}+1<n_j-l_j$ for each $j$,
\begin{equation}
\lim_{j\to\infty}\frac{l_j}{n_j}=\lambda, 
\label{eq:L-proportion}
\end{equation}
and
\begin{equation}
\lim_{j\to\infty}\frac{n_{j-1}}{n_j}=0. 
\label{eq:fast-growth}
\end{equation}
These two assumptions together imply
\begin{equation}
\frac{1}{l_j}\sum_{i=1}^{j-1}l_i \to 0 \qquad\mbox{as $j\to\infty$},
\label{eq:l-growth-property}
\end{equation}
a fact we will use repeatedly. We partition $\NN$ into the four sets
\begin{gather*}
J_1:=\NN\cap \bigcup_{j=1}^\infty [n_{j-1}+2,n_j-l_j-1],\\
J_2:=\{n_j-l_j:j\in\NN\},\\
J_3:=\NN\cap \bigcup_{j=1}^\infty [n_j-l_j+1,n_j],\\
J_4:=\{n_j+1: j\in\NN\}.
\end{gather*}
Let $E_\lambda$ be the set of all sequences $(k_i)$ in $\{1,2,\dots,r\}^\NN$ for which $k_i=k^*$ if $i\in J_2\cup J_4$, and $k_i=r$ if $i\in J_3$. For a given sequence $(k_i)\in E_\lambda$, let
\[
L_n^+:=\max\{j\in\NN: k_{n-j+1}=k_{n-j+2}=\dots=k_n=r\},
\]
or $L_n^+=0$ if $k_n<r$; and 
\[
L_n^-:=\max\{j\in\NN: k_{n-j+1}=k_{n-j+2}=\dots=k_n=1\},
\]
or $L_n^-=0$ if $k_n>1$. We furthermore define
\[
s_{n,k}':=\#\{i\leq n: i\in J_1\ \mbox{and}\ k_i=k\}, \qquad k=1,2,\dots,r
\]
and $N_n:=\#\{i\leq n: i\in J_1\}$. For $n\in J_1$ and $j\in\NN$, let $\nu(n,j)$ be the index $\nu\in\NN$ such that $\#\{i\in J_1: \nu<i\leq n\}=j$ (or $\nu(n,j)=0$ if no such $\nu$ exists), and define
\[
\tilde{L}_n^+:=\max\{j\geq 1: k_i=r\ \mbox{for}\ \nu(n,j)<i\leq n, i\in J_1\},
\]
or $\tilde{L}_n^+=0$ if $k_n<r$. Thus, $\tilde{L}_n^+$ is the run length of the digit $r$ ending at the $n$th digit if digits whose index falls outside $J_1$ are ignored. Similarly, define 
\[
\tilde{L}_n^-:=\max\{j\geq 1: k_i=1\ \mbox{for}\ \nu(n,j)<i\leq n, i\in J_1\},
\]
or $\tilde{L}_n^+=0$ if $k_n>1$.

Next, for a probability vector $\mathbf{p}=(p_1,\dots,p_r)\in \Delta_r^0$, we let $\mu_{\mathbf{p},\lambda}$ denote the corresponding Bernoulli measure on $E_\lambda$. That is, the unique Borel measure on $E_\lambda$ (endowed with the product topology) under which $\{k_i: i\in J_1\}$ are independent random variables such that $k_i=k$ with probability $p_k$, for $k=1,\dots,r$. 

Fix $\alpha>1$, and assume $\mathbf{p}$ satisfies
\begin{equation}
\sum p_k(\log|d_k|-\alpha\log a_k)=\tau(\alpha-1)\log a_r.
\label{eq:p-tau-constraint}
\end{equation}
Let $E_{\mathbf{p},\lambda}$ be the set of sequences $(k_i)\in E_\lambda$ satisfying
\begin{gather}
\frac{s_{n,k}'}{N_n}\to p_k, \quad k=1,2,\dots,r, \label{eq:strong-law}\\
\frac{\tilde{L}_n^+}{N_n}\to 0, \label{eq:short-runs-r}\\
\frac{\tilde{L}_n^-}{N_n}\to 0. \label{eq:short-runs-1}
\end{gather}
Then $\mu_{\mathbf{p},\lambda}(E_{\mathbf{p},\lambda})=1$ by the strong law of large numbers and the well known fact that almost surely, maximum run lengths of digits grow at most logarithmically. Observe also that for $(k_i)\in E_{\mathbf{p},\lambda}$, \eqref{eq:short-runs-r} and \eqref{eq:short-runs-1} imply
\begin{equation}
\lim_{n\to\infty} \frac{L_n^-}{n}=0 \qquad\mbox{and}\qquad \lim_{n\to\infty, n\in J_1} \frac{L_n^+}{n}=0
\label{eq:L-limits}
\end{equation}
by the construction of $E_\lambda$, since $L_n^-\leq\tilde{L}_n^-+1$ for all $n\in\NN$, and $L_n^+\leq\tilde{L}_n^+$ for all $n\in J_1$.



Let $\pi:\{1,2,\dots,r\}\to [0,1]$ be the projection map given by
\[
\pi(k_1,k_2,\dots):=\lim_{n\to\infty} S_{k_1}\circ \dots \circ S_{k_n}(0),
\]
and let $\tilde{\mu}_{\mathbf{p},\lambda}:=\mu_{\mathbf{p},\lambda}\circ \pi^{-1}$ be the pushforward of $\mu_{\mathbf{p},\lambda}$. 

\begin{lemma}
We have
\[
\pi(E_{\mathbf{p},\lambda})\subset \{\xi\in(0,1)\backslash \mathcal{T}: \alpha_\phi^+(\xi)=\alpha\ \mbox{and}\ \alpha_\phi^-(\xi)\geq \alpha\} \subset E_\phi(\alpha).
\]
\end{lemma}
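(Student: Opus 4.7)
The plan is to apply Theorem \ref{thm:exact-Holder-exponent} and Remark \ref{rem:left-Holder-exponent} to any $\xi = \pi((k_i))$ with $(k_i) \in E_{\mathbf{p},\lambda}$, compute $\alpha_\phi^+(\xi)$ exactly and bound $\alpha_\phi^-(\xi)$ from below, and then invoke Corollary \ref{cor:minimum} for the second inclusion. First I will verify that $\xi \in (0,1)\setminus\TT$: the coding contains infinitely many digits equal to $r$ (on $J_3$) and infinitely many equal to $k^* < r$ (on $J_2 \cup J_4$), so it is not eventually constant. Every $k_i$ lies in $\II_+$, since $\mathbf{p} \in \Delta_r^0$ forces the random draws into $\II_+$, $k^* \in \II_+$ by our choice of $k^*$, and $r \in \II_+$ under the standing assumption $K_2 > 0$. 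Thus Theorem \ref{thm:exact-Holder-exponent} applies and $\alpha_\phi^+(\xi) = \min\{\gamma_0, \gamma_1, \gamma_2\}$.

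The core calculation is to show $\gamma_2(\xi) = \alpha$. Set $\bar D := \sum p_k \log|d_k|$ and $\bar A := \sum p_k \log a_k$. At $n = n_j$ one has $L_{n_j} = l_j$ and $k_{n_j - l_j} = k^* \in \Lambda$, so $\zeta_{n_j} = 1$. Combining \eqref{eq:strong-law}, \eqref{eq:L-proportion} and \eqref{eq:fast-growth} with the algebraic cancellation $\log|d_r| + K_2 = \log a_r$, the numerator and denominator of the $\gamma_2$ ratio divided by $n_j$ tend to $(1-\lambda)\bar D + \lambda \log a_r$ and $(1-\lambda)\bar A + \lambda \log a_r$ respectively; substituting \eqref{eq:p-tau-constraint} in the form $\bar D = \alpha\bar A + \tau(\alpha-1)\log a_r$ (with $\tau = \lambda/(1-\lambda)$) collapses this ratio to $\alpha$ exactly. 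For the matching lower bound $\gamma(\xi) \geq \alpha$, I would examine each of $\gamma_0, \gamma_1, \gamma_2$ on the three relevant kinds of subsequence. Within $J_1$ the averages tend to $\bar D$ and $\bar A$ by \eqref{eq:fast-growth} and \eqref{eq:strong-law}, yielding the ratio $\bar D / \bar A = \alpha + \tau(\alpha-1)\log a_r / \bar A > \alpha$ (using $\alpha > 1$ and the sign of $\bar A$); within $J_3$ the ratios are monotonic functions of the position parameter $u = m/N_n$ in the block, with endpoint values $\bar D/\bar A$ and (for $\gamma_2$) exactly $\alpha$, as follows from a short derivative calculation based on $\bar D - \bar A = (\alpha-1)(\bar A + \tau\log a_r) < 0$. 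For $\gamma_1$ the analogous computation uses $K_1 < K_2$ (which holds because $|d_1| < a_1$ forces $\rho_1 > 1$, so $K_1 = (\rho_1 - \rho_r)\log a_r < K_2$), and the short-run hypothesis \eqref{eq:short-runs-r} makes the $K_1\chi_n L_n$ correction $o(n)$ on $J_1$.

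For the left H\"older exponent, Remark \ref{rem:left-Holder-exponent} translates everything into analogues $\gamma_0^-, \gamma_1^-, \gamma_2^-$ involving $L_n^-$, the maximal run of digits $1$ ending at $n$. Since $E_\lambda$ never forces a digit equal to $1$ outside $J_1$, one has $L_n^-(\xi) \leq \tilde L_n^-(\xi) + 1$, and \eqref{eq:short-runs-1} gives $L_n^-/n \to 0$; hence the $K_1^-, K_2^-$ corrections vanish in the liminf and $\gamma^-(\xi) = \gamma_0(\xi) \geq \alpha$, so $\alpha_\phi^-(\xi) \geq \alpha$. Corollary \ref{cor:minimum} then gives $\alpha_\phi(\xi) = \alpha$, proving the second inclusion. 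The main technical obstacle I anticipate is the joint monotonicity-in-$u$ analysis inside the $J_3$ blocks required to rule out any dip of $\gamma_0$ or $\gamma_1$ below $\alpha$ in the interior, together with the case split in $\gamma_1$ depending on whether $\chi_{n_j}(\xi)$ equals $0$ or $1$; both should reduce to routine sign checks once the identity $\bar D = \alpha \bar A + \tau(\alpha-1)\log a_r$ and the inequality $K_1 < K_2$ are in hand.
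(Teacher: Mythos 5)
Your proposal follows the same overall route as the paper's own proof: apply Theorem \ref{thm:exact-Holder-exponent} to $\xi=\pi((k_i))$, compute $\gamma_2(\xi)=\alpha$ along the subsequence $n=n_j$ using the cancellation $\log|d_r|+K_2=\log a_r$ and the constraint \eqref{eq:p-tau-constraint}, argue that this liminf is not undercut elsewhere, observe that \eqref{eq:short-runs-1} makes $\alpha_\phi^-(\xi)=\gamma_0(\xi)\geq\alpha$, and conclude via Corollary \ref{cor:minimum}. In fact you are somewhat more careful than the paper: the paper simply asserts that the liminf is attained along $n=n_j$ (justifying this only with $L_n^+/n\to 0$ on $J_1$), whereas you propose the M\"obius-monotonicity argument inside $J_3$ blocks that actually rules out any dip below $\alpha$ in the interior, and you also spell out why $\gamma_0,\gamma_1\geq\gamma_2$ using $K_1<K_2$. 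These are the right things to check.

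One small imprecision: your claim that ``within $J_1$ the averages tend to $\bar D$ and $\bar A$'' is not uniform over $J_1$. For $n$ just past $n_{j-1}+1$ (the start of a $J_1$ block), the contribution from the preceding $J_3$ run is not negligible: $\sum_{i<j}l_i\sim l_{j-1}$ and $N_n\sim n_{j-1}-l_{j-1}$, so $l_{j-1}/N_n\to\tau\neq 0$. The $\gamma_0$-ratio on $J_1$ therefore interpolates between $\bar D/\bar A$ (at the end of blocks) and $(\bar D+\tau\log|d_r|)/(\bar A+\tau\log a_r)$ (at the start); both are $>\alpha$ and the interpolation is monotone, so your conclusion $\gamma_0(\xi)\geq\alpha$ stands, but the statement as written is not accurate for every $n\in J_1$. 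This does not affect the validity of the argument.
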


\begin{proof}
Since $|d_1|<a_1$ and $|d_r|<a_r$, it follows that $K_2\geq \max\{K_1,0\}$. Let $(k_i)\in E_{\mathbf{p},\lambda}$ and $\xi:=\pi((k_i))$. Clearly, $\xi\not\in\mathcal{T}$. By Theorem \ref{thm:exact-Holder-exponent},
\[
\alpha_\phi^+(\xi)=\gamma_2=\liminf_{n\to\infty}\frac{\sum_{k=1}^r s_{n,k}\log|d_k|+K_2\zeta_n L_n^+}{\sum_{k=1}^r s_{n,k}\log a_k}.
\]
Since $L_n^+/n\to 0$ along $n\in J_1$, this $\liminf$ is attained along $n=n_j$. Suppose $n=n_j$; then $L_n^+=l_j$ and so $k_{n-L_n^+}=k_{n_j-l_j}=k^*\in\Lambda$. Hence $\zeta_n=1$. Observe also that
\begin{align*}
\sum_{k=1}^r s_{n,k}\log|d_k| &= \sum_{k=1}^r s_{n,k}'\log|d_k|+\sum_{i=1}^j l_i \log|d_r|+(2j-1)\log|d_{k^*}|\\
&=\sum_{k=1}^r s_{n,k}'\log|d_k|+\sum_{i=1}^j l_i \log|d_r|+O(j)\\
&=\sum_{k=1}^r s_{n,k}'\log|d_k|+l_j \log|d_r|+o(l_j),
\end{align*}
and similarly,
\[
\sum_{k=1}^r s_{n,k}\log a_k=\sum_{k=1}^r s_{n,k}'\log a_k+l_j\log a_r+o(l_j),
\]
using \eqref{eq:l-growth-property}, which also implies that $j/l_j\to 0$ as $j\to\infty$. As a result,
\begin{align*}
\frac{\sum_{k=1}^r s_{n,k}\log|d_k|+K_2\zeta_n L_n^+}{\sum_{k=1}^r s_{n,k}\log a_k} 
&=\frac{\sum_{k=1}^r s_{n,k}'\log|d_k|+l_j\log a_r+o(l_j)}{\sum_{k=1}^r s_{n,k}'\log a_k+l_j\log a_r+o(l_j)}\\
&\to\frac{\sum_{k=1}^r p_k\log|d_k|+\tau\log a_r}{\sum_{k=1}^r p_k\log a_k+\tau\log a_r}=\alpha,
\end{align*}
where the first equality uses the definition of $K_2$, the last equality follows from \eqref{eq:p-tau-constraint}, and the convergence follows from \eqref{eq:strong-law} after dividing numerator and denominator by $N_{n_j}$, noting that $N_{n_j}=n_j-\sum_{i=1}^j l_i-(2j-1)\sim n_j-l_j$ by \eqref{eq:l-growth-property}, so that $l_j/N_{n_j}\sim l_j/(n_j-l_j)\to\lambda/(1-\lambda)=\tau$. Thus, $\alpha_\phi^+(\xi)=\alpha$.


Likewise, by \eqref{eq:short-runs-1} and the analog of Theorem \ref{thm:exact-Holder-exponent} for $\alpha_\phi^-$ (see Remark \ref{rem:left-Holder-exponent}) we have $\alpha_\phi^-(\xi)=\gamma_0(\xi)\geq \gamma_2(\xi)$. These two observations yield the first inclusion of the lemma; the second follows from Corollary \ref{cor:minimum}.
\end{proof}

We shall need the following lemma (see \cite[Proposition 4.9]{Falconer}), in which $B(x,\rho)$ denotes the open ball centered at $x$ with radius $\rho$, and $\mathcal{H}^s$ denotes $s$-dimensional Hausdorff measure.

\begin{lemma} \label{lem:Falconer}
Let $\mu$ be a mass distribution on $\RR^n$, let $F\subset \RR^n$ be a Borel set and let $0<c<\infty$ be a constant. If 
\[
\limsup_{\rho\to 0} \frac{\mu(B(x,\rho))}{\rho^s}<c \qquad\mbox{for all $x\in F$},
\]
then $\mathcal{H}^s(F)\geq \mu(F)/c$.
\end{lemma}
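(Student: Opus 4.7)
The plan is to invoke the classical mass-distribution argument. The only subtlety is that the hypothesis is a pointwise $\limsup$ condition, so the scale at which $\mu(B(x,\rho))\leq c\rho^s$ holds may depend on $x$; I would handle this by stratifying $F$ by scale.

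First I would set, for each $k\in\NN$,
\[
F_k := \{x\in F : \mu(B(x,\rho))\leq c\rho^s\ \text{for all}\ 0<\rho\leq 1/k\}.
\]
The hypothesis $\limsup_{\rho\to 0}\mu(B(x,\rho))/\rho^s<c$ supplies for each $x\in F$ a scale $\rho_0(x)>0$ beneath which the inequality holds, so $x\in F_k$ once $1/k<\rho_0(x)$. Hence $F=\bigcup_{k=1}^\infty F_k$ with $F_k$ increasing. Borel measurability of $F_k$ follows from the measurability of $x\mapsto\mu(B(x,\rho))$ for fixed $\rho$, but is not strictly needed if one works with outer measure.

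Next, I would prove that $\mathcal{H}^s(F_k)\geq \mu(F_k)/c$ for each $k$. Fix $\delta\in(0,1/k)$ and let $\{U_i\}$ be any $\delta$-cover of $F_k$. For each $U_i$ meeting $F_k$ select a point $x_i\in U_i\cap F_k$; since $x_i\in U_i$ one has $U_i\subset \overline{B}(x_i,|U_i|)$ with $|U_i|$ the diameter, and a short thickening argument (applying the $F_k$ condition to $B(x_i,|U_i|+\eps)$ and letting $\eps\to 0$) yields $\mu(U_i)\leq c|U_i|^s$. Summing over $i$ gives
\[
\mu(F_k)\leq \sum_i \mu(U_i\cap F_k)\leq c\sum_i |U_i|^s,
\]
and taking the infimum over $\delta$-covers produces $\mu(F_k)\leq c\,\mathcal{H}^s_\delta(F_k)$. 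Letting $\delta\to 0$ gives $\mu(F_k)\leq c\,\mathcal{H}^s(F_k)\leq c\,\mathcal{H}^s(F)$.

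Finally, continuity of $\mu$ from below (using $F_k\nearrow F$) delivers $\mu(F_k)\to\mu(F)$, so $\mathcal{H}^s(F)\geq \mu(F)/c$. The only genuine technical point is the open-versus-closed-ball comparison $\mu(U_i)\leq c|U_i|^s$; this is the step I would be most careful with, but since the defining condition of $F_k$ applies to every radius up to $1/k$, one may choose a radius slightly larger than $|U_i|$ and pass to the limit, making the issue routine. Everything else is bookkeeping with the definition of Hausdorff measure.
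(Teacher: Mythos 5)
Your proof is correct and is essentially the standard mass-distribution-principle argument; the paper itself does not prove this lemma but simply cites \cite[Proposition~4.9]{Falconer}, whose proof uses the same stratification $F_\delta=\{x\in F:\mu(B(x,r))\leq cr^s \text{ for all } r\leq\delta\}$ and the same cover-by-balls estimate. The only cosmetic difference is that Falconer takes $\delta$-covers of $F$ and uses $\HH^s_\delta(F)\leq\HH^s(F)$, while you cover $F_k$ and use $\HH^s(F_k)\leq\HH^s(F)$; your handling of the open/closed-ball thickening is correct and fills in a detail that Falconer glosses over.
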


\begin{lemma} \label{lem:mass-upper-bound}
Let $\mathbf{p}\in\Delta_r^0$ satisfy \eqref{eq:p-tau-constraint}, and put
\[
s(\mathbf{p}):=\frac{(\alpha-1)\sum p_k\log p_k}{\sum p_k(\log|d_k|-\log a_k)}.
\]
Then for any $\xi\in\pi(E_{\mathbf{p},\lambda})$ and any $\eps>0$,
\[
\limsup_{\rho\to 0}\frac{\tilde{\mu}_{\mathbf{p},\lambda}(B(\xi,\rho))}{\rho^{s(\mathbf{p})-\eps}}=0.
\]
\end{lemma}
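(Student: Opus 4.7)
The plan is to bound $\tilde{\mu}_{\mathbf{p},\lambda}(I_n(\xi))$ using the explicit Bernoulli structure of $\mu_{\mathbf{p},\lambda}$, and then pass from $I_n(\xi)$ to $B(\xi,\rho)$ by a Moran covering, exploiting the forced-digit structure of $E_\lambda$. Since $\mu_{\mathbf{p},\lambda}$ is a product of independent $\mathbf{p}$-distributed draws on $J_1$ and atomic on $J_2\cup J_3\cup J_4$, for any $\xi\in\pi(E_{\mathbf{p},\lambda})$ we have
\[
\tilde{\mu}_{\mathbf{p},\lambda}(I_n(\xi))=\prod_{k=1}^r p_k^{s_{n,k}'(\xi)}\qquad\text{and}\qquad |I_n(\xi)|=\prod_{k=1}^r a_k^{s_{n,k}(\xi)},
\]
where the dominant correction is $s_{n,r}(\xi)-s_{n,r}'(\xi)=r_n:=|J_3\cap[1,n]|$, and the contributions of the forced $k^*$-positions in $(J_2\cup J_4)\cap[1,n]$ are negligible relative to $N_n$.

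Using \eqref{eq:p-tau-constraint} to rewrite $s(\mathbf{p})=(\sum p_k\log p_k)/(\sum p_k\log a_k+\tau\log a_r)$ and the strong law \eqref{eq:strong-law} $(s_{n,k}'/N_n\to p_k)$, one computes
\[
\frac{-\log\tilde{\mu}_{\mathbf{p},\lambda}(I_n(\xi))}{N_n}\longrightarrow-\sum p_k\log p_k,\qquad \frac{-\log|I_n(\xi)|}{N_n}=-\sum p_k\log a_k-\frac{r_n}{N_n}\log a_r+o(1).
\]
The ratio $r_n/N_n$ lies asymptotically in $[0,\tau]$ and attains its supremum $\tau$ precisely along $n=n_j$ by \eqref{eq:L-proportion} and \eqref{eq:l-growth-property} (where $r_{n_j}\sim l_j\sim\lambda n_j$ and $N_{n_j}\sim(1-\lambda)n_j$). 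Consequently $\liminf_{n\to\infty}\log\tilde{\mu}_{\mathbf{p},\lambda}(I_n(\xi))/\log|I_n(\xi)|=s(\mathbf{p})$, and for every $\eps>0$ all sufficiently large $n$ satisfy
\[
\log\tilde{\mu}_{\mathbf{p},\lambda}(I_n(\xi))-(s(\mathbf{p})-\eps)\log|I_n(\xi)|\leq-\tfrac{\eps}{2}\bigl|\log|I_n(\xi)|\bigr|\longrightarrow -\infty,
\]
so $\tilde{\mu}_{\mathbf{p},\lambda}(I_n(\xi))=o(|I_n(\xi)|^{s(\mathbf{p})-\eps})$.

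To pass to $B(\xi,\rho)$, let $n=n(\rho)$ be the minimal integer with $|I_n(\xi)|\leq\rho$, so that $|I_n(\xi)|\in(a_{\min}\rho,\rho]$. Partition $[0,1]$ into Moran cells at scale $\rho$ (basic intervals $I_{n(x)}(x)$ of length in $(a_{\min}\rho,\rho]$, where $n(x)$ is minimal with $|I_{n(x)}(x)|\leq\rho$); then $B(\xi,\rho)$ meets at most $\lceil 2/a_{\min}\rceil+1$ Moran cells. Any cell $M\neq I_n(\xi)$ with $\tilde{\mu}_{\mathbf{p},\lambda}(M)>0$ necessarily branches off $\xi$'s coding at some $m\in J_1$ (otherwise the first discrepant digit violates the forced digit at position $m$); the analogue of the previous paragraph applied to $M$, using \eqref{eq:short-runs-r} and \eqref{eq:short-runs-1} to control $M$'s free-coordinate proportions (which differ from those of $\xi$ by only $o(1)$), yields $\tilde{\mu}_{\mathbf{p},\lambda}(M)=o(|M|^{s(\mathbf{p})-\eps})=o(\rho^{s(\mathbf{p})-\eps})$. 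Summing over the $O(1)$ relevant cells proves the lemma.

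The main obstacle is the neighbour analysis when $\xi$'s coding contains a long run of $r$'s at level $n$, which happens at $n=n_j$ where $L_n^+=l_j$: the immediate right Moran neighbour of $I_n(\xi)$ then branches at $m=n_j-l_j\in J_2$ and so carries zero measure, forcing one to iterate to the next neighbour at a lower branching level. The conjunction of \eqref{eq:short-runs-r}--\eqref{eq:short-runs-1} with the forced-digit pattern of $E_\lambda$ (in which long $J_3$-runs annihilate many candidate neighbours) guarantees that only finitely many neighbours with positive $\tilde{\mu}_{\mathbf{p},\lambda}$-measure accumulate in $B(\xi,\rho)$, and each of them satisfies the same local-dimension bound.
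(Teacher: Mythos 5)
Your overall plan mirrors the paper's: compute the measure and length of cylinders via the Bernoulli structure, obtain the cylinder estimate \eqref{eq:zero-limit-for-cylinders}, and then compare a ball to a small collection of basic intervals of comparable size, using the forced-digit constraints of $E_\lambda$ to kill most neighbours. Your cylinder computation is correct, including the observation that $r_n/N_n$ lies asymptotically in $[0,\tau]$ (though it needs the $J_1$/$J_3$ case split that the paper carries out at \eqref{eq:J3-diameter-limit}--\eqref{eq:J1-diameter-limit}), and your deduction of $\tilde\mu(I_n(\xi))=o(|I_n(\xi)|^{s(\mathbf{p})-\eps})$ from the liminf being $\geq s(\mathbf{p})$ is fine.

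The ball-to-cylinder passage, however, is where the real content lies and your argument remains a sketch. Two points of concern. First, you choose $n$ minimal with $|I_n(\xi)|\le\rho$ and then take a Moran covering at scale $\rho$; this produces up to $\lceil 2/a_{\min}\rceil+1$ cells, and you must verify the estimate for \emph{each} cell $M\neq I_n(\xi)$ with positive measure. The paper sidesteps this by choosing $n$ \emph{largest} with $\rho\le a_{\min}|I_n(\xi)|$, so that $I_n(\xi)$ is wider than $\rho$ and exactly \emph{one} adjacent interval $I_{n+m,j'}$ (whose length is chosen $\asymp\rho$) suffices to cover $[\xi,\xi+\rho)\setminus I_n(\xi)$. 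Second, and more importantly, the key quantitative step is only asserted. You claim $M$'s free-coordinate proportions differ from $\xi$'s ``by only $o(1)$'' and then invoke ``the analogue of the previous paragraph.'' This is precisely where the argument is delicate: the branching point is $m=n-L_n^+(\xi)$, and the trailing block of $M$ consists of roughly $L_n^+(\xi)\cdot\log a_r/\log a_1$ forced $1$'s. For the proportions to converge one needs $L_n^+(\xi)/N_n\to0$, which by \eqref{eq:L-limits} holds only along $n\in J_1$; and one must separately observe that when $n\notin J_1$ (or $n-l\notin J_1$), the adjacent interval has measure zero because its coding violates the forced-digit pattern at position $n$ (or $n-l$). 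Your last paragraph gestures at this but does not carry out the computation: in the paper this is done explicitly at \eqref{eq:measure-expansion}--\eqref{eq:p-asymptotics}, showing that the extra factor $p_1^{l+m}/p_r^l$ contributes $O(L_n^+)=o(N_n)$ to $\log\tilde\mu$, so the strict inequality \eqref{eq:negative-limsup} survives. Finally, the remark ``forcing one to iterate to the next neighbour at a lower branching level'' is not actually executed; you would need to argue, for every one of the $O(1)$ neighbouring cells, either that it has zero measure or that its log-measure ratio is $s(\mathbf{p})+o(1)$, and this would effectively reproduce the paper's calculation several times over. In short: right strategy, correct cylinder bound, but the ball bound is stated rather than proved, and the specific mechanism ($L_n^+=o(N_n)$ for $n\in J_1$, zero measure otherwise) needs to be made explicit.
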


\begin{proof}
Fix $\xi\in\pi(E_{\mathbf{p},\lambda})$ and $0<\eps<s(\mathbf{p})$. We first show that
\begin{equation}
\limsup_{n\to\infty} \frac{1}{N_n}\big(\log \tilde{\mu}_{\mathbf{p},\lambda}(I_n(\xi))-(s(\mathbf{p})-\eps)\log|I_n(\xi)|\big)<0.
\label{eq:negative-limsup}
\end{equation}
Note that subject to \eqref{eq:p-tau-constraint}, $s(\mathbf{p})$ can be written as
\begin{equation}
s(\mathbf{p}):=\frac{\sum p_k\log p_k}{\sum p_k\log a_k+\tau\log a_r}.
\label{eq:alternative-s-expression}
\end{equation}
It suffices to consider $n\in J_1\cup J_3$. For all $n$ we have $\tilde{\mu}_{\mathbf{p},\lambda}(I_n(\xi))=\prod_{k=1}^r p_k^{s_{n,k}'}$, and hence
\begin{equation}
\lim_{n\to\infty} \frac{\log \tilde{\mu}_{\mathbf{p},\lambda}(I_n(\xi))}{N_n}=\sum_{k=1}^r p_k\log p_k
\label{eq:measure-limit}
\end{equation}
by \eqref{eq:strong-law}. Suppose first that $n_j-l_j<n\leq n_j$. Then $N_n=n_j-\sum_{i=1}^{j}l_i-(2j-1)=n_j-l_j+o(l_j)$  by \eqref{eq:l-growth-property}, and
\begin{align*}
\log|I_n(\xi)|=\sum_{k=1}^r s_{n,k}\log a_k
&=\sum_{k=1}^r s_{n,k}'\log a_k+\left(\sum_{i=1}^{j}l_i+n-n_j\right)\log a_r+O(j)\\
&\geq \sum_{k=1}^r s_{n,k}'\log a_k+l_j\log a_r+o(l_j).
\end{align*}
Since $l_j/N_n\sim l_j/(n_j-l_j)\to \lambda/(1-\lambda)$ by \eqref{eq:L-proportion}, it follows that
\begin{equation}
\liminf_{n\to\infty,n\in J_3} \frac{\log|I_n(\xi)|}{N_n}\geq 
\sum_{k=1}^r p_k\log a_k+\frac{\lambda}{1-\lambda}\log a_r=\sum_{k=1}^r p_k\log a_k+\tau\log a_r.
\label{eq:J3-diameter-limit}
\end{equation}

Assume next that $n_{j-1}<n<n_j-l_j$. Then by \eqref{eq:l-growth-property},
\[
N_n=n-\sum_{i=1}^{j-1}l_i-2(j-1)>n_{j-1}-\sum_{i=1}^{j-1}l_i-2(j-1)=n_{j-1}-l_{j-1}+o(l_{j-1}),
\] 
and
\begin{align*}
\log|I_n(\xi)|&=\sum_{k=1}^r s_{n,k}'\log a_k+\left(\sum_{i=1}^{j-1}l_i\right)\log a_r+O(j)\\
&=\sum_{k=1}^r s_{n,k}'\log a_k+l_{j-1}\log a_r+o(l_{j-1}).
\end{align*}
Hence, 
\begin{align}
\begin{split}
\liminf_{n\to\infty,n\in J_1} \frac{\log|I_n(\xi)|}{N_n}&\geq \sum_{k=1}^r p_k\log a_k+\left(\lim_{j\to\infty}\frac{l_{j-1}}{n_{j-1}-l_{j-1}}\right)\log a_r\\
&=\sum_{k=1}^r p_k\log a_k+\tau\log a_r,
\end{split}
\label{eq:J1-diameter-limit}
\end{align}
again using \eqref{eq:L-proportion}. The derivation for $n\in J_2$ and $n\in J_4$ is essentially the same. Combining \eqref{eq:alternative-s-expression}-\eqref{eq:J1-diameter-limit} yields \eqref{eq:negative-limsup}. Since $N_n\to\infty$, it follows that
\begin{equation}
\limsup_{n\to\infty} \frac{\tilde{\mu}_{\mathbf{p},\lambda}(I_n(\xi))}{|I_n(\xi)|^{s(\mathbf{p})-\eps}}=0.
\label{eq:zero-limit-for-cylinders}
\end{equation}

Now consider an arbitrary open ball $B(\xi,\rho)=(\xi-\rho,\xi+\rho)$. Let $n$ be the largest integer such that
\begin{equation}
\rho\leq a_{\min}|I_n(\xi)|=a_{\min} a_{k_1}\dots a_{k_n}.
\label{eq:choice-of-n-2}
\end{equation} 
Then $\rho\asymp |I_n(\xi)|=a_{k_1}\dots a_{k_n}$. If $\xi+\rho\in I_n(\xi)$, then
\begin{equation}
\frac{\tilde{\mu}_{\mathbf{p},\lambda}([\xi,\xi+\rho))}{\rho^{s(\mathbf{p})-\eps}}\to 0
\label{eq:zero-limit-half-ball}
\end{equation}
by \eqref{eq:zero-limit-for-cylinders}. So suppose $\xi+\rho\not\in I_n(\xi)$. Let $l:=L_n^+(\xi)$. Note that $|I_n(\xi)|=a_{k_1}\cdots a_{k_{n-l}}a_r^l$. Now let $m$ be the largest integer such that
\begin{equation}
a_{k_1}\cdots a_{k_{n-l-1}}a_{k_{n-l}+1}a_1^{l+m}\geq \rho.
\label{eq:choice-of-m}
\end{equation}
Observe that $l+m\geq 0$ in view of \eqref{eq:choice-of-n-2}. Hence, there is an index $j'$ such that $|I_{n+m,j'}|=a_{k_1}\cdots a_{k_{n-l-1}}a_{k_{n-l}+1}a_1^{l+m}$, the interval $I_{n+m,j'}$ is adjacent to $I_n(\xi)$, and by \eqref{eq:choice-of-m}, $\xi+\rho\in I_{n+m,j'}$. If $n\not\in J_1$, then the coding $(k_1',k_2',\dots k_{n+m}')$ of $I_{n+m,j'}$ is not (a prefix of a sequence) in $E_\lambda$, so $\tilde{\mu}_{\mathbf{p},\lambda}(I_{n+m,j'})=0$. Consider $n\in J_1$. Then, assuming $n-l\in J_1$ as well,
\begin{equation}
\tilde{\mu}_{\mathbf{p},\lambda}(I_{n+m,j'})=\left(\prod_{k=1}^r p_k^{s_{n-l-1,k}'}\right)p_{k_{n-l}+1}p_1^{l+m}=\tilde{\mu}_{\mathbf{p},\lambda}(I_n(\xi))\cdot\frac{p_{k_{n-l}+1}}{p_{k_{n-l}}}\cdot\frac{p_1^{l+m}}{p_r^l}.
\label{eq:measure-expansion}
\end{equation}
(If $n-l\not\in J_1$, then $n-l\in J_4$ and hence $\tilde{\mu}_{\mathbf{p},\lambda}(I_{n+m,j'})=0$.)
Note that by the analog of \eqref{eq:d-power-asymp} we have
\begin{equation}
\frac{p_1^{l+m}}{p_r^l} \asymp \left(\frac{p_1^{\log a_r/\log a_1}}{p_r}\right)^l.
\label{eq:p-asymptotics}
\end{equation}
Moreover, as a consequence of \eqref{eq:l-growth-property}, $\liminf_{n\to\infty} N_n/n=1-\lambda>0$.
Using \eqref{eq:L-limits}, it follows that $L_n^+/N_n\to 0$, and hence (recalling that $l=L_n^+$), \eqref{eq:measure-expansion} and \eqref{eq:p-asymptotics} imply
\[
\limsup_{n\to\infty,n\in J_1} \frac{\log\tilde{\mu}_{\mathbf{p},\lambda}(I_{n+m,j'})}{N_n}=\limsup_{n\to\infty,n\in J_1} \frac{\log\tilde{\mu}_{\mathbf{p},\lambda}(I_n(\xi))}{N_n}.
\]
Now \eqref{eq:zero-limit-half-ball} follows as in the proof of \eqref{eq:zero-limit-for-cylinders}, since $[\xi,\xi+\rho)\subset I_n(\xi)\cup I_{n+m,j'}$. In the same way (considering $L_n^-$ instead of $L_n^+$), we can show that
\[
\frac{\tilde{\mu}_{\mathbf{p},\lambda}((\xi-\rho,\xi))}{\rho^{s(\mathbf{p})-\eps}}\to 0.
\]
Thus, the proof is complete.
\end{proof}

\begin{proof}[Proof of the lower bound in Theorem \ref{thm:main}]

(a) Assume first the hypotheses of Theorem \ref{thm:main}\,({\em a}). We need to show that $D(\alpha)\geq \beta^*(\alpha)$. But this follows easily from Proposition \ref{prop:duality}, since $E_\phi(\alpha)$ contains the set
\[
F_{\mathbf p}:=\{\xi\in(0,1): s_{n,k}(\xi)/n\to p_k, \quad k=1,2,\dots,r\}
\]
whenever $\mathbf{p}\in\Delta_r^0$ with $\sum_{k\in\II_+} p_k(\log|d_k|-\alpha\log a_k)=0$, and $\dim_H F_{\mathbf p}=H(p_1,\dots,p_r)$, a generalization of Eggleston's theorem \cite{Eggleston} due to Li and Dekking \cite{Li-Dekking}. After all, the existence of the frequencies $\lim_{n\to\infty}s_{n,k}/n$ means that $L_n^+/n\to 0$, so 
\[
\alpha_\phi^+(\xi)=\gamma_0(\xi)=\frac{\sum_{k\in\II_+}p_k\log|d_k|}{\sum_{k\in\II_+}p_k\log a_k}=\alpha,
\] 
and in the same way, $\alpha_\phi^-(\xi)=\gamma_0(\xi)$.

(b) Assume next the hypotheses of Theorem \ref{thm:main}\,({\em b}). Fix $\alpha>1$ (when $\alpha=1$ there is nothing to prove). By Proposition \ref{prop:duality-b} and by continuity, it suffices to show that $D(\alpha)\geq s(\mathbf{p})$ whenever $\mathbf{p}\in\Delta_r^0$ and $\sum p_k(\log|d_k|-\alpha\log a_k)<0$, where $s(\mathbf{p})$ was defined in Lemma \ref{lem:mass-upper-bound}. But when this last inequality holds, there is a number $\tau>0$ such that \eqref{eq:p-tau-constraint} holds, and we can find $\lambda\in(0,1)$ such that $\lambda/(1-\lambda)=\tau$. It thus follows from the three preceding Lemmas that
\[
\dim_H E_\phi(\alpha)\geq \dim_H \pi(E_{\mathbf{p},\lambda})\geq s(\mathbf{p}),
\]
as desired.
\end{proof}

\section{Proof of the upper bound} \label{sec:upper-bound}

In this section we prove the upper bounds in Theorem \ref{thm:main}\,({\em a}) and ({\em b}). For a short proof of the following elementary lemma, see \cite{Allaart}.

\begin{lemma} \label{lem:multinomial-estimate}
Let $n\in\NN$, and let $m_1,\dots,m_r$ be nonnegative integers with $\sum_{k=1}^r m_k=n$. Put $p_k:=m_k/n$ for $k=1,\dots,r$. Then
\begin{equation*}
\frac{n!}{m_1!\cdots m_r!}\leq 2\sqrt{n}\left(\prod_{k=1}^r p_k^{-p_k}\right)^n,
\end{equation*}
where we use the convention $0^0\equiv 1$.
\end{lemma}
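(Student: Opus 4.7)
The plan is to apply the multinomial theorem directly to the probability vector $(p_1,\ldots,p_r)$, which sums to $1$ by hypothesis. This approach produces the even stronger inequality $n!/(m_1!\cdots m_r!) \leq \left(\prod p_k^{-p_k}\right)^n$ (without the $2\sqrt{n}$), so no delicate estimation or Stirling approximation is required; the factor $2\sqrt{n}$ in the statement appears only as slack.

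The first step is to observe the identity
\[
\sum_{\substack{j_1,\ldots,j_r \geq 0 \\ j_1+\cdots+j_r = n}} \frac{n!}{j_1!\cdots j_r!}\, p_1^{j_1}\cdots p_r^{j_r} = (p_1+\cdots+p_r)^n = 1,
\]
which is just the multinomial theorem. Since each summand is nonnegative, singling out the summand with $(j_1,\ldots,j_r) = (m_1,\ldots,m_r)$ yields
\[
\frac{n!}{m_1!\cdots m_r!}\prod_{k=1}^r p_k^{m_k} \leq 1.
\]
The second step is a routine rearrangement: using $m_k = np_k$ gives
\[
\frac{n!}{m_1!\cdots m_r!} \leq \prod_{k=1}^r p_k^{-np_k} = \left(\prod_{k=1}^r p_k^{-p_k}\right)^n,
\]
from which the stated bound follows immediately since $1 \leq 2\sqrt{n}$ for every $n \in \NN$.

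There is no substantive obstacle. The only point requiring care is the convention $0^0 \equiv 1$: any index $k$ with $m_k = 0$ forces $p_k = 0$, and under the stated convention contributes a factor of $1$ both to $\prod p_k^{m_k}$ (in the multinomial expansion) and to $\prod p_k^{-p_k}$ (in the stated bound). Thus all sums and products may equivalently be restricted to indices where $m_k > 0$ without changing the argument.
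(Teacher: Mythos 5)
Your proof is correct, and it is cleaner than what the presence of the $2\sqrt{n}$ factor suggests the paper had in mind. The paper does not reproduce a proof here but simply refers to \cite{Allaart} for it; the factor $2\sqrt{n}$ is exactly the kind of slack that appears when one bounds the multinomial coefficient via Stirling's formula ($n! \sim \sqrt{2\pi n}\,(n/e)^n$), and that is almost certainly what the cited reference does. Your argument instead isolates a single term from the multinomial expansion of $(p_1+\cdots+p_r)^n = 1$, which yields the sharper bound $n!/(m_1!\cdots m_r!) \le \bigl(\prod_k p_k^{-p_k}\bigr)^n$ with no extraneous polynomial factor and no asymptotic estimates at all. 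The Stirling route is the more common one and also produces matching lower bounds (which this paper does not need), whereas your route is shorter, entirely elementary, and strictly stronger for the one-sided estimate required. Your remark about the $0^0$ convention correctly handles the degenerate indices. The only cosmetic point is that the lemma as stated asks only for the weaker inequality, so noting $1 \le 2\sqrt{n}$ at the end, as you do, is all that is needed to match the statement.
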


The next result is probably known. However, since the author could not find it in the literature, a proof is included for completeness.

\begin{lemma} \label{lem:extreme-run-length}
The set 
\[
R_1:=\left\{\xi\in(0,1): \limsup_{n\to\infty}\frac{L_n^+(\xi)}{n}=1\right\}
\]
has Hausdorff dimension zero.
\end{lemma}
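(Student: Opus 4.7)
The plan is to show $\mathcal{H}^s(R_1)=0$ for every $s>0$ by constructing, for each small $\delta>0$, an efficient cover of $R_1$ by basic intervals whose codings end in a very long run of the digit $r$.

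Fix $s>0$. The first step is to select $\delta=\delta(s)\in(0,1)$ small enough that
\[
\delta\log r + s(1-\delta)\log a_r < 0,
\]
which is possible because $\log a_r<0$ and the left hand side tends to $s\log a_r<0$ as $\delta\downarrow 0$. For each $n\in\NN$, let $\ell_n:=\lfloor(1-\delta)n\rfloor$ and define
\[
U_n := \bigcup\left\{I_{n,j}: \mbox{the coding $(k_1,\dots,k_n)$ of $I_{n,j}$ satisfies $k_{n-\ell_n+1}=\dots=k_n=r$}\right\}.
\]
By the definition of $L_n^+$, every $\xi\in R_1$ lies in $U_n$ for infinitely many $n$; hence $R_1\subset \bigcup_{n\geq N}U_n$ for every $N\in\NN$.

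The second step is to estimate $\sum_{I\subset U_n}|I|^s$. Since every basic interval in $U_n$ has length $a_{k_1}\cdots a_{k_{n-\ell_n}}\cdot a_r^{\ell_n}$, summing over the first $n-\ell_n$ coordinates and using $\sum_{k=1}^r a_k^s=:C_s$ we obtain
\[
\sum_{I\subset U_n}|I|^s = C_s^{\,n-\ell_n}\,a_r^{s\ell_n}.
\]
Since $C_s\leq r$ (as $0<a_k<1$) and $n-\ell_n\leq \delta n+1$, taking logarithms gives
\[
\log\!\sum_{I\subset U_n}|I|^s \leq (\delta n+1)\log r + s(1-\delta)n\log a_r+O(1)\leq -cn+O(1)
\]
for some $c>0$ by our choice of $\delta$. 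Hence $\sum_{I\subset U_n}|I|^s$ decays geometrically in $n$.

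For the final step, note that the diameter of every basic interval in $U_n$ is at most $a_r^{\ell_n}\to 0$, so for any $\eta>0$ we can choose $N$ so large that each $I\subset U_n$ with $n\geq N$ has $|I|<\eta$. Then
\[
\mathcal{H}^s_\eta(R_1)\leq \sum_{n\geq N}\sum_{I\subset U_n}|I|^s,
\]
and by the geometric decay above this upper bound tends to $0$ as $N\to\infty$. Therefore $\mathcal{H}^s(R_1)=0$. Since $s>0$ was arbitrary, $\dim_H R_1=0$. There is no serious obstacle; the only thing to be careful about is to choose $\delta$ small enough (as a function of $s$) that the entropy of the free coordinates is beaten by the shrinkage coming from the forced run of $r$'s.
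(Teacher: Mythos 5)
Your proof is correct. Both you and the paper cover $R_1$ by level-$n$ cylinders whose last $\sim(1-\delta)n$ digits are forced to equal $r$, and both establish geometric decay of $\sum_{I\subset U_n}|I|^s$; the difference lies entirely in how that sum is estimated. The paper groups cylinders by the digit-frequency vector $(m_1,\dots,m_r)$ of the free prefix, bounds the multinomial coefficient $\binom{\lceil\eps n\rceil}{m_1,\dots,m_r}$ via the Stirling-type inequality of Lemma \ref{lem:multinomial-estimate}, and then takes a maximum over $\mathbf{p}\in\Delta_r$ to obtain the constant $M=\max_{\mathbf{p}}\prod_k p_k^{-p_k}a_k^{p_k s}$. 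You instead observe that the multinomial theorem computes the sum over the free prefix \emph{exactly} as $\bigl(\sum_{k=1}^r a_k^s\bigr)^{n-\ell_n}$, which makes the Stirling bound unnecessary, and then you use the crude bound $\sum_k a_k^s\leq r$. (In fact a short Lagrange-multiplier computation shows $M=\sum_k a_k^s$, so you are estimating the identical quantity more directly.) Your route is shorter and more elementary; what the paper's heavier machinery buys is that Lemma \ref{lem:multinomial-estimate} and the constant $M$ are re-used in the subsequent Remark \ref{rem:large-l-small-dimension} and in the upper-bound proof of Section \ref{sec:upper-bound}, so the cost is amortized. One small thing worth saying explicitly: the estimate $n-\ell_n\leq\delta n+1$ and the choice of $\delta$ give a per-$n$ bound of the form $Ce^{-cn}$ with $c>0$, which is what justifies passing from $\mathcal{H}^s_\eta$ to $\mathcal{H}^s$ by letting $N\to\infty$; you have this right, just compressed.
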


\begin{proof}
Fix $s>0$ and let 
\[
M:=\max\left\{\prod_{k=1}^r p_k^{-p_k}a_k^{p_k s}: \mathbf{p}=(p_1,\dots,p_r)\in\Delta_r\right\}.
\] 
Choose $\eps>0$ small enough so that
\begin{equation}
M_\eps:=M^\eps a_r^{(1-\eps)s}<1.
\label{eq:less-than-one}
\end{equation}
This is possible since $a_r^s<1$. If $\xi\in R_1$, then for every $N\in\NN$ there is an $n\geq N$ such that $L_n^+(\xi)>(1-\eps)n$. For such $n$, and for any $r$-tuple of integers $(m_1,\dots,m_r)$ with $\sum_{k=1}^r m_k=\lceil \eps n\rceil$, there are $\binom{\lceil \eps n\rceil}{m_1,\dots,m_r}$ basic intervals of level $n$ satisfying $s_{n-\lceil \eps n\rceil,k}=m_k$, $k=1,\dots,r$, that could possibly contain $\xi$. Each of these intervals has length $\big(\prod_{k=1}^r a_k^{m_k}\big)a_r^{n-\lceil \eps n\rceil}$. Given $\eta>0$, choose $N$ so large that $a_{\max}^N<\eta$. Setting $p_k:=m_k/\lceil \eps n\rceil$, we then have
\begin{align*}
\mathcal{H}_\eta^s(R_1) &\leq \sum_{n=N}^\infty \sum_{m_1+\dots+m_r=\lceil \eps n\rceil} \binom{\lceil \eps n\rceil}{m_1,\dots,m_r} \left(\prod_{k=1}^r a_k^{m_k s}\right)a_r^{(n-\lceil \eps n\rceil)s}\\
&\leq 2\sum_{n=N}^\infty (\lceil \eps n\rceil+1)^r \sqrt{\lceil \eps n\rceil} \max_{\mathbf{p}\in\Delta_r}\left\{\left(\prod_{k=1}^r p_k^{-p_k}\right)^{\lceil \eps n\rceil}\left(\prod_{k=1}^r a_k^{p_k}\right)^{\lceil \eps n\rceil s}\right\}a_r^{(n-\lceil \eps n\rceil)s}\\
&\leq 2\sum_{n=N}^\infty (\eps n+2)^{r+1} \max_{\mathbf{p}\in\Delta_r}\left\{\left[\left(\prod_{k=1}^r p_k^{-p_k}a_k^{p_k s}\right)^\eps a_r^{(1-\eps)s}\right]^n \left(\prod_{k=1}^r p_k^{-p_k}\right)\right\}a_r^{-s}\\
&\leq 2\sum_{n=N}^\infty (\eps n+2)^{r+1} \max_{\mathbf{p}\in\Delta_r}\left(\prod_{k=1}^r p_k^{-p_k}\right)a_r^{-s}M_\eps^n\\
&\to 0 \qquad\mbox{as $N\to\infty$},
\end{align*}
where the second inequality follows from Lemma \ref{lem:multinomial-estimate}, and the last line follows from \eqref{eq:less-than-one}.
Hence, $\mathcal{H}^s(R_1)=0$, and since $s>0$ was arbitrary, the lemma follows.
\end{proof}

\begin{remark} \label{rem:large-l-small-dimension}
{\rm
Essentially the same argument shows this: For any $s>0$, there exists $\eps>0$ such that
\begin{equation}
\dim_H\left(\left\{\xi\in(0,1): \limsup_{n\to\infty}\frac{L_n^+(\xi)}{n}>1-\eps\right\}\right)<s.
\label{eq:large-l-small-dimension}
\end{equation}
}
\end{remark}

\begin{proof}[Proof of the upper bound in Theorem \ref{thm:main}]
Let $E_\phi^+(\alpha):=\{\xi: \alpha_\phi^+(\xi)=\alpha\}$ and likewise, $E_\phi^-(\alpha):=\{\xi: \alpha_\phi^-(\xi)=\alpha\}$. 
Since $\alpha_\phi(\xi)=\min\{\alpha_\phi^+(\xi),\alpha_\phi^-(\xi)\}$ for all but countably many $\xi$ by Corollary \ref{cor:minimum}, it follows that
\[
D(\alpha)=\dim_H E_\phi(\alpha)\leq \max\left\{\dim_H E_\phi^+(\alpha),\dim_H E_\phi^-(\alpha)\right\}.
\]
Thus, it suffices to estimate $\dim_H E_\phi^+(\alpha)$ and $\dim_H E_\phi^-(\alpha)$ separately. We focus on $\dim_H E_\phi^+(\alpha)$; the other term is dealt with similarly.

We can write $E_\phi^+(\alpha)=E_1(\alpha)\cup E_2(\alpha)$, where
\begin{align*}
E_1(\alpha):&=\{\xi: \alpha_\phi^+(\xi)=\min\{\gamma_0(\xi),\gamma_1(\xi)\}=\alpha\},\\
E_2(\alpha):&=\{\xi: \alpha_\phi^+(\xi)=\gamma_2(\xi)=\alpha<\min\{\gamma_0(\xi),\gamma_1(\xi)\}\}.
\end{align*}
It is proved in \cite[Section 7]{Allaart} that 
\[
\dim_H \{\xi:\min\{\gamma_0(\xi),\gamma_1(\xi)\}=\alpha\}\leq \beta^*(\alpha),
\]
so it remains to bound the dimension of $E_2(\alpha)$.
If $\alpha<1$, then $E_2(\alpha)=\emptyset$ by Lemma \ref{lem:gammas}, so we need only consider the case $\alpha\geq 1$. First, for $\alpha=1$, $E_2(\alpha)$ contains only points $\xi$ with $\limsup L_n^+(\xi)/n=1$ by the second statement of Lemma \ref{lem:gammas}, so $\dim_H E_2(1)=0$ by Lemma \ref{lem:extreme-run-length}. Assume then, for the remainder of the proof, that $\alpha>1$.

Let $h(\alpha)$ be defined as in Proposition \ref{prop:duality-b}:
\[
h(\alpha):=\max\left\{\frac{(\alpha-1)\sum p_k\log p_k}{\sum p_k(\log|d_k|-\log a_k)}: \mathbf{p}\in\Delta_r^0,\ \sum p_k(\log|d_k|-\alpha\log a_k)\leq 0\right\}.
\]
Note that $h(\alpha)$ is well defined even when $|d_k|\geq a_k$ for some $k$, since the constraint $\sum p_k(\log|d_k|-\alpha\log a_k)\leq 0$ implies 
\begin{equation}
\sum p_k(\log|d_k|-\log a_k)\leq (\alpha-1)\sum p_k\log a_k\leq (\alpha-1)\log a_{\max}<0,
\label{eq:log-difference-estimate}
\end{equation}
so the denominator is bounded away from zero. We will show that $\dim_H E_2(\alpha)\leq h(\alpha)$. 

Fix $s>h(\alpha)$. We can choose $\delta>0$ so small that $\alpha-\delta>1$ and $s>s_\delta$, where
\begin{align}
\begin{split}
s_\delta&:=\sup\left\{\frac{(\alpha+\delta-1)\sum p_k\log p_k}{\sum p_k(\log|d_k|-\log a_k)}: \mathbf{p}\in\Delta_r^0\right.\\
& \left.\phantom{\frac{(\alpha+\delta-1)\sum p_k\log p_k}{\sum p_k(\log|d_k|-\log a_k)}}\mbox{and}\quad \sum p_k(\log|d_k|-(\alpha-\delta)\log a_k)\leq 0\right\}.
\end{split}
\label{eq:delta-constrained-max}
\end{align}
In view of Remark \ref{rem:large-l-small-dimension}, we may choose $\eps>0$ so that \eqref{eq:large-l-small-dimension} holds with $2\eps$ in place of $\eps$. Hence, it is sufficient to show that $\dim_H E_2^\eps(\alpha)\leq s$, where
\[
E_2^\eps(\alpha):=\left\{\xi\in E_2(\alpha): \limsup_{n\to\infty} \frac{L_n^+(\xi)}{n}\leq 1-2\eps\right\}.
\]
Take a point $\xi\in E_2^\eps(\alpha)$. Write $l_n:=L_n^+(\xi)$ and $s_{n,k}:=s_{n,k}(\xi)$. Since $\gamma_2(\xi)=\alpha<\gamma_0(\xi)$, it follows that
\[
\liminf_{n\to\infty} \frac{\sum s_{n,k}\log|d_k|+K_2 l_n}{\sum s_{n,k}\log a_k}=\liminf_{n\to\infty} \frac{\sum s_{n-l_n,k}\log|d_k|+l_n\log a_r}{\sum s_{n-l_n,k}\log a_k+l_n\log a_r}=\alpha.
\]
So on the one hand, there is an integer $N$ such that
\begin{equation}
\sum s_{n-l_n,k}\log|d_k|+l_n\log a_r<(\alpha-\delta)\left(\sum s_{n-l_n,k}\log a_k+l_n\log a_r\right)
\label{eq:delta-upper}
\end{equation}
for all $n\geq N$; and on the other hand,
\begin{equation}
\sum s_{n-l_n,k}\log|d_k|+l_n\log a_r>(\alpha+\delta)\left(\sum s_{n-l_n,k}\log a_k+l_n\log a_r\right)
\label{eq:delta-lower}
\end{equation}
for infinitely many $n$. So for all $N\in\NN$, we can find $n\geq N$ such that \eqref{eq:delta-upper} and \eqref{eq:delta-lower} hold. Furthermore, $n$ can be chosen large enough so that $l_n<(1-\eps )n$.
Fix such an $n$, and set $l:=l_n$ and
\[
p_k:=\frac{s_{n-l,k}}{n-l}, \qquad k=1,\dots,r.
\]
Then $\mathbf{p}:=(p_1,\dots,p_r)\in\Delta_r^0$ (for else $\alpha_\phi(\xi)$ would equal $+\infty$), and \eqref{eq:delta-lower} can be rearranged to obtain
\begin{equation}
\sum p_k\big(\log|d_k|-(\alpha+\delta)\log a_k\big)>(\alpha+\delta-1)\frac{l}{n-l}\log a_r,
\label{eq:p-lower}
\end{equation}
while \eqref{eq:delta-upper} gives
\begin{equation}
\sum p_k\big(\log|d_k|-(\alpha-\delta)\log a_k\big)<(\alpha-\delta-1)\frac{l}{n-l}\log a_r\leq 0,
\label{eq:p-upper}
\end{equation}
since $\alpha-\delta-1>0$ and $\log a_r<0$. Note in particular that $\mathbf{p}$ satisfies the constraint in \eqref{eq:delta-constrained-max}. We can now calculate, using \eqref{eq:p-lower}, \eqref{eq:delta-constrained-max}, \eqref{eq:p-upper} and the obvious modification of \eqref{eq:log-difference-estimate}:
\begin{align*}
-\sum p_k\log p_k &+ s\left(\sum p_k\log a_k+\frac{l}{n-l}\log a_r\right)\\
&\leq -\sum p_k\log p_k+s(\alpha+\delta-1)^{-1}\sum p_k(\log|d_k|-\log a_k)\\
&\leq (s-s_\delta)(\alpha+\delta-1)^{-1}\sum p_k(\log|d_k|-\log a_k)\\
&\leq (s-s_\delta)(\alpha+\delta-1)^{-1}(\alpha-\delta-1)\log a_{\max}\\
&<0.
\end{align*}
Hence, there is a constant $C_\delta<1$ (not depending on $n$) such that
\begin{equation}
\left(\prod p_k^{-p_k}a_k^{p_k s}\right)a_r^{\left(\frac{l}{n-l}\right)s}\leq C_\delta.
\label{eq:away-from-one}
\end{equation}
Note that $\xi$ must lie in one of $\binom{n-l}{s_{n-l,1},\dots,s_{n-l,r}}$ basic intervals of level $n$, each of which has length
\[
\prod a_k^{s_{n,k}}=\left(\prod a_k^{s_{n-l,k}}\right)a_r^l=\left(\prod a_k^{p_k}\right)^{n-l}a_r^l.
\]
Given $\eta>0$, $N$ can be taken large enough so that $a_{\max}^N<\eta$, so all basic intervals of level $N$ or greater have diameter less than $\eta$. Let $\mathcal{M}_{n,l,\delta}$ be the set of all $r$-tuples $(m_1,\dots,m_r)$ for which $\sum m_k=n-l$, $\sum m_k\big(\log|d_k|-(\alpha-\delta)\log a_k\big)\leq 0$, and $\sum m_k\big(\log|d_k|-(\alpha+\delta)\log a_k\big)\geq (\alpha+\delta-1)l\log a_r$. Let $\mathcal{P}_{n,l,\delta}$ denote the set of corresponding probability vectors, i.e. those $\mathbf{p}\in\Delta_r^0$ satisfying \eqref{eq:p-lower} and the constraint in \eqref{eq:delta-constrained-max}.
Then
\begin{align*}
\mathcal{H}_\eta^s(E_2^\eps(\alpha)) &\leq \sum_{n=N}^\infty \sum_{l=0}^{\lfloor (1-\eps)n\rfloor} \sum_{(m_1,\dots,m_r)\in\mathcal{M}_{n,l,\delta}} \binom{n-l}{m_1,\dots,m_r}\left[\left(\prod a_k^{m_k}\right)a_r^l\right]^s\\
&\leq \sum_{n=N}^\infty \sum_{l=0}^{\lfloor (1-\eps)n\rfloor} (n+1)^r \sup_{(m_1,\dots,m_r)\in\mathcal{M}_{n,l,\delta}}\binom{n-l}{m_1,\dots,m_r}\left[\left(\prod a_k^{m_k}\right)a_r^l\right]^s\\
&\leq \sum_{n=N}^\infty \sum_{l=0}^{\lfloor (1-\eps)n\rfloor} (n+1)^r \cdot 2\sqrt{n-l} \sup_{\mathbf{p}\in\mathcal{P}_{n,l,\delta}} \left[\left(\prod p_k^{-p_k}a_k^{p_k s}\right)a_r^{\left(\frac{l}{n-l}\right)s}\right]^{n-l}\\
&\leq 2\sum_{n=N}^\infty \sum_{l=0}^{\lfloor (1-\eps)n\rfloor} (n+1)^{r+1} C_\delta^{n-l}\\
&\leq 2\sum_{n=N}^\infty (n+1)^{r+1} \frac{C_\delta^{\eps n-1}}{1-C_\delta}.
\end{align*}
Here, the third inequality follows from Lemma \ref{lem:multinomial-estimate} and the fourth from \eqref{eq:away-from-one}.
The final series converges; thus, letting $\eta\searrow 0$ (and hence $N\to\infty$), we obtain $\mathcal{H}^s(E_2^\eps(\alpha))=0$. Therefore, $\dim_H(E_2^\eps(\alpha))\leq s$, as was to be shown.

The upper bound in Theorem \ref{thm:main}\,({\em b}) now follows, using Proposition \ref{prop:duality-b}. To see the upper bound in Theorem \ref{thm:main}\,({\em a}), make the following two observations: First, if $|d_k|\geq a_k$ for some $k$, then one can check easily that the constrained maximum in the definition of $h(\alpha)$ is achieved on the boundary $\sum p_k(\log|d_k|-\alpha\log a_k)=0$, from which it follows that $h(\alpha)\leq \beta^*(\alpha)$ via Proposition \ref{prop:duality}. Second, if $\Lambda=\emptyset$, then there are no points $\xi$ for which $\gamma_2(\xi)<\gamma_0(\xi)$, so $E_2(\alpha)=\emptyset$. In both cases, we conclude that $D(\alpha)\leq\beta^*(\alpha)$.
\end{proof}

\section{Proof of differentiability of $D(\alpha)$} \label{sec:smooth-connection}

Assume in this section that $|d_k|<a_k$ for every $k$.

\begin{proposition}
The function $D(\alpha)$ is differentiable at $\alpha_0$.
\end{proposition}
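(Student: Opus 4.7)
The plan is to check that the left and right derivatives of $D(\alpha)$ at $\alpha_0$ coincide, using the Legendre transform structure of $\beta^*$.

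First, the left derivative is immediate: on $[1,\alpha_0]$ we have $D(\alpha)=\sigma(\alpha-1)$, a linear function with slope $\sigma$, so $D'_-(\alpha_0)=\sigma$. The entire task is therefore to show $(\beta^*)'_+(\alpha_0)=\sigma$.

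Next, I would invoke the standard duality for the Legendre transform. Because $\beta(q)$ is strictly decreasing, strictly convex, and smooth (as the implicit function defined by the analytic equation $\sum_{k\in\II_+}|d_k|^q a_k^{\beta(q)}=1$), at each $q$ the infimum in \eqref{eq:legendre-transforms} is attained at $\alpha=-\beta'(q)$, and $(\beta^*)'(\alpha)=q$ at that $\alpha$. So it suffices to identify the unique $q^*$ with $-\beta'(q^*)=\alpha_0$ and check that $q^*=\sigma$.

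For this I would compute $-\beta'(q)$ by implicit differentiation of $\sum_{k\in\II_+}|d_k|^q a_k^{\beta(q)}=1$, obtaining
\[
-\beta'(q)=\frac{\sum_{k\in\II_+}|d_k|^q a_k^{\beta(q)}\log|d_k|}{\sum_{k\in\II_+}|d_k|^q a_k^{\beta(q)}\log a_k}.
\]
Now plug in $q=\sigma$. The key observation is that $\beta(\sigma)=-\sigma$: indeed, $\sum_{k\in\II_+}|d_k|^\sigma a_k^{-\sigma}=\sum_{k\in\II_+}(|d_k|/a_k)^\sigma=1$, which is precisely the defining equation of $\beta$ at $q=\sigma$ with value $-\sigma$. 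Consequently $|d_k|^\sigma a_k^{\beta(\sigma)}=(|d_k|/a_k)^\sigma=p_k^*$, and substituting gives
\[
-\beta'(\sigma)=\frac{\sum_{k\in\II_+}p_k^*\log|d_k|}{\sum_{k\in\II_+}p_k^*\log a_k}=\alpha_0,
\]
by the definition of $\alpha_0$ in Theorem~\ref{thm:main}\,(\emph{b}). Hence $(\beta^*)'(\alpha_0)=\sigma$, matching the left derivative, so $D$ is differentiable at $\alpha_0$ with $D'(\alpha_0)=\sigma$.

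The only mild subtlety is ensuring that the conjugacy formula $(\beta^*)'(\alpha)=q$ applies at exactly the endpoint $\alpha_0$; this is automatic because $\beta$ is strictly convex and smooth on all of $\RR$, so $\alpha_0=-\beta'(\sigma)$ lies in the relative interior of $[\alpha_{\min},\alpha_{\max}]$ (as already noted, $\alpha_0\leq\hat{\alpha}<\alpha_{\max}$ and $\alpha_0>1\geq\alpha_{\min}$ under the hypothesis $|d_k|<a_k$), and there is no ambiguity in the subgradient. I do not expect any real obstacle; the computation is essentially an identity once one recognizes that the weights $p_k^*$ defining $\alpha_0$ are exactly the normalized Gibbs weights $|d_k|^\sigma a_k^{\beta(\sigma)}$ appearing in $-\beta'(\sigma)$.
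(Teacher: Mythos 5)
Your argument is correct and follows the same route as the paper: observe that $\beta(\sigma)=-\sigma$, differentiate the scaling equation \eqref{eq:scaling-equation} implicitly, use the Legendre duality $({\beta^*})'(\alpha)=q^*(\alpha)$ together with $\beta'(q^*(\alpha))=-\alpha$, and conclude $q^*(\alpha_0)=\sigma$. The paper states the continuity of $D$ at $\alpha_0$ as a separate sentence ($\beta^*(\alpha_0)=\alpha_0\sigma+\beta(\sigma)=\sigma(\alpha_0-1)$), whereas you leave it implicit, but that value is a byproduct of your computation, so nothing is missing.

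One small slip in the technical aside: you justify that $\alpha_0$ is interior by writing ``$\alpha_0>1\geq\alpha_{\min}$ under the hypothesis $|d_k|<a_k$.'' In fact, $|d_k|<a_k$ for all $k$ forces $\rho_k=\log|d_k|/\log a_k>1$ for every $k\in\II_+$, so $\alpha_{\min}>1$, not $\alpha_{\min}\leq 1$. The correct justification is simply that
\[
\alpha_0=\sum_{k\in\II_+}\left(\frac{p_k^*\log a_k}{\sum_{j\in\II_+}p_j^*\log a_j}\right)\rho_k
\]
is a convex combination of the $\rho_k$'s with strictly positive weights, hence lies strictly between $\alpha_{\min}$ and $\alpha_{\max}$ whenever these differ (and when they coincide, the $\beta^*$-branch degenerates to a single point and the statement is vacuous). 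This does not affect the validity of your main computation.
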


\begin{proof}
Recall that $\sigma$ is the unique positive number such that $\sum_{k=1}^r (|d_k|/a_k)^\sigma=1$ and that $p_k^*=(|d_k|/a_k)^\sigma$ for $k=1,\dots,r$. Since $\beta(q)$ solves \eqref{eq:scaling-equation}, it follows immediately that $\beta(\sigma)=-\sigma$.

Let $q^*(\alpha)$ satisfy $\beta^*(\alpha)=\alpha q^*(\alpha)+\beta(q^*(\alpha))$. Then $\beta'(q^*(\alpha))=-\alpha$. Differentiating \eqref{eq:scaling-equation} with respect to $q$ then gives
\[
\sum_{i\in\II^+}|d_k|^q a_k^{\beta(q)}\left(\log|d_k|-\alpha\log a_k\right)=0.
\]
When $\alpha=\alpha_0$, this equation is satisfied by $q=\sigma$. Since the infimum in \eqref{eq:legendre-transforms} is uniquely attained, it follows that $q^*(\alpha_0)=\sigma$, and hence, $\beta^*(\alpha_0)=\alpha_0\sigma+\beta(\sigma)=\sigma(\alpha_0-1)$. Thus, $D(\alpha)$ is continuous at $\alpha_0$.

Next, observe that
\[
{\beta^*}'(\alpha)={\alpha q^*}'(\alpha)+q^*(\alpha)+\beta'(q^*(\alpha)){q^*}'(\alpha)=q^*(\alpha),
\]
and so ${\beta^*}'(\alpha_0)=q^*(\alpha_0)=\sigma$. Therefore, $D(\alpha)$ is differentiable at $\alpha_0$.
\end{proof}

We now briefly address the remaining statements of Theorem \ref{thm:main}. First, statements ({\em a})\,(i) and ({\em b})\,(i) follow from Corollary \ref{cor:alpha-range}, Corollary \ref{cor:minimum} and Remark \ref{rem:cut-points}. Statements ({\em a})\,(ii) and ({\em b})\,(ii) are clear. Statement ({\em a})\,(v) and its counterpart in ({\em b})\,(vi) are straightforward; see \cite{Allaart}. That $\alpha_0\leq\hat{\alpha}$ now follows from the differentiability of $D(\alpha)$, which implies ${\beta^*}'(\alpha_0)\geq 0$. Finally, statements ({\em a})\,(iv) and ({\em b})\,(v) are proved exactly as in \cite{Allaart}.

\section{Higher dimensional self-affine functions} \label{sec:higher-dim}

In \cite{Allaart}, the present author studied the pointwise H\"older spectrum of certain self-affine functions $f: [0,1]\to\RR^d$. In terms of the notation in the present article, they can be described as follows: Let $0=x_0<x_1<x_2<\dots<x_r=1$ be a partition of $[0,1]$, and let $(y_k)_{k=0}^r$ be points in $\RR^d$ such that $y_0=\mathbf{0}:=(0,0,\dots,0)$, $y_r=\mathbf{e_1}:=(1,0,\dots,0)$, and $|y_k-y_{k-1}|<1$ for $k=1,2,\dots,r$. For each $k\in\{1,\dots,r\}$, let $S_k(x)=a_k x+b_k$ and let $\Psi_k:\RR^d\to\RR^d$ be a contractive similarity such that the maps
\[
T_k(x,y):=(S_k(x),\Psi_k(y)),  \qquad k=1,2,\dots,r
\]
satisfy the connectivity conditions
\[
T_k(\{(0,\mathbf{0}),(1,\mathbf{e_1})\})=\{(x_{k-1},y_{k-1}),(x_k,y_k)\}, \qquad k=1,2,\dots,r.
\]
There is then a unique continuous function $f:[0,1]\to\RR^d$ such that
\begin{equation}
f(a_k x+b_k)=\Psi_k(f(x)), \qquad k=1,2,\dots,r, \quad x\in[0,1].
\label{eq:higher-dimensional-self-affine}
\end{equation}
Let $d_k:=|y_k-y_{k-1}|$, so $d_k$ is the Lipschitz constant (or contraction ratio) of $\Psi_k$. Note that in this setup, $d_k\geq 0$ for every $k$. Moreover, since the maps $T_k$ factor we have necessarily that $\sum_k d_k\geq 1=\sum |a_k|$. Thus, a situation as in Theorem \ref{thm:main}\,({\em b}) cannot occur for these functions.

In \cite{Allaart}, two types of H\"older exponent are considered: One is the pointwise H\"older exponent $\alpha_f(\xi)$ defined in the Introduction of the present article; the other is
\[
\tilde{\alpha}_f(\xi):=\sup\left\{\alpha\geq 0: \limsup_{x\to\xi}\frac{|f(x)-f(\xi)|}{|x-\xi|^\alpha}=0\right\}.
\]
Clearly, $\alpha_f(\xi)\geq \tilde{\alpha}_f(\xi)$, but {\em a priori} these exponents need not be equal. One of the main results of \cite{Allaart} is that
\begin{equation}
\dim_H \{\xi\in(0,1): \tilde{\alpha}_f(\xi)=\alpha\}=\beta^*(\alpha), \qquad \alpha\in(\alpha_{\min},\alpha_{\max}),
\label{eq:higher-dim-spectrum}
\end{equation}
with $\alpha_{\min}$, $\alpha_{\max}$ and $\beta^*(\alpha)$ defined as in Section \ref{sec:intro} (see \cite[Theorem 2.7]{Allaart}). As in the present paper, the proof involves an exact expression for $\tilde{\alpha}_f(\xi)$, which can be reformulated in terms of our notation from Section \ref{sec:exact-Holder-exponent} as
\begin{equation}
\tilde{\alpha}_f(\xi)=\gamma_1(\xi),
\label{eq:higher-dim-exponent}
\end{equation}
under the simplifying assumption that $a_k>0$ for each $k$ and the constant $K_1$ from Section \ref{sec:exact-Holder-exponent} is positive. (When $a_k<0$ for one or more $k$'s, the expression is more complicated; see \cite[Theorem 6.1]{Allaart}. When $K_1<0$, one interchanges the roles of the digits $1$ and $r$.) Using the method of divided differences that was employed in the proof of Theorem \ref{thm:exact-Holder-exponent} (based on Dubuc's lemma), it is straightforward to ``upgrade" the proof of \cite[Theorem 6.1]{Allaart} and show that \eqref{eq:higher-dim-exponent} (and hence \eqref{eq:higher-dim-spectrum}) holds with $\alpha_f(\xi)$ in place of $\tilde{\alpha}_f(\xi)$. Even in the case when $a_k<0$ for some $k$, this still works. 

In \cite[Theorem 2.5]{Allaart}, it is shown that $\alpha_f(\xi)=\tilde{\alpha}_f(\xi)$ for the special case when $a_1=\dots=a_r=1/r$. We can now conclude that $\alpha_f(\xi)=\tilde{\alpha}_f(\xi)$ for {\em any} function $f$ of the form \eqref{eq:higher-dimensional-self-affine}. (This fails in general for the self-affine functions $\phi$ from \eqref{eq:functional-equation}, which can have a nonzero finite derivative as shown in Theorem \ref{thm:exact-Holder-exponent}.)

\section*{Acknowledgments}
This paper could not have been written without the prior publication of Dubuc's article \cite{Dubuc}. I wish to thank Prof.~Serge Dubuc for encouraging me to use his methods (especially, the divided difference technique and Lemma \ref{lem:oscillation-bound}) in order to extend his results.


\end{document}